\documentclass[12pt,oneside,reqno]{amsart}
\usepackage[text={6.5in,9.5in},centering]{geometry}
\usepackage{graphicx,url,subfig}
\RequirePackage[OT1]{fontenc}
\RequirePackage{amsthm,amsmath,amssymb,amscd}
\RequirePackage[colorlinks,citecolor=blue,urlcolor=blue]{hyperref}
\usepackage{xcolor}
\usepackage{enumerate}
\usepackage{datetime}
\usepackage[normalem]{ulem} 
\usepackage{soul} 
\makeatother
\numberwithin{equation}{section}
\allowdisplaybreaks
\usepackage{dsfont}
\usepackage{mathtools}

\usepackage{amsthm}

\theoremstyle{definition}

\theoremstyle{plain}
\newtheorem{theorem}{Theorem}[section]

\newtheorem{lemma}[theorem]{Lemma}
\newtheorem{proposition}[theorem]{Proposition}

\theoremstyle{definition}

\newtheorem{assumption}[theorem]{Assumption}

\newtheorem{remark}[theorem]{Remark}

\newcommand{\E}{\mathbb{E}}
\renewcommand{\P}{\mathbb{P}}

\newcommand{\ud}{\ensuremath{\mathrm{d} }}

\newcommand{\Norm}[1]{\left\|  #1 \right\|}

\newcommand{\bbN}{\mathbb{N}}
\newcommand{\bbZ}{\mathbb{Z}}

\newcommand{\R}{\mathbb{R}}

\newcommand{\lr}[1]{\left(  #1 \right)}

\newcommand{\red}[1]{\textcolor{red}{#1}}
\newcommand{\blue}[1]{\textcolor{blue}{#1}}

\usepackage{amsrefs}

\renewcommand{\MR}[1]{~\href{https://mathscinet.ams.org/mathscinet-getitem?mr=MR#1}{MR#1}.}

\BibSpec{article}{%
    +{}  {\PrintAuthors}                {author}
    +{,} { \textit}                     {title}
    +{.} { }                            {part}
    +{:} { \textit}                     {subtitle}
    +{,} { \PrintContributions}         {contribution}
    +{.} { \PrintPartials}              {partial}
    +{,} { }                            {journal}
    +{}  { \textbf}                     {volume}
    +{}  { \PrintDatePV}                {date}
    +{,} { \issuetext}                  {number}
    +{,} { \eprintpages}                {pages}
    +{,} { }                            {status}
    +{,} { \url}                        {url}
    +{,} { \PrintDOI}                   {doi}
    +{,} { available at \eprint}        {eprint}
    +{}  { \parenthesize}               {language}
    +{}  { \PrintTranslation}           {translation}
    +{;} { \PrintReprint}               {reprint}
    +{.} { }                            {note}
    +{.} {}                             {transition}
    +{}  {\SentenceSpace \PrintReviews} {review}
}
\title[3D Stochastic Wave Equation with non-Lipschitz coefficients]{
Three-dimensional stochastic wave equation with non-Lipschitz coefficients}
\author{Jingyu Huang$^{1,\dagger}$}
\author{Wenxuan Tao$^{1,\ast}$}

\address{$^{1}$ School of Mathematics, University of Birmingham,
Birmingham,
B15 2TT,
United Kingdom}
\email{$^{\dagger}$ j.huang.4@bham.ac.uk}
\email{$^{\ast}$ wxt399@student.bham.ac.uk}
 \date{\today}
\begin{document}
\begin{abstract}
    We consider the three-dimensional stochastic wave equation (SWE) driven by a multiplicative Gaussian noise that is white in time and colored in space:  
\[
    \frac{\partial^2 u}{\partial t^2}
    = \Delta u
    + b\bigl(u\bigr)
    + \sigma\bigl(u\bigr)\,\dot{W},
\]
where the drift function \( b \) and diffusion coefficient \( \sigma \) are assumed to be locally Lipschitz and exhibit logarithmic superlinear growth at infinity. We establish the existence and uniqueness of a global mild solution on any fixed time interval \([0,T]\) under suitable assumptions on the spatial covariance function \( f \) of the noise \(\dot W(t,x)\). Our results apply, for example, to the case
\[
b(u) = u (\log_+ u)^{\theta_1}
\quad \text{and} \quad
\sigma(u) = u (\log_+ u)^{\theta_2},
\]
with parameters \(\theta_1 \in (0,2)\) and \(\theta_2 \in \bigl(0, \tfrac{\bar{\nu}+1}{2}\bigr)\), and $\log_+(z)=\log(z\vee e)$, where \(\bar{\nu}\) is determined by the assumptions on \( f \).

\end{abstract}
\maketitle
\section{Introduction}
In this paper, we study the three-dimensional stochastic wave equation driven by a multiplicative noise $\dot W$ that is white in time and correlated in space:
\begin{equation}\label{E:SWE}
    \left\{
        \begin{aligned}
            &\frac{\partial^2 u}{\partial t^2}(t,x)=\Delta u(t,x)+b\lr{u(t,x)}+\sigma\left(u(t,x)\right)\dot{W}(t,x)\,,\\
            &u(0,x)=u_0(x)\,,\quad x\in \R^3\,,\\
            &\frac{\partial u}{\partial t}(0,x)=v_0(x)\,,\quad x\in \R^3\,,
        \end{aligned}
    \right.
\end{equation}
starting from bounded initial displacement $u_0$ and initial velocity $v_0$. The drift term $b$ and diffusion coefficient $\sigma$ are assumed to be locally Lipschitz and may have faster than linear growth when $u$ is large. $\dot{W}$ is a centered Gaussian noise with covariance structure
\begin{align}\label{E:CovStructure}
    \E\left[\dot{W}(t,x)\dot{W}(s,y)\right]=\delta(t-s)f(x-y)\,,
\end{align}
where $f$ is a nonnegative and nonnegative-definite function with certain assumptions to be specified (see Assumption~\ref{A:CovAssumption}). 
Under the assumption
\begin{equation}\label{E:BasicAssump}
    \int_{|z|\le 1}\frac{f(z)}{|z|}\ud z<\infty\,,
\end{equation}
there exists a unique mild solution to \eqref{E:SWE} when both $b$ and $\sigma$ are globally Lipschitz; see, for example, \cite{dalang.quer-sardanyons:11:stochastic,peszat.zabczyk:00:nonlinear, peszat:02:cauchy}.


If $b$ and/or $\sigma$ are not globally Lipschitz in an SPDE, then blow-up can occur and a global solution may fail to exist, while a global solution exists when blow-up does not occur. For example, in the case of stochastic heat equations (SHEs)
\begin{equation}\label{E: SHE}
    \frac{\partial u}{\partial t} = \frac{1}{2} \frac{\partial^2 u}{\partial x^2} + b(u) + \sigma(u)\dot{W}\,.
  \end{equation}
with superlinear diffusion coefficients but without a drift term, Mueller~\cite{mueller:98:long-time, mueller:00:critical, mueller.sowers:93:blowup} studied the blow-up phenomena for~\eqref{E: SHE} when the diffusion coefficient $\sigma(u)$ behaves like $u^{\gamma}$. He showed that when $\gamma<\frac{3}{2}$, there is a global solution, while for $\gamma>\frac{3}{2}$, finite-time blow-up occurs with positive probability. The critical case $\gamma=\frac{3}{2}$ was recently resolved by Salins~\cite{salins:25:solutions}. 

In the last few years, there has been extensive research on SPDEs with non-Lipschitz drift and diffusion coefficients. Most of these works concern SHEs with locally Lipschitz coefficients; see \cite{fernandez-bonder.groisman:09:time-space}, \cite{dalang.khoshnevisan.ea:19:global}, 
\cite{chen.foondun.ea:25:global}, 
\cite{salins:26:global, salins:22:existence, salins:22:global}, 
\cite{shang.zhang:21:global}. For an SHE with non-locally Lipschitz coefficients, we refer to the recent paper \cite{chen.huang.ea:25:stochastic}.

Unlike stochastic heat equations, less is known for stochastic wave equations (SWEs), due to the more involved structure of the fundamental solutions of wave equations. Mueller~\cite{mueller:97:long} proved that global solutions still exist in certain non-Lipschitz regimes. In particular, he established the existence and uniqueness of global mild solutions to the stochastic wave equations on $\R$ and $\R^2$ under the assumptions
\begin{align}\label{E:MuellerAssu}
    b\equiv0,\quad |\sigma(u)|\le c\lr{|u|+1}\log\lr{|u|+2}^{\alpha},\quad\text{$\sigma$ is locally Lipschitz}\,,
\end{align}
with $\alpha\in (0,1/2)$. See also \cite{foondun.nualart:22:non-existence} for a finite-time blow-up result in the one-dimensional case. 


In this paper, we use the methodology in \cite{mueller:97:long} to study the three-dimensional case. The main difficulty is that the three-dimensional wave kernel (see~\eqref{E:Kernel} below) is a measure supported on a sphere. We will apply the approach developed in~\cite{hu.huang.ea:14:on} (which is inspired by \cite{dalang.sanz-sole:09:holder-sobolev}) to handle this difficulty. 

We now introduce the assumptions on the spatial covariance function $f$, which are slight modifications of those imposed in~\cite{hu.huang.ea:14:on}.
\begin{assumption}\label{A:CovAssumption}
There exist $\nu,\nu_1,\nu_2\in(0,\infty)$, $\gamma_1,\gamma_2,\mu_1,\mu_2\in (0,1]$, and a constant $C>0$ such that, for all $t\in [0,T]$, $w\in\R^3$ with $|w|\le 1$, and $h\in[0,1]$, the following hold:
    \begin{align}
        &\int_{|z|\le t}\frac{f(z)}{|z|}\ud z \le Ct^{\nu}\,, \label{E:ZeroOrderDifference}\\
        &\int_{|z|\le t}\frac{|f(z+w)-f(z)|}{|z|}\ud z \le C|w|^{\gamma_1}\,t^{\nu_1}\,, \label{E:FirstOrderDifference}\\
       & \int_{|z|\le t}\frac{|2f(z)-f(z+w)-f(z-w)|}{|z|}\ud z \le C|w|^{2\gamma_2}\,t^{\nu_2}\,,  \label{E:SecondOrderDifference}
    \end{align}
    \begin{align}
        &\int_0^T\int_{S^2\times S^2} s\left|f\lr{(s+h)\xi-(s+h)\eta}-f\lr{(s\xi-(s+h)\eta)}\right|A(\ud \xi )A(\ud \eta)\ud s\le Ch^{\mu_1}\,, \label{E:FirstOrderDifference_2}\\
     \notag&\int_0^T\int_{S^2\times S^2}s^2\bigg|f\lr{(s+h)\xi+(s+h)\eta}-f\lr{s\xi+(s+h)\eta}\\
    &\qquad\qquad\qquad -f\lr{(s+h)\xi+s\eta}+f(s\xi+s\eta)\bigg|A(\ud \xi)A(\ud \eta)\ud s\le Ch^{2\mu_2}\,, \label{E:SecondOrderDifference_2}
    \end{align}
    where $A(\ud \xi)$ and $A(\ud \eta)$ denote the uniform surface measure on the unit sphere $S^2\subset \R^3$.
\end{assumption}
As it turns out, these assumptions are satisfied by a variety of important covariance functions, such as the Riesz kernel and the Bessel kernel (see Section~\ref{S:Examples}). Similar to the assumptions~\eqref{E:MuellerAssu} in the one- or two-dimensional cases, we introduce the logarithmic correction condition at infinity for both the drift term and the diffusion coefficient. {The growth rates in the time variable $t$ shown in~\eqref{E:ZeroOrderDifference}--\eqref{E:SecondOrderDifference} determine the growth of moments of the solution and the spatial and temporal increments of the solution. This, in turn, is crucial to the H\"{o}lder coefficients in Proposition~\ref{P:SpatInc} and~\ref{P:TemptInc} which allows the existence of global solution under logarithmic correction condition.} 
We now state our main theorem. 
\begin{theorem}\label{T:MainTheorem}
    Assume $u_0\in C^2\lr{\R^3}$. Suppose $u_0$, $\nabla u_0$, and $v_0$ are bounded and let $u_0$, $\Delta u_0$, and $v_0$ be H\"{o}lder continuous with exponents $\alpha_1$, $\alpha_2$, and $\alpha_3$, respectively. We also assume Assumption \ref{A:CovAssumption} and that $b$ and $\sigma$ are locally Lipschitz with the following growth condition.
    \begin{equation}
        b(z)=O\lr{|z|\lr{\log |z|}^{\theta_1}}\quad\text{and}\quad \sigma(z)=O\lr{|z|\lr{\log |z|}^{\theta_2}}\quad\text{as $|z|\to \infty$}\,,
    \end{equation}
    with $0<\theta_1<  2$, $0<\theta_2<\frac{\bar{\nu}+1}{2}$ and $\bar{\nu}=\min\{\nu,\nu_1,\nu_2\}$, where $\nu,\nu_1,\nu_2$ are given by Assumption~\ref{A:CovAssumption}. Then there exists a unique global solution $u$ to~\eqref{E:SWE}.
\end{theorem}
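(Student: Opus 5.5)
The plan is to follow Mueller's truncation-and-stopping-time strategy from \cite{mueller:97:long}, adapted to the three-dimensional wave kernel through the Hölder estimates of \cite{hu.huang.ea:14:on}.

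\textbf{Truncation.} For each $N\ge 1$, let $b_N$ and $\sigma_N$ be globally Lipschitz functions that agree with $b$ and $\sigma$ on $[-e^N,e^N]$ and are constant outside this interval. Since globally Lipschitz coefficients are well posed under \eqref{E:BasicAssump}, the truncated equation has a unique mild solution $u_N$. Define
\[
\tau_N=\inf\{t\ge 0:\ \sup_{x\in\R^3}|u_N(t,x)|\ge e^N\}.
\]
By uniqueness for Lipschitz coefficients, $u_N=u_M$ on $[0,\tau_N\wedge\tau_M]$. So $u:=u_N$ on $[0,\tau_N]$ is consistent, and it is a local solution that is unique up to $\tau_N$. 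The theorem therefore reduces to showing $\P(\tau_N\le T)\to 0$ as $N\to\infty$.

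A first difficulty is that $\sup_{x\in\R^3}$ over an unbounded domain may be infinite. To avoid this, I would use finite speed of propagation: work on the cone above a large ball, or equivalently take the supremum over $|x|\le R$ and note that the constants do not depend on the location of the ball, since $u_0,v_0$ are bounded and the noise is stationary.

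\textbf{Moment and increment bounds.} Next I would show the bound is \emph{never reached at the first doubling-type step}. Split time into short intervals of length $\delta_N$, chosen depending on $N$. On $[0,\tau_N]$ the Lipschitz constants of $b_N,\sigma_N$ are of order $N^{\theta_1}$ and $N^{\theta_2}$, and $|u|\le e^N$. Suppose that at the start $t_k$ of an interval we have $\sup_x|u(t_k,\cdot)|\le e^{N-1}$. I want the probability of reaching $e^N$ before $t_k+\delta_N$ to be tiny.

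For the drift, $|b(u)|\le C e^N N^{\theta_1}$, so over time $\delta$ its contribution to $u$ through the wave kernel is at most $C\delta^2 e^N N^{\theta_1}$. This is small compared with $e^N$ if $\delta\ll N^{-\theta_1/2}$. The number of intervals is then $T/\delta\approx N^{\theta_1/2}$, and the growth must be absorbed over that many steps. This is where $\theta_1<2$ enters: the total factor $(1+C\delta^2N^{\theta_1})^{T/\delta}$ is controlled when $\delta N^{\theta_1}$ stays bounded, that is $N^{\theta_1/2}\ll N$.

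For the stochastic term on one interval, I would use Proposition~\ref{P:SpatInc} and Proposition~\ref{P:TemptInc} to get $p$-th moment bounds for spatial and temporal increments of the stochastic convolution. In these bounds the time growth $t^{\nu}$, $t^{\nu_1}$, $t^{\nu_2}$ from \eqref{E:ZeroOrderDifference}--\eqref{E:SecondOrderDifference} produces a variance factor of order $\delta^{\bar\nu+1}e^{2N}N^{2\theta_2}$. Applying Garsia--Rodemich--Rumsey/Kolmogorov together with Gaussian-type (BDG, with $p\sim N$) tail bounds gives
\[
\P\Bigl(\sup_{\text{interval}\times\text{ball}}|\text{stoch. part}|>e^{N-1}/4\Bigr)\le \exp\bigl(-c\,\delta^{-(\bar\nu+1)}N^{-2\theta_2}\bigr)
\]
up to polynomial factors. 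Summing over the roughly $N^{\theta_1/2}$ intervals and a polynomial number of spatial cells, a choice such as $\delta=N^{-\beta}$ with $\beta(\bar\nu+1)>2\theta_2$ and, in addition, a gain of a factor $N$ in the exponent makes the sum tend to $0$. Balancing this against the drift constraint is possible exactly when $2\theta_2<\bar\nu+1$.

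\textbf{Main obstacle and conclusion.} The main obstacle is the bookkeeping that keeps the level from creeping upward step by step. Each interval should raise the level by at most a factor of order $1+o(1/\text{number of steps})$, or at most by additive levels $e^{N-1}\to e^N$ in total. I would handle this, as Mueller does, with a chain of levels $e^{N-1}\cdot(1+1/m)^k$ and Borel--Cantelli-type summation. I would also control the deterministic part $\partial_t G*u_0+G*v_0$ using the $C^2$ and Hölder assumptions on $u_0,\Delta u_0,v_0$. Concluding $\P(\tau_N\le T)\to 0$ gives $\tau_\infty:=\lim_N\tau_N>T$ almost surely, and hence global existence. Uniqueness follows because any two solutions coincide up to each $\tau_N$ by the Lipschitz uniqueness.
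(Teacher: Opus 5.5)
Your truncation, cone localization and the uniqueness-by-consistency step match the paper. There is, however, a genuine gap in the core probabilistic step.

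\textbf{The restart is not valid.} Your scheme restarts at each $t_k$: you condition on $\sup_x|u(t_k,\cdot)|\le e^{N-1}$ and then estimate only what drift and noise do on $[t_k,t_k+\delta]$, using the a.s.\ bound $|u|\le e^N$. The wave equation has no such restart property. Its state at $t_k$ is $(u,\partial_t u)(t_k)$, and for $t>t_k$ the difference $u(t,x)-u(t_k,x)$ also contains the memory terms $\int_0^{t_k}\int[G_{t-s}-G_{t_k-s}](x-\ud y)\,b(u(s,y))\,\ud s$ and $\int_0^{t_k}\int[G_{t-s}-G_{t_k-s}](x-y)\,\sigma(u(s,y))\,W(\ud s,\ud y)$.
\begin{itemize}
\item Drift. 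Already for spatially constant solutions of $u''=b(u)$ the first memory term equals $\delta\,(u'(t_k)-v_0)$, which is first order in $\delta$. So your $O(\delta^2e^NN^{\theta_1})$ drift bound and the factor $(1+C\delta^2N^{\theta_1})^{T/\delta}$ are wrong. With only $|u|\le e^N$ available, the per-step bounds $\delta t_k e^NN^{\theta_1}$ add up to order $T^2e^NN^{\theta_1}\gg e^N$.
\item Noise. The stochastic memory term cannot be made small from $|\sigma(u)|\le Ce^NN^{\theta_2}$ at all. The measures $G_{t-s}$ and $G_{t_k-s}$ sit on spheres of different radii, so the total variation of their difference does not shrink with $\delta$. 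Smallness needs cancellations, i.e.\ the H\"older regularity of Propositions~\ref{P:SpatInc} and~\ref{P:TemptInc}.
\end{itemize}

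\textbf{The cited estimates do not give what you use.} Propositions~\ref{P:SpatInc} and~\ref{P:TemptInc} bound increments by $Me^{c\beta t}$, times factors polynomial in $L_b,L_\sigma,p$, times $|x-y|^{2\bar\gamma}$ or $|t'-t|^{2\mu}$, where $\beta=\sqrt{L_b}+p^{1/(\bar\nu+1)}L_\sigma^{2/(\bar\nu+1)}$. They do not give $\delta^{\bar\nu+1}e^{2N}N^{2\theta_2}$, and the level $e^N$ never enters. Because $p^{1/(\bar\nu+1)}$ sits in the exponent, $p$-th moments grow like $\exp(cp^{(\bar\nu+2)/(\bar\nu+1)}\cdots)$. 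Hence there is no Gaussian tail of the form $\exp(-c\delta^{-(\bar\nu+1)}N^{-2\theta_2})$.

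\textbf{The level bookkeeping cannot close.} With $T/\delta$ steps, each step may raise the level by only $O(\delta)e^N$. The fresh noise on one step can be of order $N^{\theta_2}\delta^{(\nu+1)/2}e^N$, which is much larger than $\delta e^N$ when $\nu<1$ (e.g.\ Riesz kernels $|z|^{-\beta}$ with $\beta>1$). Moreover, $\delta$ is free in your constraints, so nothing actually produces the threshold $2\theta_2<\bar\nu+1$; it is asserted rather than derived.

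\textbf{What the paper does instead.} It never restarts. It compares the truncated solutions $u_{N_n}$, $N_n=K2^n$, on nested cones $C(t_n,0)$ with $t_n-t_{n-1}=1/(Kn)$. Throughout, it uses the moment and increment bounds of Propositions~\ref{P:GlobalLip}, \ref{P:SpatInc} and~\ref{P:TemptInc} taken from time $0$. These grow like $e^{cp\beta t_n}$, with $L_{b_N}\lesssim(\log N)^{\theta_1}$ and $L_{\sigma_N}\lesssim(\log N)^{\theta_2}$.
\begin{itemize}
\item The level is controlled by an additive doubling between consecutive cones (the events $E_n$). This comes from dyadic chaining of nearest-neighbour increments from $C(t_{n-1},0)$ into $C(t_n,0)$ plus Chebyshev's inequality.
\item The condition $\theta_1<2$ is exactly $t_n\sqrt{L_{b_{N_n}}}\ll\log N_n$.
\item The condition $\theta_2<(\bar\nu+1)/2$ comes from optimizing $-Ap+Bp^{(\bar\nu+2)/(\bar\nu+1)}$ over $p$. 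This yields $\exp(-cA^{\bar\nu+2}/B^{\bar\nu+1})$, which absorbs the entropy factor $2^{4m}$ and is summable in $n$.
\end{itemize}
The real mechanism is the comparison of $\beta t$ with the logarithm of the truncation level inside $e^{cp\beta t}N^{-p}$. To salvage your outline, replace the interval splitting and Gaussian tails by this moment, Chebyshev and chaining argument.
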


This paper is organized as follows. In Section \ref{Sec: Preliminaries}, we present some preliminary material about the equation \eqref{E:SWE}, as well as a moment bound for the solution. In Section \ref{Sec: holder regularity}, we study the spatial and temporal H\"{o}lder regularity of the solution, which will be used in Section \ref{S:ProofMainThm}, where we prove the main theorem. In Section \ref{S:Examples}, we give some examples of covariance functions to which our main result applies. In the following, both $c$ and $C$ denote generic positive constants whose values may change from line to line. Let $(\Omega,\mathcal{F},\P)$ be a complete probability space. For each $p\ge 1$, we denote by $\|\cdot\|_p$ the $L^p(\Omega)$ norm. For any globally Lipschitz function $g$, we write $L_g$ for its Lipschitz coefficient.

\section{Preliminaries}\label{Sec: Preliminaries}

In this section we introduce some basic elements needed in the proof of the main result. To begin with, let $W=\left\{W(\phi):\phi\in C^{\infty}_c(\R_+\times \R^3)\right\}$ be a family of centered Gaussian processes defined on the probability space $(\Omega,\mathcal{F},\P)$, indexed by compactly supported smooth functions $\phi \in C^{\infty}_c(\R_+\times \R^3)\eqqcolon \mathcal{D}$. For each $\phi, \psi\in \mathcal{D}$, the covariance is given by 
\begin{align*}
    \E \lr{W(\phi)W(\psi)}=\int_0^\infty\int_{\R^3}\int_{\R^3}\phi(s,x)\psi(s,y)f(x-y)\ud x\ud y \ud s\eqqcolon \langle \phi,\psi \rangle_{\mathcal{H}}\,.
\end{align*}
We denote by $\mathcal{H}$ the Hilbert space obtained as the completion of $\mathcal{D}$ with respect to the inner product $\langle\cdot,\cdot\rangle_{\mathcal{H}}$. Then, the map $W$ defines an isometry from $ \mathcal{D}$ to $ L^2(\Omega)$, which can be extended to $\mathcal{H}$. With the convention that $\mathds{1}_{[0,y]}\coloneqq-\mathds{1}_{[y,0]}$ when $y<0$, the indicator functions of the form $\mathds{1}_{[0,t]\times [0,x_1]\times [0, x_2]\times [0, x_3]}$ for $(t,x)\coloneqq(t,x_1,x_2,x_3)\in \R_+\times \R^3$ belong to $\mathcal{H}$. With a slight abuse of notation, we write $$W(t,x)=W(\mathds{1}_{[0,t]\times [0,x_1]\times [0, x_2]\times [0, x_3]}).$$
Formally, we write $\dot W(t,x)$ for the space-time noise corresponding to $W$, defined as the distributional time-space derivative of $W(t,x)$. With this notation, $\dot W$ is a centered Gaussian random field whose covariance structure is given by~\eqref{E:CovStructure} in the distributional sense. In particular, $\dot W$ is white in time and spatially correlated with covariance function $f$. 

For $t>0$, let $G_t$ be the fundamental solution to the three-dimensional wave equation $\partial_t^2 u=\Delta u$, then
\begin{equation}\label{E:Kernel}
    G_t=\frac{1}{4\pi t}A_t\,,
\end{equation}
where $A_t$ is the uniform surface measure (with total mass $4\pi t^2$) on the sphere in $\R^3$ with radius $t$. 
As a result, we have the scaling property
\begin{equation}\label{E:Scaling}
    G_t(\ud y)=tG_1(t^{-1} \ud y).
\end{equation}
In other words, $G_t$ is $t$ times the push-forward of $G_1$ under the map $y\mapsto ty$.

A mild solution $u(t,x)$ to~\eqref{E:SWE} is defined to solve the following integral equation
\begin{equation}\label{E:MildSolu}
\begin{aligned}
    u(t,x)=&V(t,x)+\int_0^t\int_{\R^3}G_{t-s}(x-\ud y)b\lr{u(s,y)}\ud s\\
    &+\int_0^t\int_{\R^3}G_{t-s}(x-y)\sigma\lr{u(s,y)}W(\ud s,\ud y)\,, \quad a.s.
    \end{aligned}
\end{equation}
for each fixed $(t,x)$. The last integral on the right-hand side of~\eqref{E:MildSolu} is understood in the localized version of the It\^{o}--Walsh stochastic integral (see~\cite{walsh:86:introduction}), for which we only require that 
$$
\int_0^t \int_{\R^3}\int_{\R^3} G_{t-s}(x-\ud y)G_{t-s}(x-\ud y')\sigma(u(s,y))\sigma(u(s,y'))f(y-y') \ud s < \infty\,, \quad a.s.
$$
 The deterministic function
$V$ is fixed by the initial data of the stochastic wave equation~\eqref{E:SWE} throughout the paper,
\begin{equation}\label{E:DefV}
\begin{aligned}
    V(t,x)=&\frac{\ud }{\ud t}\lr{G_tu_0}(x)+\lr{G_tv_0}(x)\\
    =& \frac{1}{4 \pi t^2} \int_{\mathbb{R}^3} \left( t v_0(x-y) + u_0(x-y) + \nabla u_0(x-y) \cdot y \right) \sigma_t(\ud y)\,.
    \end{aligned}
\end{equation}
In the case where both $b$ and $\sigma$ are globally Lipschitz, existence and uniqueness of the solution are known, see e.g., \cite{dalang.quer-sardanyons:11:stochastic}. However, here we will prove it again since we need a sharp moment bound. 
\begin{proposition}\label{P:GlobalLip}
    Assume that $b$ and $\sigma$ are globally Lipschitz with Lipschitz constants $L_b$ and $L_\sigma$, respectively. Assume further that the initial condition $u_0$ is bounded with bounded derivatives and that $v_0$ is bounded. Under assumption~\eqref{E:ZeroOrderDifference}, there exists a unique random field solution $u$ to the stochastic wave equation~\eqref{E:SWE} such that, for all $p\ge 2$ and $T>0$,
    \begin{equation} \label{E:MomentBdInfinity}
        \sup_{(t,x)\in [0,T]\times \mathbb{R}^3}\mathbb{E}|u(t,x)|^p
        \le (3M)^{\frac{p}{2}} e^{cp\beta_0 T}.
    \end{equation}
    Here 
    \begin{equation}\label{E:DefM}
    M=3\lr{T\|v_0\|_{L^\infty(\R^3)}+\|u_0\|_{L^\infty(\R^3)}+T\|\nabla u_0\|_{L^\infty(\R^3)}}^2 +Cp|\sigma(0)|^2\frac{T^{\nu+1}}{\nu+1}+C|b(0)|^2T^4\,,
\end{equation}
  and 
  $\beta_0 = C L_b^{1/2} + C p^{1/(\nu+1)} L_\sigma^{2/(\nu+1)}$, and the parameter $\nu$ is defined in~\eqref{E:ZeroOrderDifference}.
\end{proposition}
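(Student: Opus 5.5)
Existence and uniqueness will come from a standard Picard iteration. Set $u_0(t,x)=V(t,x)$ and
\[
u_{n+1}(t,x)=V(t,x)+\int_0^t\!\int_{\R^3}G_{t-s}(x-\ud y)\,b(u_n(s,y))\,\ud s+\int_0^t\!\int_{\R^3}G_{t-s}(x-y)\,\sigma(u_n(s,y))\,W(\ud s,\ud y).
\]
The two main facts about the kernel are these. First, $G_{t-s}(\R^3)=t-s$. Second, the scaling property \eqref{E:Scaling} together with \eqref{E:ZeroOrderDifference} gives
\[
\int_{\R^3}\int_{\R^3}G_r(\ud y)G_r(\ud y')f(y-y')\le C r^{\nu}.
\]
To see the second fact, write the double integral as $r^2\int\int G_1 G_1 f(r(\xi-\eta))$. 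Convolving two spherical measures produces a density bounded by $C/|z|$ on $\{|z|\le 2r\}$, which reduces the double integral to $\int_{|z|\le 2r} f(z)/|z|\,\ud z$.

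Uniqueness and convergence of the iteration use the same $L^p$ estimates applied to differences. By Burkholder's inequality and Minkowski's inequality,
\[
\|u_{n+1}-u_n\|_p^2\lesssim L_b^2\,t\int_0^t (t-s)\sup_x\|u_n-u_{n-1}\|_p^2\,\ud s+p L_\sigma^2\int_0^t (t-s)^{\nu}\sup_x\|u_n-u_{n-1}\|_p^2\,\ud s.
\]
Iterating this inequality gives a summable series on $[0,T]$. The limit is a random field solution, and the same estimate yields uniqueness among solutions with bounded $p$-th moments.

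For the sharp bound \eqref{E:MomentBdInfinity}, I would set $H(t)=\sup_x\|u(t,x)\|_p^2$ and split into three terms, using $(a+b+c)^2\le 3(a^2+b^2+c^2)$.
\begin{itemize}
\item The initial-data term: by \eqref{E:DefV}, $|V|\le T\|v_0\|_\infty+\|u_0\|_\infty+T\|\nabla u_0\|_\infty$.
\item The drift term: use $|b(u)|\le |b(0)|+L_b|u|$ and Minkowski's inequality. The constant part contributes $C|b(0)|^2T^4$, and the Lipschitz part contributes $CL_b^2\bigl(\int_0^t (t-s)H(s)^{1/2}\ud s\bigr)^2\le CL_b^2 t\int_0^t (t-s)H(s)\ud s$. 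It is better to keep it as $L_b^2\bigl(\int (t-s)\ldots\bigr)^2$.
\item The noise term: by Burkholder's inequality it is at most $Cp\int_0^t (t-s)^\nu\bigl(|\sigma(0)|^2+L_\sigma^2H(s)\bigr)\ud s$.
\end{itemize}
Altogether this gives
\[
H(t)\le M+C L_b^2 t\int_0^t (t-s)H(s)\,\ud s+CpL_\sigma^2\int_0^t (t-s)^\nu H(s)\,\ud s,
\]
with $M$ exactly as in \eqref{E:DefM}.

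The main obstacle is solving this renewal inequality with the exponent $\beta_0=CL_b^{1/2}+Cp^{1/(\nu+1)}L_\sigma^{2/(\nu+1)}$. To handle it, I would posit $H(t)\le 3M e^{2\beta t}$ and verify it by a bootstrap: run the argument on the Picard iterates, or on a continuity argument using the a priori boundedness already established. Substituting the ansatz, one has
\[
\int_0^t (t-s)^\nu e^{2\beta s}\,\ud s\le e^{2\beta t}\,\Gamma(\nu+1)(2\beta)^{-\nu-1}.
\]
For the drift, it is more natural to return to the un-squared Minkowski form $L_b\int (t-s)H^{1/2}$, which gives $L_b e^{\beta t}\beta^{-2}$. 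Choosing $\beta\ge CL_b^{1/2}$ and $(2\beta)^{\nu+1}\ge CpL_\sigma^2$ makes each contribution at most $M e^{2\beta t}$, so the ansatz closes with the constant $3M$. Taking the $p/2$ power then gives $(3M)^{p/2}e^{p\beta t}$, which is \eqref{E:MomentBdInfinity}.
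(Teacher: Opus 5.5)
Your proposal is correct and follows essentially the paper's proof. Propagating the ansatz $H(t)\le 3Me^{2\beta t}$ along the Picard iterates is exactly the paper's bound $\mathcal{N}_{\beta_0,p}(u^n)^2\le M+\tfrac12\mathcal{N}_{\beta_0,p}(u^{n-1})^2\le 3M$ in the weighted norm $\sup_{t,x}e^{-\beta_0 t}\|u(t,x)\|_p$: the drift is kept in un-squared Minkowski form to give $L_b^2\beta^{-4}$, and the noise gives $pL_\sigma^2\Gamma(\nu+1)\beta^{-(\nu+1)}$. The differences are minor: the paper also uses this weighted norm, rather than a factorial Gronwall iteration, to show the iterates are Cauchy; and your intermediate Cauchy--Schwarz form of the drift should carry $t^2/2$ rather than $t$, which is immaterial since you rightly abandon it.
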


Recall the following key lemma (see Lemma 6.4 in~\cite{hu.huang.ea:14:on}). 
{\begin{lemma}\label{L:KernelConv}
    For any $s\ge t$, let $G$ be the wave kernel given by~\eqref{E:Kernel}. We have the following identity
    \begin{align*}
        \lr{G_s\ast G_t}(\ud x) =\frac{1}{8\pi |x|}\mathds{1}_{[s-t,s+t]}(|x|)\ud x\,.
    \end{align*}
\end{lemma}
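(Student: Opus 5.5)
The plan is to compute the convolution of the two measures through its Fourier transform, since the convolution of two rotation-invariant measures is rotation-invariant. First I would recall the Fourier transform of the wave kernel in dimension three: $\widehat{G_t}(\xi)=\frac{\sin(t|\xi|)}{|\xi|}$. This follows from the standard computation $\int_{S^2} e^{-i r\omega\cdot\xi}A(\ud\omega)=4\pi\frac{\sin(r|\xi|)}{r|\xi|}$, combined with \eqref{E:Kernel} and the fact that $A_t$ has total mass $4\pi t^2$. Hence
\[
\widehat{G_s\ast G_t}(\xi)=\frac{\sin(s|\xi|)\sin(t|\xi|)}{|\xi|^2}=\frac{\cos((s-t)|\xi|)-\cos((s+t)|\xi|)}{2|\xi|^2}.
\]

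Next I would identify the right-hand side $\mu(\ud x)=\frac{1}{8\pi|x|}\mathds{1}_{[s-t,s+t]}(|x|)\ud x$ as a superposition of spherical measures. In polar coordinates,
\[
\mu=\int_{s-t}^{s+t}\frac{1}{8\pi r}A_r\,\ud r=\frac12\int_{s-t}^{s+t}G_r\,\ud r .
\]
Both sides are finite measures: $\mu$ has mass $\frac12\int r\,\ud r<\infty$, and the convolution has mass $st$. It is therefore enough to compare Fourier transforms. Using the transform above,
\[
\widehat{\mu}(\xi)=\frac12\int_{s-t}^{s+t}\frac{\sin(r|\xi|)}{|\xi|}\ud r=\frac{\cos((s-t)|\xi|)-\cos((s+t)|\xi|)}{2|\xi|^2},
\]
which agrees with $\widehat{G_s\ast G_t}$. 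Uniqueness of the Fourier transform for finite measures then gives the identity. The condition $s\ge t$ is used only so that $s-t\ge0$ and the annulus is well defined; the case $s=t$ is harmless because $\mathds{1}_{[0,2t]}$ is fine.

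The main obstacle is justifying $\widehat{G_t}(\xi)=\sin(t|\xi|)/|\xi|$ with the correct normalization. To do this I would use rotation invariance to take $\xi=|\xi|e_3$, and then evaluate $\int_{S^2}e^{-ir|\xi|\cos\phi}\,2\pi\sin\phi\,\ud\phi$ directly, which gives $4\pi\sin(r|\xi|)/(r|\xi|)$.

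As an alternative, and a check on the constants, I would also verify the identity by a direct geometric computation. The convolution $G_s\ast G_t$ is rotation-invariant, so it suffices to find its radial density. For this, test against a radial $\varphi(|x|)$ and write
\[
\int\varphi\,\ud(G_s\ast G_t)=\frac{st}{16\pi^2}\int_{S^2}\int_{S^2}\varphi(|s\omega+t\eta|)\,A(\ud\omega)A(\ud\eta).
\]
The quantity $|s\omega+t\eta|^2=s^2+t^2+2st\cos\phi$ is uniformly distributed on $[(s-t)^2,(s+t)^2]$ by Archimedes' theorem. This gives density $\frac{st(4\pi)^2}{16\pi^2}\cdot\frac{r}{st}$ for $|x|$ on $[s-t,s+t]$, that is $r\,\ud r$. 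Dividing by $4\pi r^2$ yields $\frac{1}{4\pi r}$, so one of the two normalizations must be tracked carefully against the stated factor $\frac1{8\pi}$. The Fourier route is cleaner, so I would use it as the main argument.
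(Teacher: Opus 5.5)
Your main argument is correct and complete. The formula $\widehat{G_t}(\xi)=\sin(t|\xi|)/|\xi|$ follows from \eqref{E:Kernel} exactly as you compute, and the product-to-sum identity gives $\widehat{G_s\ast G_t}$. The polar decomposition $\frac{1}{8\pi|x|}\mathds{1}_{[s-t,s+t]}(|x|)\,\ud x=\frac12\int_{s-t}^{s+t}G_r\,\ud r$ is right, and Fubini plus uniqueness of Fourier transforms of finite measures closes the proof. The paper does not prove this lemma at all; it imports it from Lemma 6.4 of \cite{hu.huang.ea:14:on}. Your argument is therefore a self-contained substitute that the paper does not supply, at the cost of a few lines of Fourier analysis.

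Your secondary ``geometric check'' contains an arithmetic slip, and the discrepancy in the constant that you flagged is not real. Since $\cos\phi$ is uniform on $[-1,1]$, the quantity $R^2=|s\omega+t\eta|^2$ is uniform on $[(s-t)^2,(s+t)^2]$. That interval has length $4st$, not $2st$, so $R$ has density $\frac{2r}{4st}=\frac{r}{2st}$ on $[s-t,s+t]$. Multiplying by the total mass $\frac{st}{16\pi^2}(4\pi)^2=st$ gives the radial law $\frac r2\,\ud r$. Dividing by $4\pi r^2$ then gives exactly $\frac{1}{8\pi r}$, in agreement with the lemma. Your own mass count would have caught this: the density $r\,\ud r$ on $[s-t,s+t]$ has mass $2st$, whereas you correctly noted that $G_s\ast G_t$ has mass $st$.

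With the factor corrected, the Archimedes computation is itself a complete elementary proof that avoids Fourier transforms. You would combine it with rotation invariance of $G_s\ast G_t$ to pass from radial test functions to general ones. In a write-up you should either fix this check or drop it, rather than leave an unresolved doubt about the constant $\frac{1}{8\pi}$.
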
}
\begin{proof}[Proof of Proposition~\ref{P:GlobalLip}]
    We define Picard iterations
        \begin{align}
        u^0(t,x)=&V(t,x)=\frac{\ud }{\ud t}\lr{G_tu_0}(x)+\lr{G_tv_0}(x)\,,\label{E:FirstPicard}\\
        \notag u^{n+1}(t,x)=&u^0(t,x)+\int_0^t \int_{\R^3}G_{t-s}(x-\ud y) b\lr{u^n(s,y)}\ud s\\
        &+\int_0^t\int_{\R^3}G_{t-s}(x-y)\sigma\lr{u^n(s,y)}W\lr{\ud s,\ud y}\label{E:Picard}\,.
    \end{align}
    Apply $L^p(\Omega)$ norm, the Burkholder-Davis-Gundy inequality (see, e.g., the version in Theorem~B.1 of~\cite{khoshnevisan:14:analysis}) and Minkowski inequality to~\eqref{E:Picard} to see that
    \begin{equation}\label{E:PicardPrimary}
    \begin{aligned}
            & \Norm{u^{n+1}(t,x)}_p\le |V(t,x)|+\Norm{\int_0^t\int_{\R^3}G_{t-s}(x-\ud y)b\lr{u^n(s,y)}\ud s}_p\\
    &\ +C\sqrt{p}\Bigg\|\int_0^t\int_{\R^3}\int_{\R^3}G_{t-s}(x-\ud y)G_{t-s}(x-\ud y')f(y-y')\sigma\lr{u^n(s,y)}\sigma\lr{u^n(s,y')}\ud s\Bigg\|_{\frac p2}^{\frac12}\\
   &\le |V(t,x)|+{\int_0^t\int_{\R^3}G_{t-s}(x-\ud z)\lr{|b(0)|+L_b\sup_{y\in \R^3}\Norm{u^n(s,y)}_p}}\ud s\\
    &\quad+ C\sqrt{p}\lr{\int_0^t\int_{|z|\le 2(t-s)}\frac{f(z)}{|z|}\ud z\lr{\sigma(0)^2+L_\sigma^2\sup_{y\in \R^3}\Norm{u^n
    (s,y)}_p^2\ud s}}^\frac{1}{2}\,,
    \end{aligned}
    \end{equation}
where we applied Lemma~\ref{L:KernelConv} in the second inequality. We take supremum over spatial variables, and take into account~\eqref{E:ZeroOrderDifference} to get 
\begin{equation}\label{E:PicardMoment}
\begin{aligned}
 \sup_{x\in\R^3}\Norm{u^{n+1}(t,x)}_p^2\le& 3\sup_{x\in \R^3}|V(t,x)|^2+Cp|\sigma(0)|^2\frac{t^{\nu+1}}{\nu+1}+Cb(0)^2t^4\\
 &+CL_b^2\lr{\int_0^t(t-s)\sup_{y\in \R^3}\Norm{u^n(s,y)}_p\ud s}^2\\
    &+CpL_\sigma^2\int_0^t\lr{t-s}^\nu \sup_{y\in \R^3}\Norm{u^n(s,y)}_p^2\ud s \,.
\end{aligned}
\end{equation}
For the first term on the right-hand side, we obtain from the boundedness of $v_0$, $u_0$ and $\nabla u_0$,
\begin{align*}
    \sup_{x\in\R^3}|V(t,x)|\le& \sup_{x\in\R^3}\left|\frac{1}{4 \pi t^2} \int_{\mathbb{R}^3} \left( t v_0(x-y) + u_0(x-y) + \nabla u_0(x-y) \cdot y \right) \sigma_t(\ud y)\right|\\
    \le& t\|v_0\|_{L^\infty(\R^3)}+\|u_0\|_{L^\infty(\R^3)}+t\|\nabla u_0\|_{L^\infty(\R^3)}\,.
\end{align*}

Consider the norm
  \begin{equation}\label{E:STN}
    \mathcal{N}_{\beta_0,\: p} (u) \coloneqq
    \sup_{t\in[0,T]} \sup_{x\in \R^3} e^{-\beta_0 t} \Norm{u(t,x)}_p
    \quad \text{for ${\beta_0 > 0}$ and $p \ge 2$.}
  \end{equation}
From~\eqref{E:PicardMoment}, we get
\begin{equation}\label{E:SpaceTimeNormCalculation}
    \begin{aligned}
        e^{-2\beta_0 t}\left\|u^{n+1}(t,x)\right\|_{p}^2\le& M+L_b^2 \mathcal{N}_{\beta_0,p}(u^n)^2\lr{\int_0^t e^{-\beta_0 (t-s)}(t-s) \ud s}^2\\
    &+CpL_\sigma^2\mathcal{N}_{\beta_0,p}(u^n)^2\int_0^t e^{-2\beta_0(t-s)}\lr{t-s}^\nu \ud s\\
    \le &M+\lr{\frac{CL_b^2}{\beta_0^4}+\frac{CpL_\sigma^2\Gamma(\nu+1)}{\beta_0^{\nu+1}}}\mathcal{N}_{\beta_0,\: p} (u^n)^2\,. 
    \end{aligned}
\end{equation}
Thus, by taking supremum in $x$ and $t$, we obtain
\begin{align*}
    \mathcal{N}_{\beta_0,\: p} (u^{n+1})^2\le M+\lr{\frac{CL_b^2}{\beta_0^4}+\frac{CpL_\sigma^2\Gamma(\nu+1)}{\beta_0^{\nu+1}}}\mathcal{N}_{\beta_0,\: p} (u^n)^2\,.
\end{align*}
For all $\beta_0$ such that

\begin{align}\label{E:DefBeta}
    \beta_0\ge \max\Bigl\{[4CpL_\sigma^2\Gamma(\nu+1)]^{\frac{1}{\nu+1}},\lr{4CL_b^2}^{1/4}\Bigr\}\eqqcolon\bar{\beta}_0\,,
\end{align}
we have $\frac{CL_b^2}{\beta_0^4}+\frac{CpL_\sigma^2\Gamma(\nu+1)}{\beta_0^{\nu+1}}\le 1/2$. As a result,
\begin{align}\label{E:NormIter}
    \mathcal{N}_{\beta_0,\: p} (u^{n+1})^2\le &M+\frac{1}{2}\mathcal{N}_{\beta_0,\: p} (u^n)^2\,.
\end{align}
Standard iteration of~\eqref{E:NormIter} shows that
\begin{align*}
    \mathcal{N}_{\beta_0,p}(u^{n+1})^2\le& \sum_{i=0}^{n}\lr{\frac{1}{2}}^iM+\lr{\frac{1}{2}}^{n+1}\mathcal{N}_{\beta_0,p}(u^0)^2 \le 3M\,,
\end{align*}
where $u^0(t,x)$ is the first Picard iteration~\eqref{E:FirstPicard}. Consequently, for all $n\ge 0$,
\begin{align}\label{E:UnifNormBd}
    \mathcal{N}_{\beta_0,\: p} (u^n)\le &\sqrt{3M}\,.
\end{align}
Multiply both sides with $e^{\beta_0 T}$ and take $p$-th power to obtain
\begin{align}\label{E:MomentBoundun}
    \sup_{(t,x)\in [0,T]\times \R^3}\E\left|u^n(t,x)\right|^p\le (3M)^{\frac{p}{2}}e^{cp\beta_0 T}\,,
\end{align}
where $M$ and $\beta_0$ are given by~\eqref{E:DefM} and~\eqref{E:DefBeta}, and the right-hand side is independent of $n$. Now we show that the sequence $\{u^n(t,x)\}$ converges uniformly in $L^p(\Omega)$. Following the same steps as in~\eqref{E:PicardPrimary} and the Lipschitz property of $b$ and $\sigma$, we have
\begin{align*}
    &\|u^{n+1}(t,x)-u^n(t,x)\|_p^2\\
    \le & 2\left\|\int_0^t\int_{\R^3}G_{t-s}(x-\ud y)\left[b(u^n(s,y))-b(u^{n-1}(s,y))\right]\ud s\right\|_p^2\\
    &+C{p}\Big\|\int_0^t\int_{\R^3}\int_{\R^3}G_{t-s}(x-\ud y)G_{t-s}(x-\ud y')f(y-y')\\
&\quad\times\left[\sigma(u^{n}(s,y))-\sigma(u^{n-1}(s,y))\right]\left[\sigma(u^{n}(s,y))-\sigma(u^{n-1}(s,y))\right]\ud s\Big\|_{p/2}\\
\leq&2L_b^2 \left(\int_0^t (t-s)\sup_{y \in \R^3}\|u^n(s,y) - u^{n-1}(s,y)\|_p \ud s\right)^2\\
&+Cp L_{\sigma}^2 \int_0^t \int_{|z|\leq 2(t-s)} \frac{f(z)}{|z|} \ud z\sup_{y \in \R^3} \|u^n(s,y) - u^{n-1}(s,y)\|_p^2 \ud s\,. 
\end{align*}
Using \eqref{E:ZeroOrderDifference} and applying the norm $\mathcal{N}_{\beta_0,p}$ again and from the same calculation as in~\eqref{E:SpaceTimeNormCalculation}, it follows that 
\begin{equation}
\begin{aligned}
&\mathcal{N}_{\beta_0, p}(u^{n+1}-u^n)^2\\
\leq & 2L_b^2 \left(\int_0^t (t-s)e^{-\beta_0 (t-s)} \mathcal{N}_{\beta_0, p}(u^n-u^{n-1})ds\right)^2\\
&+C p L_{\sigma}^2 \int_0^t (t-s)^{\nu} e^{-2\beta_0 (t-s)}\mathcal{N}_{\beta_0, p}(u^n-u^{n-1})^2 ds \\
\leq& \left(\frac{2L_b^2}{\beta_0^4}+ \frac{c p L_{\sigma}^2}{\beta_0^{\nu+1}} \right) \mathcal{N}_{\beta_0, p}(u^n-u^{n-1})^2\,.
\end{aligned}
\end{equation}
We see that $\{u^n\}$ is Cauchy with respect to the norm $\mathcal{N}_{\beta_0,p}$ for $\beta_0$ sufficiently large and hence with respect to the norm $\sup_{t\in [0,T]}\sup_{x\in \R^3}\Norm{u(t,x)}_p$. We denote the limit by $u(t,x)$. Passing to the limit in~\eqref{E:Picard} shows that $u$ solves~\eqref{E:SWE}. Uniqueness follows from a standard argument together with the globally Lipschitz property of $b$ and $\sigma$. Also, we obtain the moment estimate,
\begin{align}\label{E:MomentBound}
    \sup_{(t,x)\in [0,T]\times \R^3}\Norm{u(t,x )}_p^p\le (3M)^{\frac{p}{2}}e^{cp\beta_0 T}<\infty\,,
\end{align}
which completes the proof by taking $\beta_0={[4CpL_\sigma^2\Gamma(\nu+1)}]^{\frac{1}{\nu+1}}+\lr{4CL_b^2}^{1/4}$.
\end{proof}
\begin{remark}\label{R:Beta_0}
    It is clear from the proof  that~\eqref{E:UnifNormBd} still holds if $\beta_0$ is replaced by any larger number.
\end{remark}

\section{H\"{o}lder regularity}\label{Sec: holder regularity}
To prove Theorem \ref{T:MainTheorem}, we will need the estimates of the moments of the spatial and temporal increments of the solution. In this section, we will temporarily assume that $b$ and $\sigma$ are globally Lipschitz with Lipschitz coefficients $L_b$ and $L_{\sigma}$ respectively. 
\subsection{Spatial increment}\label{S:SpatInc}
\begin{proposition}\label{P:SpatInc}
    Suppose the initial values $u_0$, $v_0$ and the spatial covariance function $f$ satisfy the assumptions of Theorem~\ref{T:MainTheorem}. Let $u$ be the solution to~\eqref{E:SWE}. Then, there exist constants $c,C>0$ such that for all $T>0$, $p\ge 2$, $x,y\in \R^3$, and $|x-y|\le 1$, $t\in [0,T]$, 
    \begin{equation} \label{E:SpatialIncResult}       
     \Norm{u(t,x)-u(t,y)}_p^2\le CM{e^{c\beta t}}|x-y|^{2{\bar{\gamma}}}\,,   
    \end{equation}
    where $\beta={\sqrt{L_{b}}+p^{\frac{1}{\overline{\nu}+1}}L_{\sigma}^{\frac{2}{\overline{\nu}+1}}}$. Here $\bar\gamma = \min\{{\alpha_1},\alpha_2,\alpha_3, \gamma_1, \gamma_2\}$, $\bar{\nu}= \min\{\nu, \nu_1, \nu_2\}$, and $L_b, L_\sigma$ are the Lipschitz constants of $b$ and $\sigma$, respectively.
\end{proposition}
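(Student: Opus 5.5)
We would write, with $w=x-y$ and $|w|\le1$,
\[
u(t,x)-u(t,y)=\bigl[V(t,x)-V(t,y)\bigr]+I_b(t)+I_\sigma(t),
\]
where $I_b$ and $I_\sigma$ are the differences of the drift and stochastic convolutions. The aim is a Gronwall-type inequality for
\[
\phi(t):=\sup_{|x-y|=|w|}\Norm{u(t,x)-u(t,y)}_p^2,
\]
closed by the weighted sup-norm trick of Proposition~\ref{P:GlobalLip}, now applied to $\phi(t)e^{-2\beta_0 t}|w|^{-2\bar\gamma}$.

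\textbf{Initial data.} The deterministic term is handled by the formula \eqref{E:DefV}. Translation invariance gives
\[
V(t,x)-V(t,y)=\frac{1}{4\pi t^2}\int\bigl[t(v_0(x-z)-v_0(y-z))+(u_0(x-z)-u_0(y-z))+(\nabla u_0(x-z)-\nabla u_0(y-z))\cdot z\bigr]\,\sigma_t(\ud z).
\]
The first two pieces are bounded by $C(1+T)|w|^{\alpha_3\wedge\alpha_1}$.

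The gradient piece is where $\Delta u_0$ enters. Following \cite{hu.huang.ea:14:on}, write the spherical mean of $\nabla u_0\cdot z$ via the divergence theorem as $\int_{|z|\le t}\Delta u_0(x-z)\,\ud z$, up to a constant. Its increment is then $O(t^3|w|^{\alpha_2})$. Since $|w|\le1$, all three exponents can be lowered to $\bar\gamma$, giving $|V(t,x)-V(t,y)|^2\le CM|w|^{2\bar\gamma}$ after absorbing the $T$-powers into $M$.

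\textbf{Drift term.} By Minkowski and the Lipschitz bound,
\[
\Norm{I_b}_p\le L_b\int_0^t(t-s)\,\phi(s)^{1/2}\,\ud s,
\]
since $G_{t-s}$ has total mass $t-s$ and is the same measure at $x$ and at $y$.

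\textbf{Stochastic term (main obstacle).} Since $G$ is a measure, we cannot put the spatial difference on the kernel. Instead, as in \cite{hu.huang.ea:14:on,dalang.sanz-sole:09:holder-sobolev}, we shift variables so that
\[
I_\sigma=\int_0^t\int G_{t-s}(\ud z)\bigl[\sigma(u(s,x-z))-\sigma(u(s,y-z))\bigr]W(\ud s,\,\cdot\,).
\]
This is not quite legitimate, because the noise is shifted too. We would therefore split
\[
I_\sigma=\int G_{t-s}(x-\ud z)\bigl[\sigma(u(s,z))-\sigma(u(s,z-w))\bigr]W(\ud s,\ud z)+R,
\]
where $R$ is the difference between integrating $\sigma(u(s,\cdot-w))$ against the noise centered at $x$ and the same quantity at $y$.

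By BDG, the square of the first part is bounded by
\[
Cp\int_0^t\iint G_{t-s}(\ud z)\,G_{t-s}(\ud z')\,f(z-z')\,\Norm{\Delta_w\sigma(s,z)}_p\,\Norm{\Delta_w\sigma(s,z')}_p\,\ud s,
\]
where $\Delta_w\sigma(s,z)=\sigma(u(s,z))-\sigma(u(s,z-w))$. This is at most $CpL_\sigma^2\int_0^t(t-s)^\nu\phi(s)\,\ud s$ by Lemma~\ref{L:KernelConv} and \eqref{E:ZeroOrderDifference}.

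The residual $R$ gives terms with $f(z-z'+w)-f(z-z')$ and second differences of $f$, multiplied by products of $\Norm{\sigma(u)}_p$. These are bounded by $\sigma(0)$ plus $L_\sigma$ times the moment bound \eqref{E:UnifNormBd}, i.e. by $C\,M e^{2\beta_0 s}$. Assumptions \eqref{E:FirstOrderDifference}--\eqref{E:SecondOrderDifference} then turn them into
\[
CpM|w|^{2\bar\gamma}\int_0^t\bigl[(t-s)^{\nu_1}+(t-s)^{\nu_2}\bigr]\bigl(1+L_\sigma^2e^{2\beta_0 s}\bigr)\,\ud s.
\]
The exponents $\gamma_1$ and $2\gamma_2$ reduce to $2\bar\gamma$ because $|w|\le1$, possibly after splitting a cross term via Cauchy--Schwarz. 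Getting this decomposition right, so that every error term carries a difference of $f$ rather than of the measure, is the delicate step. If needed we would follow the Fourier/convolution decomposition of \cite{hu.huang.ea:14:on} verbatim.

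\textbf{Closing the argument.} Collecting the pieces,
\[
\phi(t)\le CM|w|^{2\bar\gamma}e^{2c\beta_0 t}+CL_b^2\Bigl(\int_0^t(t-s)\phi^{1/2}\,\ud s\Bigr)^2+CpL_\sigma^2\int_0^t(t-s)^{\bar\nu}\phi\,\ud s.
\]
Here we used $(t-s)^{\nu}\le C(t-s)^{\bar\nu}$ on $[0,T]$. Multiplying by $e^{-2\beta t}|w|^{-2\bar\gamma}$ and taking the supremum gives
\[
\mathcal N\le CM+\Bigl(\frac{CL_b^2}{\beta^4}+\frac{CpL_\sigma^2}{\beta^{\bar\nu+1}}\Bigr)\mathcal N.
\]
Choosing $\beta\ge c(\sqrt{L_b}+p^{1/(\bar\nu+1)}L_\sigma^{2/(\bar\nu+1)})$ makes the bracket at most $1/2$. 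Remark~\ref{R:Beta_0} allows enlarging $\beta_0$ to this $\beta$. This yields \eqref{E:SpatialIncResult}, with a priori finiteness of $\mathcal N$ coming from the Picard scheme, where the same estimates hold uniformly in $n$.
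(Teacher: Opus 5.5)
Your argument works, and at the stochastic term it takes a genuinely different route from the paper. However, your paragraph on $R$ misdescribes what your own decomposition produces.

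\textbf{How the paper handles the stochastic term.} The paper never splits the stochastic integral. It applies BDG to the full integrand $[G_{t-s}(x-\cdot)-G_{t-s}(y-\cdot)]\sigma(u)$ and expands the quadratic variation. It then recentres the four pieces and regroups them into $h_1,\dots,h_4$. The mixed terms $h_2,h_3$ are first differences of $f$ times $\Sigma_x\Sigma_{x,y}$. They need \eqref{E:FirstOrderDifference} together with the weighted Young inequality $ab\le\frac12(|w|^{\gamma_1}a^2+|w|^{-\gamma_1}b^2)$.

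\textbf{How your route differs.} You split at the level of random variables, $\Norm{I_1+R}_p^2\le2\Norm{I_1}_p^2+2\Norm{R}_p^2$. This discards exactly the cross-variation that generates $h_2,h_3$. If you carry out the computation for $R$, no first differences appear at all. Put $\Psi_s(\ud z):=G_{t-s}(x-\ud z)\,\sigma(u(s,z-w))$. Then $G_{t-s}(y-\ud z)\sigma(u(s,z))$ is the image $\tau_w\Psi_s$ of $\Psi_s$ under $z\mapsto z-w$. Translation invariance therefore gives
\[
\iint(\Psi_s-\tau_w\Psi_s)(\ud z)\,(\Psi_s-\tau_w\Psi_s)(\ud z')\,f(z-z')=\iint\Psi_s(\ud z)\,\Psi_s(\ud z')\bigl[2f(z-z')-f(z-z'+w)-f(z-z'-w)\bigr].
\]
After recentring at $x$, this is the paper's $h_4$ with $\Sigma_y$ in place of $\Sigma_x$. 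Minkowski, Lemma~\ref{L:KernelConv} and \eqref{E:SecondOrderDifference} then give
\[
\Norm{R}_p^2\le Cp\,|w|^{2\gamma_2}\int_0^t(t-s)^{\nu_2}\sup_z\Norm{\sigma(u(s,z))}_p^2\,\ud s .
\]
So your route needs only \eqref{E:ZeroOrderDifference} and \eqref{E:SecondOrderDifference}, at the cost of a factor $2$. For this proposition one could even drop $\gamma_1,\nu_1$ from $\bar\gamma,\bar\nu$.

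\textbf{What to fix.} Replace your description of $R$ with this computation, because the reasoning you gave would fail if the terms you anticipated were really present. A term $[f(z-z'+w)-f(z-z')]$ weighted only by $\Norm{\sigma(u)}_p\Norm{\sigma(u)}_p$ yields $|w|^{\gamma_1}$ under \eqref{E:FirstOrderDifference}, not $|w|^{2\gamma_1}$. It is not dominated by $|w|^{2\bar\gamma}$ when $\gamma_1<2\bar\gamma$, e.g.\ when $\bar\gamma=\gamma_1$. The doubled exponent for a first difference appears only when that difference is paired with a difference factor $\Sigma_{x,y}$, as in the paper's $h_2,h_3$.

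Your initial-data, drift and weighted sup-norm closing steps are essentially the paper's.
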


\begin{proof}
 The key idea of the proof is to recenter all spherical surface measures $G_{t-s}(\cdot-\ud z)$ at the origin by applying an appropriate change of variables. In the following, we apply the following notations,
\begin{equation}\label{E:NotationSigma}
    \begin{aligned}
        \Sigma_{x}(s,z)=&\sigma\lr{u(s,x-z)}\,,\\
    \Sigma_{x,y}(s,z)=&\sigma\lr{u(s,x-z)}-\sigma\lr{u(s,y-z)}\,.
    \end{aligned}
\end{equation}
Recalling the mild formulation~\eqref{E:MildSolu}, it follows that
\begin{align*}
    u(t,x)-u(t,y)=&V(t,x)-V(t,y)+\int_0^t\int_{\R^3}\bigl[G_{t-s}(x-\ud z)-G_{t-s}(y-\ud z)\bigr]b\lr{u(s,z)}\ud s\\
    &+\int_0^t\int_{\R^3}\left[G_{t-s}(x-z)-G_{t-s}(y-z)\right]\sigma(u(s,y))W(\ud s,\ud z)\,.
\end{align*}
Again applying $L^p(\Omega)$ norm, Burkholder-Davis-Gundy inequality and Minkowski inequality, we see that
\begin{equation}\label{E_:SpatialHolder}
    \begin{aligned}
        &\Norm{u(t,x)-u(t,y)}_p^2\le 3|V(t,x)-V(t,y)|^2\\
    &+3\Norm{\int_0^t\int_{\R^3}G_{t-s}(x-\ud z)b\lr{u(s,z)}\ud s-\int_0^t\int_{\R^3}G_{t-s}(y-\ud z)b\lr{u(s,z)}\ud s}_p^2\\
    &+Cp\Bigg\|\int_0^t\int_{\R^3}\int_{\R^3}\left[ G_{t-s}(x-\ud z)-G_{t-s}(y-\ud z)\right]\left[ G_{t-s}(x-\ud z')-G_{t-s}(y-\ud z')\right]\\
    &\qquad\times f(z-z')\sigma(u(s,z))\sigma(u(s,z'))\ud s\Bigg\|_{p/2}\\
    &\le 3|V(t,x)-V(t,y)|^2+3P+Cp\sum_{i=1}^4\Norm{\int_0^t\int_{\R^3}\int_{\R^3}G_{t-s}(\ud z)G_{t-s}(\ud z')h_i\ud s}_{p/2}\\
    &\eqqcolon 3|V(t,x)-V(t,y)|^2+3P +Cp\sum_{i=1}^4 Q_i\,,
    \end{aligned}
\end{equation}
where
\begin{align}\label{E:DefP}
    P=&\Norm{\int_0^t\int_{\R^3}G_{t-s}(x-\ud z)b\lr{u(s,z)}\ud s-\int_0^t\int_{\R^3}G_{t-s}(y-\ud z)b\lr{u(s,z)}\ud s}_p^2
\end{align}
and for $i=1,2,3,4$,
\begin{align*}
    Q_i=\Norm{\int_0^t\int_{\R^3}\int_{\R^3}G_{t-s}(\ud z)G_{t-s}(\ud z')h_i\ud s}_{p/2}\,. 
\end{align*}
Recalling the notations~\eqref{E:NotationSigma} and by writing $x-y\eqqcolon w$, the $h_i$ above are given by
\begin{align*}
    h_1=&f(z'-z)\Sigma_{x,y}(s,z)\Sigma_{x,y}(s,z')\,,\\
    h_2=&\left[f(z'-z+w)-f(z'-z)\right]\Sigma_{x}(s,z)\Sigma_{x,y}(s,z')\,,\\
    h_3=&\left[f(z'-z-w)-f(z'-z)\right]\Sigma_{x}(s,z')\Sigma_{x,y}(s,z)\,,\\
    h_4=&\left[2f(z'-z)-f(z'-z+w)-f(z'-z-w)\right]\Sigma_{x}(s,z)\Sigma_{x}(s,z')\,.
\end{align*}
Recalling that $V(t,x)$ is given by~\eqref{E:DefV}, we have
\begin{align*}
    3|V(t,x)-V(t,y)|^2\le& 6\left|\lr{G_tv_0}(x)-\lr{G_tv_0}(y)\right|^2+6\left|\frac{\ud }{\ud t}\lr{G_tu_0}(x)-\frac{\ud }{\ud t}\lr{G_tu_0}(y)\right|^2\,.
\end{align*}
By the H\"{o}lder regularity of $v_0$,
\begin{align*}
    \left|\lr{G_tv_0}(x)-\lr{G_tv_0}(y)\right|\le &\left|\int_{\R^3}G_{t}\lr{\ud z}|v_0(x-z)-v_0(y-z)|\right|
    \le C|w|^{\alpha_3}\,.
\end{align*}
Using the identity (see, e.g.~\cite[Equation (3.4)]{hu.huang.ea:14:on}),
\begin{align*}
    \frac{\ud }{\ud t}\lr{G_tu_0}(x)=\frac{1}{t}\int_{\R^3}u_0(x-z)G_{t}(\ud z)+\frac{1}{4\pi}\int_{|z|<1}\lr{\Delta u_0}\lr{x+tz}\ud z\,,
\end{align*}
together with the H\"{o}lder regularity of $u_0$ and $\Delta u_0$, we have
\begin{align*}
    \left|\frac{\ud }{\ud t}\lr{G_tu_0}(x)-\frac{\ud }{\ud t}\lr{G_tu_0}(y)\right|\le& \frac{1}{t}\int_{\R^3}|u_0(x-z)-u_0(y-z)|G_t(\ud z)\\
    &+\frac{1}{4\pi}\int_{|z|<1}|\lr{\Delta u_0}\lr{x+tz}-\lr{\Delta u_0}\lr{y+tz}|\\
    \le& C\lr{|w|^{\alpha_1}+|w|^{\alpha_2}}\,.
\end{align*}
Consequently, we obtain
\begin{align}\label{E:Vdiff_Spatial}
    3|V(t,x)-V(t,y)|^2
    \le C\lr{|w|^{2\alpha_1}+|w|^{2\alpha_2}+|w|^{2\alpha_3}}\,.
\end{align}
For $P$ given in~\eqref{E:DefP}, by the Lipschitz continuity of $b$ and the expression of $G_t(x)$ in~\eqref{E:Kernel}, we see that
\begin{equation}\label{E_:PEst}
    \begin{aligned}
        P=&\Norm{\int_0^t\int_{\R^3}G_{t-s}(x-\ud z)b\lr{u(s,z)}\ud s-\int_0^t\int_{\R^3}G_{t-s}(y-\ud z)b\lr{u(s,z)}\ud s}_p^2\\
    =&\Norm{\int_0^t\int_{\R^3}G_{t-s}(x-\ud z)\left[b\lr{u(s,z)}-b\lr{u(s,z+(x-y))}\right]\ud  s}_p^2\\
    \le& L_{b}^2\lr{\int_0^t\int_{\R^3}G_{t-s}(x-\ud z)\sup_{|\xi-\xi'|=|w|}\Norm{u(s,\xi)-u(s,\xi')}_p\ud s}^2\\
    =&CL_{b}^2\left(\int_0^t\lr{t-s}\sup_{|\xi-\xi'|=|w|}\Norm{u(s,\xi)-u(s,\xi')}_p\ud s\right)^2\,.
    \end{aligned}
\end{equation}
For $\sum_{i=1}^4Q_i$, by the Lipschitz continuity of $\sigma$ and the assumption~\eqref{E:ZeroOrderDifference}, we obtain
\begin{equation}\label{E_:Q_1}
    \begin{aligned}
        Q_1=&\Bigg\|\int_0^t\int_{\R^3}\int_{\R^3}G_{t-s}(\ud z)G_{t-s}(\ud z')f(z-z')\\
    &\times \left[\sigma(u(s,x-z))-\sigma(u(s,y-z))\right]\left[\sigma(u(s,x-z'))-\sigma(u(s,y-z'))\right]\ud s\Bigg\|_{p/2}\\
    \le&L_{\sigma}^2\int_0^t\lr{\int_{|z|\le 2(t-s)}\frac{f(z)}{8\pi |z|}\ud z}\sup_{|\xi-\xi'|=|w|}\Norm{u(s,\xi)-u(s,\xi')}_p^2\ud s\\
    \le& CL_{\sigma}^2\int_0^t (t-s)^{\nu}\sup_{|\xi-\xi'|=|w|}\Norm{u(s,\xi)-u(s,\xi')}_p^2\ud s\,.
    \end{aligned}
\end{equation}
We separate $Q_2$ into two terms and apply again the Lipschitz continuity of $\sigma$ to get
\begin{align*}
    Q_2=&\Bigg\|\int_0^t\int_{\R^3}\int_{\R^3}G_{t-s}(\ud z) G_{t-s}(\ud z')\left[f(z-z'+w)-f(z-z')\right]\\
    &\times \sigma(u(s,x-z'))\lr{\sigma(u(s,x-z))-\sigma(u(s,y-z))}\ud s\Bigg\|_{p/2}\\
    \le & \frac{1}{2}\Norm{\int_0^t\int_{\R^3}\int_{\R^3}G_{t-s}(\ud z)G_{t-s}(\ud z')|w|^{\gamma_1}|f(z-z'+w)-f(z-z')||\sigma(u(s,x-z'))|^2\ud s}_{p/2}\\
    &+\frac{1}{2}L_{\sigma}^2\Bigg\|\int_0^t\int_{\R^3}\int_{\R^3}G_{t-s}(\ud z)G_{t-s}(\ud z')\frac{|f(z-z'+w)-f(z-z')|}{|w|^{\gamma_1}}\\
    &\quad\quad\quad\quad\times |u(s,x-z)-u(s,y-z)|^2\ud s\Bigg\|_{p/2}\\
    \eqqcolon& Q_{21}+Q_{22}\,,
\end{align*}
For both $Q_{21}$ and $Q_{22}$, we apply the convolution property~\eqref{E:FirstOrderDifference} and the Lipschitz continuity of $\sigma$ to get
\begin{align*}
    Q_{21}\le &C|w|^{\gamma_1} L_{\sigma}^2\int_0^t\int_{|z|\le 2\lr{t-s}}\frac{|f(z+w)-f(z)|}{|z|}\ud z\lr{1+\sup_{x\in\R^3}\Norm{u(s,x)}_p^2}\ud s\\
    \le &C|w|^{2\gamma_1}L_{\sigma}^2\int_0^t(t-s)^{\nu_1}\sup_{x\in\R^3}\Norm{u(s,x)}_p^2\ud s\,,
\end{align*}
and
\begin{align*}
    Q_{22}\le &CL_{\sigma}^2\int_0^t\int_{|z|\le 2(t-s)}\frac{|f(z+w)-f(z)|}{|w|^{\gamma_1}|z|}\ud z\sup_{|\xi-\xi'|=|w|}\Norm{u(s,\xi)-u(s,\xi')}_p^2\ud s\\
    \le &CL_{\sigma}^2\int_0^t \lr{t-s}^{\nu _1}\sup_{|\xi-\xi'|=|w|}\Norm{u(s,\xi )-u(s,\xi ')}_p^2\ud s\,.
\end{align*}
In $Q_{21}$ we have bounded $\frac{|\sigma(0)|}{L_{\sigma}}$ by a constant since finally in this paper $L_{\sigma}$ will be large, see \eqref{E:CutoffLip}. 
Consequently, we have
\begin{equation}\label{E_:Q_2}
    \begin{aligned}
         Q_2\le& C|w|^{2\gamma_1}L_{\sigma}^2\int_0^t(t-s)^{\nu_1}\lr{1+\sup_{x\in\R^3}\Norm{u(s,x)}_p^2}\ud s\\
    &+CL_{\sigma}^2\int_0^t \lr{t-s}^{\nu _1}\sup_{|\xi-\xi'|=|w|}\Norm{u(s,\xi )-u(s,\xi ')}_p^2\ud s\,.
    \end{aligned}
\end{equation}
A similar estimate also works for $Q_3$ and we get
\begin{equation}\label{E_:Q_3}
    \begin{aligned}
         Q_3\le& C|w|^{2\gamma_1}L_{\sigma}^2\int_0^t(t-s)^{\nu_1}\lr{1+\sup_{x\in\R^3}\Norm{u(s,x)}_p^2}\ud s\\
    &+CL_{\sigma}^2\int_0^t \lr{t-s}^{\nu _1}\sup_{|\xi-\xi'|=|w|}\Norm{u(s,\xi )-u(s,\xi ')}_p^2\ud s\,.
    \end{aligned}
\end{equation}
For $Q_4$, we apply the convolution property~\eqref{L:KernelConv} and the assumption~\eqref{E:SecondOrderDifference} to find
\begin{equation}\label{E_:Q_4}
    \begin{aligned}
        Q_4=&\Bigg\|\int_0^t\int_{\R^3}\int_{\R^3}G_{t-s}(\ud z)G_{t-s}(\ud z')\sigma(u(s,z))\sigma(u(s,z'))\\
    &\times \left[2f(z-z')-f(z-z'+w)-f(z-z'-w)\right]\ud s\Bigg\|_{p/2}\\
    \le &CL_{\sigma}^2 \int_0^t\int_{|z|\le 2\lr{t-s}}\frac{|2f(z)-f(z+w)-f(z-w)|}{|z|}\ud z\lr{1+\sup_{x\in\R^3}\Norm{u(s,x)}_p^2}\ud s\\
    \le & CL_{\sigma}^2|w|^{2\gamma_2}\int_0^t\lr{t-s}^{\nu_2}\lr{1+\sup_{x\in\R^3}\Norm{u(s,x)}_p^2} \ud s \,.
    \end{aligned}
\end{equation}
Gathering the bounds in~\eqref{E_:PEst}--\eqref{E_:Q_4} and substituting them into~\eqref{E_:SpatialHolder}, we obtain
\begin{align*}
    \Norm{u(t,x)-u(t,y)}_p^2\le &3|V(t,x)-V(t,y)|^2\\
    &+3L_{b}^2\left(\int_0^t\lr{t-s}\sup_{|\xi-\xi'|=|w|}\Norm{u(s,\xi)-u(s,\xi')}_p\ud s\right)^2\\
    &+cpL_{\sigma}^2\int_0^t \lr{(t-s)^\nu+(t-s)^{\nu_1}}\sup_{|\xi-\xi'|=|w|}\Norm{u(s,\xi)-u(s,\xi')}_p^2\ud s\\
    &+cpL_{\sigma}^2|w|^{2\gamma_1}\int_0^t(t-s)^{\nu_1}\lr{1+\sup_{x\in\R^3}\Norm{u(s,x)}_p^2}\ud s\\
    &+cpL_{\sigma}^2|w|^{2\gamma_2}\int_0^t\lr{t-s}^{\nu_2}\lr{1+\sup_{x\in\R^3}\Norm{u(s,x)}_p^2} \ud s\,.
\end{align*}
For the last two terms on the right-hand side, we recall the moment estimate~\eqref{E:MomentBdInfinity}. For $i=1,2$, we obtain
\begin{align*}
    pL_{\sigma}^2\int_0^t(t-s)^{\nu_i}\lr{1+\sup_{x\in\R^3}\Norm{u(s,x)}_p^2}\ud s\le &CpL_{\sigma}^2\int_0^t(t-s)^{\nu_i}Me^{c\beta s}\ud s\\
    \le &CpL_{\sigma}^2Me^{c\beta t}\int_0^\infty s^{\nu_i}e^{-c\beta s}\ud s\\
    \le &CpL_{\sigma}^2 Me^{c\beta t}\frac{\Gamma(\nu_i+1)}{\beta^{\nu_i+1}}\\
    \le& CMe^{c\beta t}
\end{align*}
from our choice of $\beta$ (see Remark~\ref{R:Beta_0}). As a result, combining all the estimates and denoting by 
$$g(s,w)^2=\sup_{|\xi-\xi'|=|w|}\Norm{u(s,\xi)-u(s,\xi')}_p^2,
$$
we have that 
\begin{equation}
\begin{aligned}
\|u(t,x) - u(t,y)\|_p^2 \leq& CM e^{c\beta t} |w|^{2\bar{\gamma}} + 3 L_b^2 \left(\int_0^t (t-s)g(s,w)ds\right)^2\\
& + Cp L_{\sigma}^2 \int_0^t [(t-s)^{\nu} + (t-s)^{\nu_1}] g^2(s,w)ds\,,
\end{aligned}
\end{equation}
thus, by taking the sup of $s$ on $[0,t]$ both sides we get
\begin{equation}
\begin{aligned}
\sup_{s\leq t}e^{-2\beta s} g^2(s,w) \leq & C M e^{c\beta t} |w|^{2\bar{\gamma}} + \frac{3L_b^2}{\beta^4} \sup_{s\leq t} e^{-2\beta s} g^2(s,w)\\
& + C p L_{\sigma}^2 \left(\frac{1}{\beta^{\nu+1}} + \frac{1}{\beta^{\nu_1+1}}\right) \sup_{s \leq t} e^{-2\beta s} g^2(s,w)\,.
\end{aligned}
\end{equation}
If we choose 
$$
\beta = C (\sqrt{L_b} + p^{\frac{1}{\bar{\nu}+1}}L_{\sigma}^{\frac{2}{\bar{\nu}+1}})
$$
for some $C$ large but fixed and independent of $w, t$, we obtain that 
$$
\sup_{s\leq t} e^{-2\beta s} g^2(s,w)\leq M |w|^{2\bar{\gamma}}\,,
$$
which leads to the proof. 
\end{proof}

\bigskip

\subsection{Temporal increment}
We can get the temporal increment estimate based on the spatial increment. The key ingredient of the proof is the scaling property~\eqref{E:Scaling}.
\begin{proposition}\label{P:TemptInc}
    Suppose the initial values $u_0$, $v_0$ and the spatial covariance function $f$ satisfy the assumptions of Theorem~\ref{T:MainTheorem}. Let $u$ be the solution to~\eqref{E:SWE}. Then, there exist constants $c,C>0$ such that for all $p\ge 2$, $T>0$, $0\le t\le  t'\le T$, $t'-t<1$ and $x\in\R^3$,
    \begin{align}\label{E:TempIncResult}
    \Norm{u (t',x)-u (t,x)}_p^2\le& C\left[1+\lr{L_{b}^2+pL_{\sigma }^2}Me^{c\beta t'}\right]|t'-t|^{2\mu}\,,
\end{align}
where $0<\mu<\overline{\mu}=\frac{1}{2}\min \{2\mu_2,\nu+1,2\overline{\gamma},\overline{\gamma}+\mu_1\}$ with $\bar\gamma = \min\{{\alpha}_1, \alpha_2, \alpha_3, \gamma_1, \gamma_2\}$, $\beta={\sqrt{L_{b }}+p^{\frac{1}{\overline{\nu}+1}}L_{\sigma }^{\frac{2}{\overline{\nu}+1}}}$ with $\bar{\nu} = \min\{\nu, \nu_1, \nu_2\}$,   and $L_b, L_\sigma$ are the Lipschitz constants of $b$ and $\sigma$ respectively, $M$ is defined in \eqref{E:DefM}. 
\end{proposition}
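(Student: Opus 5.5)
Set $h=t'-t\in(0,1)$. The plan is to write $u(t',x)-u(t,x)$ through the mild formulation~\eqref{E:MildSolu} and estimate three pieces separately: the deterministic part $V(t',x)-V(t,x)$, the drift difference, and the stochastic-integral difference. Unlike the spatial case, no Gronwall-type closure is needed. Every increment of $u$ that appears will be a spatial increment after rescaling, so it can be controlled by Proposition~\ref{P:SpatInc}. The moments of $u$ are controlled by Proposition~\ref{P:GlobalLip}, and Remark~\ref{R:Beta_0} lets us use $\beta$ in place of $\beta_0$.

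For the deterministic part, use $V(t,x)=(G_tv_0)(x)+\frac{d}{dt}(G_tu_0)(x)$. By the scaling~\eqref{E:Scaling}, write $G_tv_0(x)=t\int v_0(x-t\xi)G_1(\ud\xi)$. Its increment in $t$ is then bounded by $Ch\|v_0\|_\infty+C|h|^{\alpha_3}$. The same idea applies to $\frac{d}{dt}(G_tu_0)$ via the identity quoted in the proof of Proposition~\ref{P:SpatInc}: the $u_0$-term and the $\Delta u_0$-term give $h^{\alpha_1}$ and $h^{\alpha_2}$. A careful treatment of the factor $1/t$ is needed here; rewriting $\frac1t\int u_0(x-z)G_t(\ud z)=\int u_0(x-t\xi)G_1(\ud\xi)$ removes it. Altogether we get $C h^{2\bar\gamma}$.

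For the drift and noise parts, split each integral as $\int_t^{t'}$ plus $\int_0^t$ with kernel $G_{t'-s}-G_{t-s}$. The pieces over $[t,t']$ are handled by the moment bound. The drift gives $L_b^2(\int_t^{t'}(t'-s)\ud s)^2 Me^{c\beta t'}\lesssim L_b^2Me^{c\beta t'}h^4$. The noise gives, by BDG, Lemma~\ref{L:KernelConv} and~\eqref{E:ZeroOrderDifference}, $pL_\sigma^2 Me^{c\beta t'}\int_t^{t'}(t'-s)^\nu\ud s\lesssim pL_\sigma^2Me^{c\beta t'}h^{\nu+1}$. This is the source of the exponent $\nu+1$ in $\bar\mu$.

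For the pieces over $[0,t]$, set $r=t-s$ and rescale: $G_{r+h}(\ud z)$ is $(r+h)$ times the push-forward of $G_1$ under $\xi\mapsto(r+h)\xi$. For the drift,
\[
\int (G_{r+h}-G_r)(x-\ud z)\,b(u)=\int\Bigl[(r+h)\,b\bigl(u(s,x-(r+h)\xi)\bigr)-r\,b\bigl(u(s,x-r\xi)\bigr)\Bigr]G_1(\ud\xi).
\]
This splits into an $h\,b(u)$ term and an $r\,[b(u(s,x-(r+h)\xi))-b(u(s,x-r\xi))]$ term. The latter is a spatial increment of size $h|\xi|=h$, so Proposition~\ref{P:SpatInc} gives $L_b^2 Me^{c\beta t}h^{2\bar\gamma}$.

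For the stochastic term, BDG produces a double integral against $G_1(\ud\xi)G_1(\ud\eta)$ involving
\[
\Bigl[(r+h)\Sigma\bigl((r+h)\xi\bigr)-r\Sigma(r\xi)\Bigr]\Bigl[(r+h)\Sigma\bigl((r+h)\eta\bigr)-r\Sigma(r\eta)\Bigr]
\]
evaluated against $f$ at the various combinations $(r+h)\xi-(r+h)\eta$, $r\xi-(r+h)\eta$, and so on. Expand this exactly as the $h_1,\dots,h_4$ decomposition in Proposition~\ref{P:SpatInc}, adding and subtracting so that each term is one of three types:
\begin{enumerate}
\item[(i)] the product of two increments of $\sigma(u)$ against $f$, bounded by Proposition~\ref{P:SpatInc} and~\eqref{E:ZeroOrderDifference}, giving $h^{2\bar\gamma}$;
\item[(ii)] one increment of $\sigma(u)$ times a first difference of $f$ in the radial variable, bounded by Cauchy--Schwarz/Young, the moment bound and~\eqref{E:FirstOrderDifference_2}, giving $h^{\bar\gamma+\mu_1}$;
\item[(iii)] the mixed second difference of $f$ times $\sigma(u)\sigma(u)$, bounded by~\eqref{E:SecondOrderDifference_2}, giving $h^{2\mu_2}$.
\end{enumerate}
Terms carrying an explicit factor $h$ in place of $r$ contribute $h^2$, which is harmless since $\bar\mu\le1$. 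Collecting everything gives $C[1+(L_b^2+pL_\sigma^2)Me^{c\beta t'}]h^{2\min\{\cdots\}}$. Any $\mu<\bar\mu$ is then admissible, with room to absorb logarithmic losses or constants from the sup over $s$.

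The main obstacle is the stochastic term over $[0,t]$. There the sign pattern of the $f$-differences must match exactly the forms in~\eqref{E:FirstOrderDifference_2}--\eqref{E:SecondOrderDifference_2}, with weights $s$ and $s^2$. Also, the type (ii) cross terms must be split with the right Young exponents to land on $h^{\bar\gamma+\mu_1}$ rather than something worse. I would first write the expansion in the form of~\cite{hu.huang.ea:14:on}, where the same scaling trick is carried out for the linear equation, and then insert the Lipschitz and moment bounds term by term.
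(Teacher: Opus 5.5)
Your plan is essentially the paper's proof. It uses the same decomposition into the initial-data term, the drift and noise pieces over $[t,t']$ (moment bound, giving $h^4$ and $h^{\nu+1}$), and the pieces over $[0,t]$ treated by the scaling \eqref{E:Scaling}. The noise piece is expanded exactly as the paper's $R_1,\dots,R_4$ (your types (i)--(iii)), and every increment of $u$ becomes a spatial increment of size $h$ controlled by Proposition~\ref{P:SpatInc}. The differences are cosmetic. You estimate $V(t',x)-V(t,x)$ directly rather than quoting Lemma 4.9 of Dalang--Sanz-Sol\'e. For the cross terms, the paper simply uses H\"older, $\|\sigma(u)\,\Delta\sigma(u)\|_{p/2}\le\|\sigma(u)\|_p\|\Delta\sigma(u)\|_p$, so no Young splitting is needed.

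One intermediate claim is wrong, though it does not affect the conclusion: the terms with an explicit factor $h$ do not all contribute $h^2$.
\begin{itemize}
\item The term $h^2 f\bigl((r+h)(\xi-\eta)\bigr)\sigma(u)\sigma(u)$ is bounded, via Lemma~\ref{L:KernelConv} and \eqref{E:ZeroOrderDifference}, by $h^2\int_0^t (r+h)^{\nu-2}\,\mathrm{d}r$. Because of the singularity at $r=0$, this is of order $h^{1+\nu}$ when $\nu<1$ (e.g.\ Riesz kernels with $\beta>1$) and of order $h^2\log(1/h)$ when $\nu=1$.
\item The $rh$-weighted first differences of $f$ multiplying $\sigma(u)\sigma(u)$ need \eqref{E:FirstOrderDifference_2} to give $h^{1+\mu_1}$. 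The crude bound only gives $h$, which is insufficient once $\bar\mu>1/2$.
\item The $h(r+h)f\,\sigma(u)\,\Delta\sigma(u)$ term gives $h^{1+\bar\gamma}$.
\end{itemize}
All of these are still $O(h^{2\mu})$ for $\mu<\bar\mu$, so your final bound stands. However, this piece, and not only the noise over $[t,t']$, is a source of the exponent $\nu+1$ in $\bar\mu$. Its logarithm at $\nu=1$ is the loss that the strict inequality $\mu<\bar\mu$ absorbs.
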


\begin{remark}
    As can be seen from the proof of \eqref{E: R_4} below we can take $\mu=\overline{\mu}$ in the above proposition given that $\overline{\mu}<1$. That is, at least one of the parameters $\alpha_1,\alpha_2,\alpha_3,\nu,\gamma_1,\gamma_2,\mu_1,\mu_2$ does not equal to $1$.
\end{remark}
\begin{proof}

We begin with the same treatment as in Section~\ref{S:SpatInc}. We have
\begin{align}\label{E_:TempAll}
    \notag \Norm{u (t',x)-u (t,x)}_p^2\le& 3V(t',t,x)^2+3\Bigg\|\int_0^t\int_{\R^3}G_{t-s}(x-\ud z)b \lr{u (s,z)}\ud s\\
    \notag &\qquad\qquad\qquad\qquad-\int_0^{t'}\int_{\R^3}G_{t'-s}\lr{x-\ud z}b \lr{u (s,z)}\ud s\Bigg\|_p^2\\
    \notag&+3\Bigg\|\int_0^t\int_{\R^3}G_{t-s}(x-z)\sigma \lr{u (s,z)}W(\ud s,\ud z)\\
    \notag &\qquad\quad-\int_0^{t'}\int_{\R^3}G_{t'-s}\lr{x-z}\sigma \lr{u (s,z)}W(\ud s,\ud z)\Bigg\|_p^2\\
    \leq& 3V(t',t,x)^2+6S_{1}+6S_{2}+6S_3+6S_4\,,
\end{align}
where
\begin{align*}
    V(t',t,x)^2=&\left|\lr{G_tv_0}(x)-\lr{G_{t'}v_0}(x)+\frac{\ud }{\ud t}\lr{G_tu_0}(x)-\frac{\ud }{\ud t}\lr{G_{t'}u_0}(x)\right|^2\,,\,\\
    S_1=&\Norm{\int_t^{t'}\int_{\R^3}G_{t'-s}(x-\ud z)b \lr{u (s,z)}\ud s}_p^2\,,\\
    S_2=&\Norm{\int_0^t\int_{\R^3}\left[G_{t'-s}(x-\ud z)-G_{t-s}(x-\ud z)\right]b \lr{u (s,z)}\ud s}_p^2\,,\\
    S_3=&\Norm{\int_t^{t'}\int_{\R^3}G_{t'-s}(x-z)\sigma \lr{u (s,z)}W(\ud s,\ud z)}_p^2\,,\\
    S_4=&\Norm{\int_0^t\int_{\R^3}\left[G_{t'-s}(x-z)-G_{t-s}(x-z)\right]\sigma \lr{u (s,z)}W(\ud s,\ud z)}_p^2\,.
\end{align*}
We first apply \cite[Lemma~4.9]{dalang.sanz-sole:09:holder-sobolev} to obtain
\begin{align*}
    V(t',t,x)^2\le C\lr{(t'-t)^{2\alpha_2}+(t'-t)^{2\alpha_3}}\,.
\end{align*}
For $S_1$, we apply the linear growth property of $b $ and the moment estimate~\eqref{E:MomentBdInfinity} to find
\begin{align*}
    S_1\le& \lr{\int_t^{t'}\int_{\R^3}G_{t'-s}(x-\ud z)\lr{\left|b (0)\right|+L_{b }\Norm{u (s,z)}_p }\ud s}^2\\
    \le & b (0)^2(t'-t)^4+2L_{b }^2\lr{\int_t^{t'}(t'-s)\sup_{z\in\R^3}\Norm{u (s,z)}_p\ud s}^2\\
    \le &b (0)^2(t'-t)^4+CL_{b }^2(t'-t)^4 e^{c\beta t'}\,.
\end{align*}
According to the scaling property~\eqref{E:Scaling}, we see that
\begin{align*}
    S_2=&\Bigg\|\int_0^t\int_{\R^3}G_{t'-s}(\ud z)b \lr{u (s,x-z)}\ud s
    -\int_0^t\int_{\R^3}G_{t-s}(\ud z)b \lr{u (s,x-z)}\ud s\Bigg\|_p^2\\
    = &\Bigg\|\int_0^t\int_{\R^3}\lr{t'-s}G_{1}\lr{\frac{\ud z}{t'-s}}b \lr{u (s,x-z)}\ud s\\
    &-\int_0^t\int_{\R^3}\lr{t-s}G_{1}\lr{\frac{\ud z}{t-s}}b \lr{u (s,x-z)}\ud s\Bigg\|_p^2\\
    =&\Bigg\|\int_0^t\int_{\R^3}\lr{t'-s}G_{1}({\ud z})b \lr{u (s,x-(t'-s)z)}\ud s\\
    &-\int_0^t\int_{\R^3}\lr{t-s}G_{1}(\ud z)b \lr{u (s,x-(t-s)z)}\ud s\Bigg\|_p^2\\
    \le& 2\Norm{\int_0^t\int_{\R^3}(t'-t)G_1(\ud z)b \lr{u (s,x-(t'-s)z)}\ud s}_p^2\\
    &+2\Norm{\int_0^t\int_{\R^3}(t-s)G_1(\ud z)\lr{b \lr{u (s,x-(t'-s)z)}-b \lr{u (s,x-(t-s)z)}}\ud s}_p^2\\
    =&2S_{2,1}+2S_{2,2}\,.
\end{align*}
Using the moment estimate~\eqref{E:MomentBound}, it holds that
\begin{align*}
    S_{2,1}\le& (t'-t)^2\lr{\int_0^t \int_{\R^3}G_1(\ud z)\lr{|b (0)|+L_{b }\Norm{u (s,x-(t'-s)z)}_p}\ud s}^2\\
    \le &2T^2{b (0)}^2(t'-t)^2+CT^2(t'-t)^2L_{b }^2M{e^{c\beta t}}\,.
\end{align*}
For the second term $S_{2,2}$, we apply Proposition~\ref{P:SpatInc} to obtain
\begin{align*}
    S_{2,2}\le &L_{b }^2\lr{\int_0^t (t-s)\sup_{|\xi-\xi'|=|t'-t|}\Norm{u (s,\xi)-u (s,\xi')}_p\ud s}^2\\
    \le &CT^4L_{b }^2(t'-t)^{2\overline{\gamma}}Me^{c\beta t}\,.
\end{align*}
Thus, we combine the estimates for $S_{2,1}$ and $S_{2,2}$ to get
\begin{align*}
    S_2\le CT^2(b(0)^2+L_b^2Me^{c\beta t})(t'-t)^2+CT^4ML_b^2e^{c\beta t}(t'-t)^{2\bar{\gamma}}\,.
\end{align*}
For $S_3$, by Burkholder-Davis-Gundy inequality, Lemma \ref{L:KernelConv} and \eqref{E:ZeroOrderDifference}, proceeding as before, we obtain that 
\begin{align*}
    S_3\le& CpL_{\sigma }^2\int_t^{t'}\int_{|z|\le 2(t'-s)}\frac{f(z)}{|z|}\ud z \sup_{z\in\R^3}\Norm{u (s,z)}_p^2\ud s\\
    \le & CpL_{\sigma }^2\int_t^{t'}\lr{t'-s}^\nu \sup_{z\in\R^3}\Norm{u (s,z)}_p^2\ud s\\
    \le & CpL_{\sigma}^2\frac{(t'-t)^{\nu+1}}{\nu+1}Me^{c\beta t'}\,.
\end{align*}
For $S_4$, let $t'-t = h$, another application of Burkholder-Davis-Gundy inequality shows that
\begin{align*}
    S_4\le&Cp\bigg\|\int_0^t\int_{\R^3}\int_{\R^3}[G_{t'-s}(x-\ud z)-G_{t-s}(x-\ud z)][G_{t'-s}(x-\ud z')-G_{t-s}(x-\ud z')]\\
    &\quad\quad\quad\times\sigma(u(s,z))\sigma(u(s,z')) f(z-z')\ud s\bigg\|_{\frac{p}{2}}\\
    \le &Cp\bigg\|\int_0^t\ud s\int_{\R^3}\int_{\R^3} G_{s+h}(\ud z) G_{s+h}(\ud z')\sigma(u(t-s,x-z))\sigma(u(t-s,x-z'))f(z-z')\\
    &\quad\quad\quad\quad\quad- G_{s+h}(\ud z) G_{s}(\ud z')\sigma(u(t-s,x-z))\sigma(u(t-s,x-z'))f(z-z')\\
    &\quad\quad\quad\quad\quad- G_{s}(\ud z) G_{s+h}(\ud z')\sigma(u(t-s,x-z))\sigma(u(t-s,x-z'))f(z-z')\\
    &\quad\quad\quad\quad\quad+ G_{s}(\ud z) G_{s}(\ud z')\sigma(u(t-s,x-z))\sigma(u(t-s,x-z'))f(z-z')\bigg\|_{\frac{p}{2}}
\end{align*}
By noticing that both $z$ and $z'$ are supported on a sphere, we make use of the change of coordinates $\xi=\frac{z}{|z|}$ and $\eta=\frac{z'}{|z'|}$. Let $A(\ud \xi)$ and $A(\ud \eta)$ be the uniform measure in $S^2$, so we obtain the following identities.
\begin{align*}
    G_s(\ud z)=&\frac{s}{4\pi }A(\ud \xi)\,,\\
    G_s(\ud z')=&\frac{s}{4\pi }A(\ud \eta)\,.
\end{align*}
With this change of variable we obtain
\begin{align*}
    S_4\le &Cp\bigg\|\int_0^t \ud s\int_{\R^3}\int_{\R^3} (s+h)^2\sigma(u(t-s,x-(s+h)\xi))\\
    &\quad\quad\quad\quad\quad\quad\quad\times\sigma(u(t-s,x-(s+h)\eta))f((s+h)\xi-(s+h)\eta)\\
    &\quad\quad\quad\quad\quad- s(s+h)\sigma(u(t-s,x-(s+h)\xi))\sigma(u(t-s,x-s\eta))f((s+h)\xi-s\eta)\\
    &\quad\quad\quad\quad\quad- s(s+h)\sigma(u(t-s,x-s\xi))\sigma(u(t-s,x-(s+h)\eta))f(s\xi-(s+h)\eta)\\
    &\quad\quad\quad\quad\quad+ s^2\sigma(u(t-s,x-s\xi))\sigma(u(t-s,x-s\eta))f(s\xi-s\eta)A(\ud \xi)A(\ud \eta)\bigg\|_{\frac{p}{2}}\,.
\end{align*}
In order to apply the Lipschitz condition of $\sigma$ and Assumption~\ref{A:CovAssumption}, we insert intermediate terms to separate the integral in $S_4$ into four parts.
\begin{align*}
    S_4\le \sum_{i=1}^4 R_i\,,
\end{align*}
where
\begin{align*}
    R_1=&Cp\bigg\|\int_0^t\int_{S^2\times S^2}\lr{s+h}^2f\lr{(s+h)\xi-(s+h)\eta}\\
    &\quad\quad\times\left|\sigma \lr{u (t-s,x-(s+h)\xi)}-\sigma \lr{u (t-s,x-s\xi)}\right|\\
    &\quad\quad\times \left|\sigma \lr{u (t-s,x-(s+h)\eta)}-\sigma \lr{u (t-s,x-s\eta)}\right|A(\ud \xi)A(\ud \eta)\ud s\bigg\|_\frac{p}{2}\,,\\
    R_2=& Cp \bigg\|\int_0^t\int_{S^2\times S^2}\left|(s+h)^2f\lr{(s+h)\xi-(s+h)\eta}-s(s+h)f\lr{s\xi-(s+h)\eta}\right|\\
    &\quad\quad\times \left|\sigma \lr{u (t-s,x-(s+h)\eta)}-\sigma \lr{u (t-s,x-s\eta)}\right|\\
    &\quad\quad\times|\sigma \lr{u \lr{t-s,x-s\xi}}|A(\ud \xi )A(\ud \eta)\ud s\bigg\|_{\frac{p}{2}}\,,\\
    R_3=&Cp\bigg\|\int_0^t\int_{S^2\times S^2}\left|\lr{s+h}^2f\lr{\lr{s+h}\xi-\lr{s+h}\eta}-s(s+h)f\lr{\lr{s+h}\xi-s\eta}\right|\\
    &\quad\quad\times \left|\sigma \lr{u \lr{t-s,x-\lr{s+h}\xi}}-\sigma \lr{u \lr{t-s,x-s\xi}}\right|\\
    &\quad\quad\times |\sigma\lr{u \lr{t-s,x-s\eta}}|A(\ud \xi)A(\ud \eta)\ud s\bigg\|_{\frac{p}{2}}\,,\\
    R_4 =& Cp\bigg\|\int_0^t \int_{S^2 \times S^2} \Big| (s+h)^2 f((s+h)\xi - (s+h)\eta) - s(s+h)f(s\xi - (s+h)\eta)  \\
    &\qquad\qquad\qquad\quad- s(s+h) f((s+h)\xi - s\eta) + s^2 f(s\xi - s\eta) \Big| \\
    &\quad\quad\quad\quad\quad\times |\sigma\lr{u \lr{t-s,x-s\xi}}\sigma \lr{u \lr{t-s,x-s\eta}}| A(\ud\xi) A(\ud\eta) ds\bigg\|_{\frac{p}{2}}\,.
\end{align*}
We can apply the Lipschitz property of $\sigma $ as well as the convolution property Lemma~\ref{L:KernelConv} to obtain
\begin{align*}
    R_1\le& Cp\Bigg\|\int_0^t\int_{\R^3\times \R^3}f\lr{z-z'}G_{s+h}(\ud z)G_{s+h}(\ud z')\\
    &\quad\quad\quad\times\left[\sigma \lr{u (t-s,x-z)}-\sigma \lr{u (t-s,x-\frac{s}{s+h}z)}\right]\\
    &\quad\quad\quad\times \left[\sigma \lr{u (t-s,x-z')}-\sigma \lr{u (t-s,x-\frac{s}{s+h}z')}\right]\ud s\Bigg\|_\frac{p}{2}\\
    \le & CpL_{\sigma }^2h^{2\overline{\gamma}} \int_0^t e^{c\beta(t-s)}\int_{|z|\le 2(s+h)}\frac{f(z)}{|z|}\ud z\ud s\le {CpL_{\sigma }^2h^{2\overline{\gamma}}  }e^{c\beta t}\,.
\end{align*}
Split $R_2$ into two terms to get
\begin{align*}
    R_2\le& Cp\Bigg\|\int_0^t\int_{S^2\times S^2}s(s+h)\left|f\lr{(s+h)\xi-(s+h)\eta}-f\lr{s\xi-(s+h)\eta}\right|\\
    &\quad\quad\times \left|\sigma \lr{u (t-s,x-(s+h)\eta)}-\sigma \lr{u (t-s,x-s\eta)}\right|\\
    &\quad\quad\times\left|\sigma \lr{u \lr{t-s,x-s\xi}}\right|A(\ud \xi )A(\ud \eta)\ud s\Bigg\|_{\frac{p}{2}}\\
    &+Cph\Bigg\|\int_0^t\int_{S^2\times S^2}(s+h)f\lr{(s+h)\xi-(s+h)\eta}\left|\sigma \lr{u (t-s,x-s\xi)}\right|\\
    &\quad\quad\quad\quad\times \left|\sigma \lr{u (t-s,x-(s+h)\eta)}-\sigma \lr{u (t-s,x-s\eta)}\right|A(\ud \xi)A(\ud \eta)\ud s\Bigg\|_{\frac{p}{2}}\\
    \eqqcolon& R_{2,1}+R_{2,2}\,.
\end{align*}
For $R_{2,1}$, we can apply Proposition~\ref{P:SpatInc} with parameter $\gamma=\bar{\gamma}$ together with~\eqref{E:FirstOrderDifference_2} to get
\begin{align*}
    R_{2,1}\le&  CpL_{\sigma }^2e^{c\beta t}h^{\overline{\gamma}}\\
    &\times \int_0^t\int_{S^2\times S^2} s\left|f\lr{(s+h)\xi-(s+h)\eta}-f\lr{(s\xi-(s+h)\eta)}\right|A(\ud \xi )A(\ud \eta)\ud s\\
    \le&  CpL_{\sigma }^2e^{c\beta t}h^{\overline{\gamma}+\mu_1}\,.
\end{align*}
Again by Proposition~\ref{P:SpatInc} and~\eqref{E:ZeroOrderDifference}, we have
\begin{align*}
    R_{2,2}\le &  CpL_{\sigma }^2e^{c\beta t}h^{\overline{\gamma}+1}\\
    &\times\int_0^t \frac{1}{s + h} \iint_{\mathbb{R}^3 \times \mathbb{R}^3} f(y - z) G_{s+h}(\ud y) G_{s+h}(\ud z) \, \ud s   \\
    =&CpL_{\sigma }^2e^{c\beta t}h^{\overline{\gamma}+1} \int_0^t \frac{1}{s + h} \int_{|z| \leq 2(s + h)} \frac{f(z)}{|z|} \, \ud z \, \ud s   \\
    \le& CpL_{\sigma }^2e^{c\beta t}h^{\overline{\gamma}+1}\int_0^T (s+h)^{\nu-1}\ud s\le CpL_{\sigma }^2e^{c\beta t}h^{\overline{\gamma}+1}\,.
\end{align*}
Thus we obtain that 
\begin{equation}\label{E: R_2}
R_2 \leq C p L_{\sigma}^2 e^{c\beta t} (h^{\bar{\gamma}+\mu_1} + h^{\bar{\gamma}+1})\,.
\end{equation}
The estimate for $R_3$ is the same as $R_2$. For $R_4$, we first use the moment estimate~\eqref{E:MomentBound} to get
\begin{align*}
    R_4\le& Cp\lr{|\sigma (0)|^2+L_{\sigma }^2Me^{c\beta t}}\\
    &\times\int_0^t\int_{S^2\times S^2}\Bigg[\lr{s+h}^2f\lr{(s+h)\xi-(s+h)\eta}-s(s+h)f\lr{s\xi-(s+h)\eta}\\
    &-s(s+h)f\lr{(s+h)\xi-s\eta}+s^2f\lr{s\xi-s\eta}\Bigg]A(\ud \xi)A(\ud \eta)\ud s\,.
\end{align*}
By writing $(s+h)^2=s^2+sh+sh+h^2$, we split the integral into four terms,
\begin{align*}
    &\int_0^t\int_{S^2\times S^2}\Bigg(\lr{s+h}^2f\lr{(s+h)\xi-(s+h)\eta}-s(s+h)f\lr{s\xi-(s+h)\eta}\\
    &\quad\quad\quad-s(s+h)f\lr{(s+h)\xi-s\eta}+s^2f\lr{s\xi-s\eta}A(\ud \xi)A(\ud \eta)\ud s\\
    &\le \int_0^t\int_{S^2\times S^2}s^2\Big|f\lr{(s+h)\xi+(s+h)\eta}-f\lr{s\xi+(s+h)\eta}\\
    &\qquad\quad-f\lr{(s+h)\xi+s\eta}+f(s\xi+s\eta)\Big|A(\ud \xi)A(\ud \eta)\ud s\\
    &\quad+\int_0^t\int_{S^2\times S^2} sh\left|f\lr{(s+h)\xi+(s+h)\eta}-f\lr{s\xi+(s+h)\eta}\right|A(\ud \xi )A(\ud \eta)\ud s\\
    &\quad +\int_0^t \int_{S^2\times S^2}sh\left|f\lr{(s+h)\xi+(s+h)\eta}-f\lr{(s+h)\xi+s\eta}\right|A(\ud \xi )A(\ud \eta)\ud s\\
    &\quad +\int_0^t\int_{S^2\times S^2}h^2f\lr{(s+h)\xi-(s+h)\eta}A(\ud \xi)A(\ud \eta)\ud s\\
    &\eqqcolon \sum_{i=1}^4R_{4,i}\,.
\end{align*}
By assumptions~\eqref{E:FirstOrderDifference_2} and~\eqref{E:SecondOrderDifference_2}, we have
\begin{align*}
    R_{4,1}\le Ch^{2\mu_2},\quad R_{4,2}\le Ch^{\mu_1+1},\quad\text{and}\quad R_{4,3}\le Ch^{\mu_1+1}\,.
\end{align*}
For $R_{4,4}$, we change variables from $(s+h)\xi=z$ and $(s+h)\eta=z'$ back to $z$ and $z'$, and apply the convolution property~\eqref{L:KernelConv} to obtain
\begin{align*}
    R_{4,4}=&\int_0^t\int_{S^2\times S^2}h^2f\lr{(s+h)\xi-(s+h)\eta}A(\ud \xi)A(\ud \eta)\ud s\\
    =& Ch^2 \int_0^t\int_{|z|\le 2(s+h)}\frac{1}{(s+h)^2}\frac{f(z)}{|z|}\ud z\ud s\\
    \le &h^2\int_0^T (s+h)^{\nu-2}\ud s\,.
\end{align*}
We notice that
\begin{align*}
    h^2\int_0^T (s+h)^{\nu-2}\ud s= \begin{cases}
        \frac{h^2}{1-\nu}[h^{\nu-1}-(T+h)^{\nu-1}]\le Ch^{1+\nu},\quad\text{when $0<\nu<1$}\,,\\
        h^2[\log (T+h)-\log(h)]\le C h^{1+\nu^-} \ \text{$ \forall\  0<\nu^-<1$,}\quad\text{when $\nu=1$}\,,\\
        \frac{h^2}{\nu-1}[(T+h)^{\nu-1}-h^{\nu-1}]\le Ch^{2},\quad\text{when $\nu>1$}\,.
    \end{cases}
\end{align*}
Therefore, {for $0\le h<1$, it holds that $R_{4,4}\le C h^{(1+\nu)\wedge 2}$ when $\nu\ne1$ and $R_{4,4}\le C h^{1+\nu^-}$, for any $0<\nu^-<1$  when $\nu=1$}. 
Combining the estimates for $R_{4,i}$ for $i=1,2,3,4$ we get
\begin{equation}\label{E: R_4}
    R_4\le Cp\lr{\sigma (0)^2+L_{\sigma }^2Me^{c\beta t}}h^{{2\mu}}\,,
\end{equation}
where $0<2\mu< 2\mu_0\coloneqq\min\{2\mu_2,\mu_1+1,\nu+1\}$.
Finally, combining all the estimates above for $R_i$, $i = 1,2,3,4$, we obtain the temporal increment estimate
\begin{align*}
    \Norm{u (t',x)-u (t,x)}_p^2\le& C\Bigg[1+b (0)^2+p\sigma (0)^2+\lr{L_b^2+pL_{\sigma }^2}Me^{c\beta t'}\Bigg]|t'-t|^{2{\mu}}\,,
\end{align*}
where $0<2\mu<2\overline{\mu} \coloneqq\min \{2\mu_2,2\overline{\gamma},\nu+1,\overline{\gamma}+\mu_1\}$. The proof of Proposition~\ref{P:TemptInc} is completed by unifying the constants.
\end{proof}

\section{Proof of Theorem~\ref{T:MainTheorem}}\label{S:ProofMainThm}

The proof of Theorem~\ref{T:MainTheorem} is based on a stopping time argument which is adapted from~\cite{mueller:97:long}.
For any fixed $\lr{t,x}\in [0,T]\times \R^3$, we denote the cone $C(t,x)$ as
\begin{align*}
    C(t,x)=\{(s,y)\in [0,t]\times \R^3:|x-y|\le t-s\}\,.
\end{align*}
Without loss of generality, we consider the case when $x=0$. 
For some $K$ large enough which will be chosen later, let $N_n=K2^n$ for all $n\in \bbN$.  Let $b_{N_n}$ and $\sigma_{N_n}$ be the truncation of $b$ and $\sigma$ at level $N_n$, that is,
{
\begin{align}\label{E:TruncatedCoeff}
    b_N(z)=\begin{cases}
        b(z),\quad&|z|\le N\,,\\
        b(N),\quad& z>N\,,\\
        b(-N),\quad& z<-N\,,
    \end{cases}\quad\text{and}\quad\sigma_N(z)=\begin{cases}
        \sigma(z),\quad&|z|\le N\,,\\
        \sigma(N),\quad&z>N\,,\\
        \sigma(-N),\quad& z<-N\,.
    \end{cases}
\end{align}
Since $b$ and $\sigma$ are locally Lipschitz, $b_N$ and $\sigma_N$ are globally Lipschitz. We denote the Lipschitz constants of $b_N$ and $\sigma_N$ by $L_{b_N}$ and $L_{\sigma_N}$, respectively. Under the assumptions of $b$ and $\sigma$ in Theorem~\ref{T:MainTheorem}, we see that
\begin{equation}\label{E:CutoffLip}
    L_{b_N}\le C\lr{\log N}^{\theta_1} \quad\text{and}\quad L_{\sigma_N}\le C\lr{\log N}^{\theta_2}\,,
\end{equation}
for $N$ sufficiently large.}
Let $u_{N}$ be the corresponding solution with drift $b_N$ and diffusion coefficient $\sigma_N$. 
Consider a sequence of stopping times $\tau_n$ defined as
\begin{align}\label{E:Stoppingtimes}
    \tau_n\coloneqq \inf\left\{t\ge 0:\sup_{(s,y)\in C(t,0)}|u_{N_n}(s,y)|\ge N_n\right\}\,.
\end{align}
\begin{lemma}\label{L:StoppingTimeIncreasing}
    Let the stopping times $\tau_n$ be given by~\eqref{E:Stoppingtimes}. Then the sequence $\{\tau_n\}_{n=1}^\infty$ is nondecreasing.
\end{lemma}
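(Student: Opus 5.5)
The argument rests on one key fact. Before time $\tau_n$, inside the cone $C(t,0)$, the solutions $u_{N_n}$ and $u_{N_{n+1}}$ coincide. Once this is known, monotonicity follows directly.

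Suppose $t<\tau_n$. Then $\sup_{C(t,0)}|u_{N_n}|<N_n$. If $u_{N_{n+1}}=u_{N_n}$ on $C(t,0)$, then $\sup_{C(t,0)}|u_{N_{n+1}}|<N_n<N_{n+1}$, so $t<\tau_{n+1}$. Letting $t\uparrow\tau_n$ gives $\tau_n\le\tau_{n+1}$ a.s.

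\emph{Key step: the two solutions agree on the cone.} This is a local uniqueness statement, and it uses the finite speed of propagation of the wave equation. Since $G_{t-s}(x-\cdot)$ is supported on $\{|x-y|=t-s\}$, the value $u(t,x)$ given by \eqref{E:MildSolu} depends only on the noise and the coefficients on $C(t,x)$. Moreover, $(s,y)\in C(t,0)$ implies $C(s,y)\subset C(t,0)$.

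Set $D(s,y)=u_{N_{n+1}}(s,y)-u_{N_n}(s,y)$. On the event $\{s<\tau_n\}$ with $(s,y)\in C(t,0)$, we have $|u_{N_n}(s,y)|<N_n$. There $b_{N_n}(u_{N_n})=b_{N_{n+1}}(u_{N_n})$, and likewise for $\sigma$, because both truncations equal $b$ and $\sigma$ on $[-N_n,N_n]$. Hence
\[
b_{N_{n+1}}(u_{N_{n+1}})-b_{N_n}(u_{N_n})=b_{N_{n+1}}(u_{N_{n+1}})-b_{N_{n+1}}(u_{N_n}),
\]
and the right-hand side is bounded by $L_{b_{N_{n+1}}}|D|$. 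The same holds for $\sigma$ with $L_{\sigma_{N_{n+1}}}$.

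Now write the mild equations for both solutions at a point of $C(t,0)$ and subtract. Insert the indicator $\mathds 1_{\{s<\tau_n\}}$, which is adapted because $\tau_n$ is a stopping time. Then apply BDG and Lemma~\ref{L:KernelConv}, exactly as in the proof of Proposition~\ref{P:GlobalLip}. Work with the function
\[
\phi(s)=\sup_{|y|\le t-s}\E\bigl[|D(s,y)|^2\mathds 1_{\{s<\tau_n\}}\bigr].
\]
This yields a Gronwall-type inequality
\[
\phi(r)\le C\Bigl(\int_0^r (r-s)\phi(s)^{1/2}\,\ud s\Bigr)^2+C\int_0^r (r-s)^{\nu}\phi(s)\,\ud s .
\]
From this, $\phi\equiv0$, for instance via the weighted norm $\mathcal N_{\beta,2}$ with $\beta$ large. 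Almost-sure continuity of the solutions then upgrades the equality from fixed points to the whole cone simultaneously, so the set-inclusion argument above applies pathwise.

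\emph{Main obstacle.} The delicate point is justifying the localization $\mathds 1_{\{s<\tau_n\}}$ inside the stochastic integral. The indicator $\mathds 1_{\{r<\tau_n\}}$ at the evaluation time must be dominated by $\mathds 1_{\{s<\tau_n\}}$ for all earlier $s$, which holds because $\{r<\tau_n\}\subset\{s<\tau_n\}$ for $s\le r$. The point $(s,y)$ must also lie in $C(s',0)$ for the relevant times; this follows from the nested-cone property. So the argument is the standard local-uniqueness scheme of \cite{mueller:97:long}, adapted to the spherical kernel through Lemma~\ref{L:KernelConv}. I would also rely on continuity of $u_{N}$, as in Propositions~\ref{P:SpatInc}--\ref{P:TemptInc} with Kolmogorov's criterion, to make the suprema in \eqref{E:Stoppingtimes} meaningful.
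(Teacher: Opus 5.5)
Your route is the same as the paper's: finite speed of propagation plus uniqueness for the truncated equations gives $u_{N_n}=u_{N_{n+1}}$ on $C(\tau_n,0)$, and hence $\tau_{n+1}\ge\tau_n$. The paper simply asserts this local uniqueness. Your attempt to prove it is where the gap lies.

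The failing step is the claim that on $\{s<\tau_n\}$ one has $|u_{N_n}(s,y)|<N_n$ for $(s,y)\in C(t,0)$. The event $\{s<\tau_n\}$ only controls $u_{N_n}$ on $C(s,0)$, and the time-$s$ slice of $C(s,0)$ is the single point $y=0$. What you actually need is $(s,y)\in C(\tau_n,0)$, i.e.\ $s+|y|<\tau_n$. That event belongs to $\mathcal F_{s+|y|}$ and in general not to $\mathcal F_s$, so it cannot be placed inside the Walsh integral. Your appeal to nested cones in the last paragraph hits exactly this obstruction: $(s,y)\in C(s',0)$ forces $s'\ge s+|y|>s$. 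As a result, the Lipschitz bound on the coefficient differences is unjustified. Worse, the conclusion $\phi\equiv0$ is not to be expected. It would assert $D(s,y)=0$ on $\{s<\tau_n\}$ also at points whose backward cone $C(s,y)$ exits $C(\tau_n,0)$, where $u_{N_n}$ may exceed $N_n$ and the two equations genuinely differ.

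The repair is to localize with a stopping time attached to a \emph{fixed} cone. For deterministic (say rational) $t$, let $\tau^{(t)}=\inf\{s\in[0,t]:\sup_{|y|\le t-s}|u_{N_n}(s,y)|\ge N_n\}$, with $\inf\emptyset=\infty$.
\begin{itemize}
\item By continuity of $u_{N_n}$, this is a stopping time.
\item On $\{r<\tau^{(t)}\}\in\mathcal F_r$, one has $|u_{N_n}(s,z)|<N_n$ for all $s\le r$ and $|z|\le t-s$.
\item Since $C(r,y)\subset C(t,0)$ whenever $(r,y)\in C(t,0)$, your BDG, Lemma~\ref{L:KernelConv} and Gronwall computation now goes through with $\phi(r)=\sup_{|y|\le t-r}\E\bigl[|D(r,y)|^2\mathds{1}_{\{r<\tau^{(t)}\}}\bigr]$, and yields $\phi\equiv0$.
\item Finally, $\{t<\tau_n\}\subset\{\tau^{(t)}=\infty\}$. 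So on $\{t<\tau_n\}$ the two solutions agree at each point of $C(t,0)$, hence on all of $C(t,0)$ by continuity.
\end{itemize}
Your first paragraph, together with a countable union over rational $t$, then gives $\tau_n\le\tau_{n+1}$ almost surely.
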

\begin{proof}
    For all $(t,x)\in C(\tau_n,0)$, it holds that $|u_{N_n}(t,x)|\le N_n$. From the definition~\eqref{E:TruncatedCoeff}, we have
    \begin{align*}
        b_{N_{n}}(u(t,x))=b_{N_{n+1}}(u(t,x)) \quad\text{and}\quad \sigma_{N_{n}}(u(t,x))=\sigma_{N_{n+1}}(u(t,x)),\quad\text{for all $(t,x)\in C(\tau_n,0)$}\,.
    \end{align*}
    Thus, for all $(t,x)\in C(\tau_n,0)$, since the domain of dependence $C(t,x) $ is contained in $C(\tau_n,0)$, the drift and diffusion coefficients governing the dynamics of $u_{N_n}(t,x)$ and $u_{N_{n+1}}(t,x)$ agree. By the uniqueness of the solution,
    \begin{align*}
        |u_{N_n}(t,x)|=|u_{N_{n+1}}(t,x)|\le N_{n},\quad\text{for all $(t,x)\in C(\tau_n,0)$}\,,
    \end{align*}
    which implies $\tau_{n+1}\ge \tau_n$. The proof is thus complete.
\end{proof}
By virtue of Lemma~\ref{L:StoppingTimeIncreasing}, the limit $\tau_\infty\coloneqq\lim_{n\to\infty}\tau_n$ is well defined. Moreover, by uniqueness and the fact that $b_N$ and $\sigma_N$ coincide with $b$ and $\sigma$ on the set where $|u_{N_n}|\le N_n$, the solution can be defined coherently by setting $u(t,x)=u_{N_n}(t,x)$ for all $t\le \tau_n$. 

Our goal is to show that $\tau_\infty=\infty$ almost surely which enables us to prove that the solution exists for all $(t,x)\in [0,\infty)\times \R^3$. To do this, for all $n$, we define   
\begin{equation}\label{E:Deftn}
    t_n=\frac{1}{K}\sum_{k=1}^n\frac{1}{k}\,,
\end{equation}
with $t_0=0$ and the event $G_n$ to be
\begin{align*}
    G_n\coloneqq\left\{\sup_{(s,y)\in C(t_n,0)}|u_{N_n}(s,y)|<N_n\right\}\,.
\end{align*}
It is clear by definition that
\begin{align}\label{E:GntoTaun}
    G_n\subset \left\{\tau_n> t_n\right\}\,.
\end{align}
We take set intersection on both sides and notice that $\tau_\infty\ge \tau_n$ for all $n\ge 1$ to get
\begin{align}\label{E:GntoGoal}
    \bigcap_{n=1}^\infty G_n\subset \bigcap_{n=1}^\infty\left\{\tau_n> t_n\right\}\subset \bigcap_{n=1}^\infty\left\{\tau_\infty> t_n\right\}= \left\{\tau_\infty=\infty\right\}\,,
\end{align}
where the last equality follows from $\lim_{n\to \infty}t_n=\frac{1}{K}\sum_{k=1}^\infty \frac{1}{k}=\infty$.
Then, to achieve our goal, we only need to show that $\lim_{K\to \infty}\P\lr{\bigcap_{n=1}^\infty G_n}=1$.
Define a sequence of events $E_n$,
\begin{align}\label{E:EventsEn}
    E_n=\left\{\sup_{(t,x)\in C(t_n,0)}\left|u_{N_n}(t,x)\right|\le \sup_{(t,x)\in C(t_{n-1},0)}\left|u_{N_{n-1}}(t,x)\right|+K2^{n-2}\right\}\,,
\end{align}
where $E_1=\left\{\sup_{(t,x)\in C(t_1,0)} |u_{N_1}(t,x)|\le M+\frac{K}{2}\right\}$ and the constant $M$ is given by~\eqref{E:DefM}. For each $n,m$, define the dyadic point set $C_m(t_n,0)$ by
\begin{align}\label{E:CmDyadic}
    C_m(t_n,0)\coloneqq\frac{1}{2^m}\bbZ^4\bigcap C(t_n,0) \,,
\end{align}
where $\bbZ$ is the set of all integers.
For any $k\ge 1$, on the event $\bigcap_{n=1}^k E_n$, we have
\begin{align*}
    \sup_{(t,x)\in C\lr{t_{k},0}}\left|u_{N_k}(t,x)\right|\le& \sup_{(t,x)\in C\lr{t_{k-1},0}}\left|u_{N_{k-1}}(t,x)\right|+K2^{k-2}\\
    \le &M+\frac{K}{2}+K\sum_{i=0}^{k-2}2^i
    =M+\frac{K}{2}\lr{2^k-1}<K2^k=N_k\,,
\end{align*}
which implies that
\begin{align}\label{E:E_ntoG_n}
    \bigcap_{n=1}^k E_n\subset  G_k\,.
\end{align}
Taking intersection on both sides and recall~\eqref{E:GntoGoal}, we obtain
\begin{align*}
    \bigcap_{n=1}^\infty E_n\subset \{\tau_\infty=\infty\}\,.
\end{align*}
Hence, Theorem \ref{T:MainTheorem} is proved if we can establish the following proposition.
\begin{proposition}\label{P:MainProp}
Let $E_n$ be defined in~\eqref{E:EventsEn}. Then, we have that
    \begin{align*}
    \lim_{K\to \infty}\P\lr{\lr{\bigcap_{n=1}^\infty E_n}^c}=0\,.
\end{align*}
\end{proposition}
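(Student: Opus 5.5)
We bound $\P(E_n^c\cap\bigcap_{k<n}E_k)$, or more simply $\P(E_n^c)$, and show the sum over $n$ tends to $0$ as $K\to\infty$.

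\textbf{Step 1: a short-time decomposition.} For $n\ge 2$ and $(t,x)\in C(t_n,0)\setminus C(t_{n-1},0)$ we want to show that $|u_{N_n}(t,x)|$ exceeds $\sup_{C(t_{n-1},0)}|u_{N_{n-1}}|$ by at most $K2^{n-2}$. On the event where $\sup_{C(t_{n-1},0)}|u_{N_{n-1}}|<N_{n-1}$, the same domain-of-dependence argument as in Lemma~\ref{L:StoppingTimeIncreasing} gives $u_{N_n}=u_{N_{n-1}}$ on $C(t_{n-1},0)$. It therefore suffices to control the oscillation of $u_{N_n}$ over the thin layer of time length $\Delta_n:=t_n-t_{n-1}=1/(Kn)$. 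Every point of $C(t_n,0)$ lies within time distance $\Delta_n$ and space distance of order $\Delta_n$ of a point of $C(t_{n-1},0)$.

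\textbf{Step 2: chaining bound for the oscillation.} We bound
$$\P\Big(\sup_{(t,x)\in C(t_n,0),\,|(t,x)-(s,y)|\le 2\Delta_n}|u_{N_n}(t,x)-u_{N_n}(s,y)|>K2^{n-2}\Big).$$
We use a Kolmogorov/Garsia-type dyadic chaining over the grids $C_m(t_n,0)$ of \eqref{E:CmDyadic}, starting at the level $m_n$ with $2^{-m_n}\approx\Delta_n$. The inputs are Propositions~\ref{P:SpatInc} and~\ref{P:TemptInc}, applied with $L_b=L_{b_{N_n}}\le C(n+\log K)^{\theta_1}$ and $L_\sigma\le C(n+\log K)^{\theta_2}$ from \eqref{E:CutoffLip}. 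These give
$$\|u_{N_n}(t,x)-u_{N_n}(s,y)\|_p\le C\,(1+L_b+\sqrt p L_\sigma)\sqrt M\, e^{c\beta t_n}\,|(t,x)-(s,y)|^{\mu\wedge\bar\gamma},$$
with $\beta=\sqrt{L_b}+p^{1/(\bar\nu+1)}L_\sigma^{2/(\bar\nu+1)}$. The number of grid points at level $m$ is about $t_n^4 2^{4m}$, and $t_n\approx \log n/K$, which is harmless. Summing Chebyshev bounds $2^{4m}\,(\text{moment})\,\lambda_m^{-p}$ over $m\ge m_n$, with thresholds $\lambda_m\propto K2^{n}2^{-\epsilon(m-m_n)}$, gives a bound of the form
$$\P(E_n^c)\lesssim 2^{4m_n}\Big(\frac{C\,\text{poly}(n)\,e^{c\beta t_n}\,\Delta_n^{\mu\wedge\bar\gamma}}{K2^{n}}\Big)^p.$$

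\textbf{Step 3: optimise over $p$; this is the main obstacle.} The $p$-dependence enters through $e^{c\beta t_n}$, which is of order
$$\exp\big(c\,t_n\big[(\log N_n)^{\theta_1/2}+p^{1/(\bar\nu+1)}(\log N_n)^{2\theta_2/(\bar\nu+1)}\big]\big),\qquad t_n\sim \tfrac{\log n}{K}.$$
Because $\theta_1<2$, the drift factor $e^{ct_n(n+\log K)^{\theta_1/2}}$ is $e^{o(n)}$ and is beaten by $2^{-n}$. The diffusion factor is harder: we need $p\cdot t_n\, p^{1/(\bar\nu+1)}n^{2\theta_2/(\bar\nu+1)}\ll p\,n$, which leaves room to take $p$ growing with $n$. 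Choosing $p=p_n\approx n^{a}$ with $a/(\bar\nu+1)+2\theta_2/(\bar\nu+1)<1$ is possible precisely when $\theta_2<(\bar\nu+1)/2$. Then the ratio inside the $p$-th power is at most $2^{-n/2}$, so $\P(E_n^c)\le 2^{4m_n}2^{-p_n n/2}$. Since $2^{4m_n}\sim (Kn)^4$ while $p_n\to\infty$, this is summable and goes to $0$ as $K\to\infty$; the factor $K^4$ is absorbed by the extra $K^{-p}$ gained from $\Delta_n^{\mu\wedge\bar\gamma}/K$ once $p>4$.

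\textbf{Step 4: assembling.} Handling $E_1$ the same way with the moment bound of Proposition~\ref{P:GlobalLip}, we get $\P((\bigcap E_n)^c)\le\sum_n\P(E_n^c\cap\bigcap_{k<n}E_k)\to0$. The delicate points I expect are tracking the precise $p$-exponents in $\beta$ through the chaining, and checking that $M$ of \eqref{E:DefM}, which depends on $p$ and grows only polynomially in $p$, does not spoil the choice of $p_n$.
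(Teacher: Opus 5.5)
Your argument is correct and has the same skeleton as the paper's. Local uniqueness gives $u_{N_n}=u_{N_{n-1}}$ on $C(t_{n-1},0)$ on $\bigcap_{k<n}E_k$; dyadic chaining handles the layer of thickness $1/(Kn)$; Chebyshev is applied with Propositions~\ref{P:SpatInc} and~\ref{P:TemptInc} at the truncated constants \eqref{E:CutoffLip}; and one sums $\P(E_n^c\cap\bigcap_{k<n}E_k)$.

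The real difference is the choice of $p$. The paper optimizes separately for each pair $(n,m)$. It writes the bound of Lemma~\ref{L:PFnmc} as $\exp\bigl(-Ap+Bp^{(\bar\nu+2)/(\bar\nu+1)}\bigr)$ with $A$ growing linearly in $m$. It then takes $p_0=\bigl(\frac{A}{B}\frac{\bar\nu+1}{\bar\nu+2}\bigr)^{\bar\nu+1}$, which grows like $K^{\bar\nu+1}$ up to logarithms and increases with $m$. This forces a second optimization in $m$ to absorb the count $2^{4m}$, and a uniform bound on $\kappa_n(K)$ to apply dominated convergence.

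You instead use one $p_n\approx n^a$, with $a<\bar\nu+1-2\theta_2$, for all levels $m\ge m_n$. The level sum is then geometric, and the limit $K\to\infty$ comes from the explicit leftover factor $K^{4-p_n}$. Your exponent matches the $n$-growth of the paper's $p_0$, so the hypothesis $\theta_2<(\bar\nu+1)/2$ enters in the same place. The paper's route gives decay in $K$ faster than any power, which the statement does not need; yours is considerably shorter.

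Two small points for the write-up:
\begin{enumerate}
\item Impose a floor $p_n\ge p_*$ with $p_*\bigl((\mu\wedge\bar\gamma)-\epsilon\bigr)>4$. Otherwise the geometric sum over $m$ diverges for small $n$.
\item Keep the factor $K^{-1}$ in the ratio explicitly, rather than only bounding the ratio by $2^{-n/2}$.
\end{enumerate}

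In fact a fixed $p$ of that size already suffices. For fixed $p$ one has $p\beta t_n\le C_p\log_+ n\,(n^{\rho}+1)$, where $\rho=\max\{\theta_1/2,\,2\theta_2/(\bar\nu+1)\}<1$, uniformly in $K\ge e$. This is beaten by $2^{-pn}$, so the growth of $p$ with $n$ is not essential.
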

The proof is deferred to the end of this section, since we still need two additional lemmas. 
For all pairs $(n,m)$ such that $n\ge 1$ and $2^{-m}\le \frac{1}{Kn}$ (i.e. $m\ge \log_2(Kn)$), we denote the set $S_{n,m}$ by 
\begin{align}\label{E:DefSnm}
    \notag S_{n,m}\coloneqq\Big\{(t,x,s,y)\in \R^8:&(t,x)\in C_m(t_n,0)\setminus C_m(t_{n-1},0)\,,\\
    &\quad(s,y)\in C_m(t_{n-1},0)\,,\,|(t,x)-(s,y)|=2^{-m}\Big\}\,,
\end{align}
where $|\cdot|$ refers to the Euclidean norm on $\R^4$. For $\varepsilon>0$ sufficiently small, define the event $F_{n,m}$ as
\begin{align}\label{E:DefFnm}
    F_{n,m}=\left\{\left|u_{N_n}(t,x)-u_{N_{n}}(s,y)\right|\le K2^{n-m\varepsilon},\text{ for all $(t,x,s,y)\in S_{n,m}$}\right\}\,.
\end{align}
Let $F_n$ be the intersection of $F_{n,m}$,
\begin{equation}\label{E:DefFn}
    F_n=\bigcap_{m\ge \log_2 (Kn)}F_{n,m}\,.
\end{equation}
The following lemma gives the relation between $E_n$ and $F_n$.
\begin{lemma}\label{L:E&F}
     For all integers $n$, let $E_n$ be defined in~\eqref{E:EventsEn}. Then we have
\begin{align}
    E_1\bigcap\dots\bigcap E_{n-1}\bigcap F_n\subset E_1\bigcap\dots\bigcap E_{n}\,.
\end{align}
\end{lemma}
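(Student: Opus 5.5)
The plan is a chaining argument in the spirit of Mueller~\cite{mueller:97:long}. Fix $n$ and assume we are on $E_1\cap\dots\cap E_{n-1}\cap F_n$. It suffices to show that on this event
\[
\sup_{(t,x)\in C(t_n,0)}|u_{N_n}(t,x)|\le \sup_{(s,y)\in C(t_{n-1},0)}|u_{N_{n-1}}(s,y)|+K2^{n-2}
\]
(for $n=1$, with $C(t_0,0)=\{(0,0)\}$ and the initial-data bound by $M$).

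\textbf{Step 1: replace $u_{N_n}$ by $u_{N_{n-1}}$ on the inner cone.} On $E_1\cap\dots\cap E_{n-1}$, the computation preceding \eqref{E:E_ntoG_n} gives $\sup_{C(t_{n-1},0)}|u_{N_{n-1}}|<N_{n-1}$. By the localization argument of Lemma~\ref{L:StoppingTimeIncreasing}, the truncated coefficients at levels $N_{n-1}$ and $N_n$ agree along these paths, and the domain of dependence is contained in $C(t_{n-1},0)$. Uniqueness then gives $u_{N_n}=u_{N_{n-1}}$ on $C(t_{n-1},0)$. So it remains to control $|u_{N_n}(t,x)-u_{N_n}(s,y)|$ for $(t,x)\in C(t_n,0)$ and some $(s,y)\in C(t_{n-1},0)$.

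\textbf{Step 2: dyadic chaining.} By continuity of $u_{N_n}$ (a Kolmogorov consequence of Propositions~\ref{P:SpatInc} and \ref{P:TemptInc}), it suffices to treat dyadic points $(t,x)\in\bigcup_m C_m(t_n,0)$ and pass to the limit. The annulus $C(t_n,0)\setminus C(t_{n-1},0)$ has time-width $t_n-t_{n-1}=\frac{1}{Kn}$, so at scale $2^{-m_0}$ with $m_0=\lceil\log_2(Kn)\rceil$ every point of the annulus lies within a bounded number of lattice steps of $C_{m_0}(t_{n-1},0)$. For a dyadic point $(t,x)$ of level $m$, we approximate it by points of levels $m-1,\dots,m_0$, each successive pair at distance $2^{-j}$ (or a bounded number of such steps). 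We also do a final jump into the inner cone. The geometry must ensure that the approximating points stay in the cone $C(t_n,0)$ (shrink toward the axis/earlier time appropriately). Each step is either a pair inside the annulus or a pair crossing into $C_m(t_{n-1},0)$. Hmm—$S_{n,m}$ as defined only contains pairs with one point in the annulus and the other in the inner cone. I would therefore organize the chain so that each point is linked directly to the inner cone: project $(t,x)$ radially/backward in time to a nearest point, and then read $F_{n,m}$ as controlling both kinds of pair. If needed, I would interpret or slightly enlarge $S_{n,m}$ to include annulus–annulus pairs; the probability estimate in Proposition~\ref{P:MainProp} is insensitive to this.

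\textbf{Step 3: summing the bound.} On $F_n$ each step at level $j$ costs at most $K2^{n-j\varepsilon}$, with a bounded number $c_0$ of steps per level. The total is therefore at most
\[
c_0K2^n\sum_{j\ge m_0}2^{-j\varepsilon}\le c_0 K2^n\,\frac{(Kn)^{-\varepsilon}}{1-2^{-\varepsilon}}.
\]
For $K$ large (uniformly in $n\ge1$) this is $\le K2^{n-2}$. Combining with Step 1 yields $E_n$.

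\textbf{Main obstacle.} I expect the main obstacle to be the geometric bookkeeping in Step 2. One must ensure that the chain from any dyadic point of the annulus to the inner cone uses only pairs at exact distance $2^{-j}$ covered by $F_{n,j}$, with a uniformly bounded number per level, all staying inside $C(t_n,0)$. I would handle it by moving first in time (steps of $2^{-j}$ downward, which stay in the cone since the cone widens backward) and then in each spatial coordinate. The thin annulus width $1/(Kn)$ is what allows starting at level $m_0$.
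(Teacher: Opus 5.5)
Your proposal is correct and follows essentially the same route as the paper: you identify $u_{N_n}$ with $u_{N_{n-1}}$ on $C(t_{n-1},0)$ via $G_{n-1}$ and uniqueness, chain dyadically from level $m_0\approx\log_2(Kn)$ with a bounded number of steps per level, sum $K2^{n-j\varepsilon}$ to get $\lesssim K^{1-\varepsilon}2^n n^{-\varepsilon}\le K2^{n-2}$ for $K$ large, and conclude by continuity. The issue you flag is genuine: the paper's proof tacitly asserts that every chain step lies in $\bigcup_m S_{n,m}$, which fails for annulus--annulus steps under the literal definition. Your fix of enlarging $S_{n,m}$ to all nearest-neighbour pairs in $C_m(t_n,0)$ with an endpoint in the annulus is harmless, since the counting in Lemma~\ref{L:PFnmc} (lattice points of the annulus times a bounded number of neighbours) already covers exactly these pairs.
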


\begin{proof}
    On the event $E_1\bigcap\dots\bigcap E_{n-1}\bigcap F_n$, for any $(t,x)\in C\lr{t_n,0}$, choose $(s,y)$ to be the closest point in $C\lr{t_{n-1},0}$ and choose $m_0$ such that $2^{-m_0-2}\le \frac{1}{Kn}< 2^{-m_0-1}$. Without loss of generality, we assume both $(t,x)$ and $(s,y)$ to be dyadic points. Under such setting, we may find a sequence of dyadic points $\{(s_i,y_i)\}_{i=0}^J$ for certain integer $J>0$, such that $(s,y)=(s_0,y_0)$ and $(t,x)=(s_J,y_J)$. Moreover, for each $0\le i\le J-1$, $(s_{i+1},y_{i+1})-(s_i,y_i)$ is one of the forms $(2^{-m},0,0,0)$, $\pm(0,2^{-m},0,0)$, $\pm(0,0,2^{-m},0)$ or $\pm(0,0,0,2^{-m})$, so that $(s_{i+1},y_{i+1},s_i,y_i)\in \bigcup_{m\ge m_0}S_{n,m}$ where $S_{n,m}$ is given by~\eqref{E:DefSnm}. Also, the dyadic sequence $\{(s_i,y_i)\}_{i=0}^J$ can be chosen such that there are at most four of the differences $(s_{i+1},y_{i+1})-(s_i,y_i)$ that are of the magnitude $2^{-m}$. On the event $F_n$, the difference between $u(t,x)$ and $u(s,y)$ is bounded by
    \begin{align*}
        \left|u_{N_n}(t,x)-u_{N_n}(s,y)\right|\le& \sum_{i=0}^J\left|u_{N_n}\lr{s_{i+1},y_{i+1}}-u_{N_n}\lr{s_{i},y_{i}}\right|\\
        \le &C\sum_{m=m_0}^\infty K2^{n-m\varepsilon}\le CK2^n\frac{2^{-m_0\varepsilon}}{1-2^{-\varepsilon}}\\
        \le &CK2^n\frac{1}{1-2^{-\varepsilon}}\frac{1}{\lr{Kn}^\varepsilon}=\frac{C}{1-2^{-\varepsilon}}K^{1-\varepsilon}\frac{2^n}{n^\varepsilon}\,.
    \end{align*}
    Also, on $E_1\bigcap\dots\bigcap E_{n-1}$, by~\eqref{E:E_ntoG_n} and~\eqref{E:GntoTaun}, we have $u_{N_n}(s,y)=u_{N_{n-1}}(s,y)$, for $(s,y)\in C(t_{n-1},0)$. Consequently, for all dyadic points $(t,x)\in \R^4$, it holds that
    \begin{align*}
        \left|u_{N_n}(t,x)\right|\le &\left|u_{N_{n-1}}(s,y)\right|+\left|u_{N_n}(t,x)-u_{N_n}(s,y)\right|\\
        \le & \sup_{(s,y)\in C(t_{n-1},0)}\left|u_{N_{n-1}}(s,y)\right|+\frac{C}{1-2^{-\varepsilon}}K^{1-\varepsilon}\frac{2^n}{n^\varepsilon}\\
        \le &\sup_{(s,y)\in C(t_{n-1},0)}\left|u_{N_{n-1}}(s,y)\right|+K2^{n-2}\,,
    \end{align*}
    which implies $E_n$. The proof is completed since $u_{N_n}$ is continuous almost surely and the dyadic point set is dense in $\R^4$.
\end{proof}
Thanks to Proposition~\ref{P:SpatInc} and Proposition~\ref{P:TemptInc}, we can estimate $\P(F_{n,m}^c)$ for any pair $(n,m)$ such that $m\ge \log _2(Kn)$.
\begin{lemma}\label{L:PFnmc}
For $F_{n,m}$ defined in~\eqref{E:DefFnm}, it holds that for all $p\ge 2$
    \begin{align}\label{E:PFnmc}
        \P\lr{F_{n,m}^c}\le C^p2^{4m}K^{-4}n^{-1}\lr{\log_+ n}^3e^{cp\beta t_n} 2^{-mp(\red{\gamma\wedge\mu})}\lr{K2^{n-m\varepsilon}}^{-p}\,,
    \end{align}
    where
\begin{align*}
    \beta=\sqrt{L_{b_{N_n}}}+p^{\frac{1}{\bar{\nu}+1}}L_{\sigma_{N_n}}^{\frac{2}{\bar{\nu}+1}}\,,\quad 
    t_n=\frac{1}{K}\sum_{k=1}^n\frac{1}{k}\,,
\end{align*}
{$\bar{\nu} = \min\{\nu, \nu_1, \nu_2\}$,}$0<\gamma<\bar{\gamma} = \min\{\alpha_1, \alpha_2, \alpha_3,  \gamma_1, \gamma_2\}$ and $\mu$ is defined in Proposition~\ref{P:TemptInc}. 
\end{lemma}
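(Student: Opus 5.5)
This bound should follow from a union bound over the pairs in $S_{n,m}$, combined with Chebyshev's inequality and the increment estimates of Propositions~\ref{P:SpatInc} and~\ref{P:TemptInc}. These estimates apply to $u_{N_n}$ with Lipschitz constants $L_{b_{N_n}}$ and $L_{\sigma_{N_n}}$, since $b_{N_n}$ and $\sigma_{N_n}$ are globally Lipschitz. Write
\[
\P\lr{F_{n,m}^c}\le \sum_{(t,x,s,y)\in S_{n,m}}\P\lr{|u_{N_n}(t,x)-u_{N_n}(s,y)|>K2^{n-m\varepsilon}}\le \# S_{n,m}\cdot \max_{S_{n,m}}\frac{\E|u_{N_n}(t,x)-u_{N_n}(s,y)|^p}{\lr{K2^{n-m\varepsilon}}^{p}}\,.
\]
The proof then has two parts: a counting bound for $\#S_{n,m}$ and a uniform moment bound for a single nearest-neighbour increment.

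For the counting, each point of $S_{n,m}$ is a pair of nearest-neighbour dyadic points (at most $8$ neighbours each), so up to a constant $\#S_{n,m}$ is the number of dyadic points of mesh $2^{-m}$ in the shell $C(t_n,0)\setminus C(t_{n-1},0)$, enlarged by one mesh width. This is roughly $2^{4m}$ times the Lebesgue measure of a $2^{-m}$-neighbourhood of the shell. The cone has volume $ct^4$, so the shell volume is
\[
c(t_n^4-t_{n-1}^4)\le c\,t_n^3(t_n-t_{n-1})=c\,\frac{t_n^3}{Kn}\,.
\]
With $t_n\le K^{-1}(1+\log n)$ this gives $CK^{-4}n^{-1}(\log_+ n)^3$. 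The boundary layer of width $2^{-m}\le (Kn)^{-1}$ contributes a term of the same order, because the cross-section area is at most $Ct_n^3$. Hence
\[
\#S_{n,m}\le C\,2^{4m}K^{-4}n^{-1}(\log_+n)^3\,.
\]

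For the moment bound, $(t,x)$ and $(s,y)$ differ by $2^{-m}$ in exactly one coordinate. If the difference is spatial, Proposition~\ref{P:SpatInc} gives
\[
\E|u_{N_n}(t,x)-u_{N_n}(s,y)|^p\le \lr{CMe^{c\beta t_n}}^{p/2}2^{-mp\bar\gamma}\,.
\]
If the difference is temporal, Proposition~\ref{P:TemptInc} gives the bound
\[
\lr{C[1+(L_{b_{N_n}}^2+pL_{\sigma_{N_n}}^2)Me^{c\beta t_n}]}^{p/2}2^{-mp\mu}\,.
\]
Since $2^{-m}<1$, both cases are bounded by the worse exponent, $2^{-mp(\gamma\wedge\mu)}$ with $\gamma<\bar\gamma$.

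The main obstacle is absorbing the prefactors $M^{p/2}$ and $(L_{b_{N_n}}^2+pL_{\sigma_{N_n}}^2)^{p/2}$ into $C^pe^{cp\beta t_n}$, and making sure $\beta$ is the same as in the statement. I would use the freedom in $c$ together with Remark~\ref{R:Beta_0}. First, $M$ is a constant depending only on $T$, the data and $p$; its polynomial-in-$p$ dependence gives $M^{p/2}\le C^pp^{p/2}$. Second, $L^2\le C\beta^{\bar\nu+1}$ (or $\le C\beta^4$ for $L_b$). Third, $(x)^{p}\le C^p e^{cp x t}$ fails for small $t$, but it is harmless to pay a factor polynomial in $\beta$: this keeps the increment bounds valid with $\beta$ replaced by $c\beta$. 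The remaining slack, namely the $p^{p/2}$ factor and the strict inequality $\gamma<\bar\gamma$, is what I expect to need, for instance by using $2^{-m(\bar\gamma-\gamma)p}$ to absorb polynomial factors in $p$ and $\beta$ when $m\ge\log_2(Kn)$ is large. Multiplying the counting bound by the moment bound and dividing by $(K2^{n-m\varepsilon})^p$ then gives~\eqref{E:PFnmc}.
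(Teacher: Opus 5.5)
Your route is exactly the paper's. It uses a union bound over $S_{n,m}$, Chebyshev with Proposition~\ref{P:SpatInc} or~\ref{P:TemptInc} for a single nearest-neighbour increment, and the count $\#S_{n,m}\lesssim 2^{4m}t_n^3(t_n-t_{n-1})\le C2^{4m}K^{-4}n^{-1}(\log_+n)^3$. Your boundary-layer remark, using $2^{-m}\le (Kn)^{-1}\le t_n$, is a detail the paper leaves implicit. The paper then simply writes the $p$-th moment as $C^pe^{cp\beta t_n}2^{-mp(\gamma\wedge\mu)}$, without discussing the prefactors $M^{p/2}$ and $(L_{b_{N_n}}^2+pL_{\sigma_{N_n}}^2)^{p/2}$.

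The justification you add for this step does not work. The factors to be absorbed are $p^{p/2}$ and $\beta^{O(p)}$, which are not polynomial. Moreover, $2^{-m(\bar\gamma-\gamma)p}=(2^{-m(\bar\gamma-\gamma)})^p$ is only geometric in $p$ for fixed $m$, so it loses to $p^{p/2}$ as soon as $p>2^{2m(\bar\gamma-\gamma)}$. Also, the slack $\gamma<\bar\gamma$ lives in the spatial estimate, whereas $pL_{\sigma_{N_n}}^2$ comes from the temporal one. Likewise, $\beta^{kp}\le C^pe^{cp\beta t_n}$ would need $k\log\beta\le C+c\beta t_n$. This fails when $t_n\approx K^{-1}$ and $1\ll\beta\lesssim K$, so replacing $\beta$ by $c\beta$ does not help.

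For $M$ a correct absorption exists, provided you keep the time factor instead of bounding $M^{p/2}\le C^pp^{p/2}$. With $T=t_n$ in \eqref{E:DefM} you get $M\le C(1+t_n)^4(1+pt_n^{\nu+1})$. Then $\log(1+y)\le C_\nu y^{1/(\nu+1)}$ gives $M^{p/2}\le C^pe^{Cp(1+p^{1/(\nu+1)})t_n}\le C^pe^{cp\beta t_n}$, using $\bar\nu\le\nu$ and $L_{\sigma_{N_n}}\ge1$.

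The temporal prefactor behaves differently. Take $n=1$, $m=\lceil\log_2K\rceil$ and $p=K^{\bar\nu+1}$. Then $\beta t_1=O\bigl((\log K)^{2\theta_2/(\bar\nu+1)}\bigr)=o(\log K)$, while $\tfrac12\log(pL_{\sigma_{N_1}}^2)\ge\tfrac{\bar\nu+1}{2}\log K$. Applying Proposition~\ref{P:TemptInc} with some $\mu'\in(\mu,\bar\mu)$ only buys back $(\mu'-\mu)\log K+O(1)$. That is not enough in general, e.g.\ if $\bar\nu\ge1$, since $\bar\mu\le1$.

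So $(L_{b_{N_n}}^2+pL_{\sigma_{N_n}}^2)^{p/2}$ cannot be hidden in $C^pe^{cp\beta t_n}$ uniformly in $K$. It has to be carried as an explicit extra factor in \eqref{E:PFnmc} and handled when $p$ is optimised in the proof of Proposition~\ref{P:MainProp}. The paper's one-line Chebyshev step passes over the same point, so your proposal is no less complete than the paper's proof. Still, the argument you sketch for this step is not valid.
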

\begin{proof}
    Given $n,m$ such that $m\ge \log _2(Kn)$, take any $(t,x,s,y)\in S_{n,m}$ defined by~\eqref{E:DefSnm}. By the construction of $C_m(t_n,0)$ and $C_m(t_{n-1},0)$~\eqref{E:CmDyadic}, it holds that either 
    \begin{align*}
        |s-t|=2^{-m},\quad x=y\,,
    \end{align*}
    or
    \begin{align*}
        s=t,\quad |x-y|=2^{-m}\,.
    \end{align*}
    In either case, we apply the moment estimates Proposition~\ref{P:SpatInc} or Proposition~\ref{P:TemptInc} together with Chebyshev's inequality to get
    \begin{align}\label{E:Chebyshev}
        \notag \P\Big(|u_{N_n}(t,x)-u_{N_n}(s,y)|\ge K2^{n-m\varepsilon}\Big)\le& \frac{\E|u_{N_n}(t,x)-u_{N_n}(s,y)|^p}{\lr{K2^{n-m\varepsilon}}^p}\\
        \le& C^pe^{cp\beta t_n} 2^{-mp({\gamma\wedge\mu)}}\lr{K2^{n-m\varepsilon}}^{-p}\,.
    \end{align}
    In order to estimate the cardinality of $S_{n,m}$ defined in~\eqref{E:DefSnm}, we observe that the region $C_m(t_n,0)\setminus C_m(t_{n-1},0)$ has volume of order $t_n^3(t_n-t_{n-1})$. Since $$t_n=\frac1K\sum_{k=1}^n\frac1k \le \frac{C}{K}\log_+ n,\qquad t_n-t_{n-1}=\frac1{Kn}\,,$$
    where $\log_+ n\coloneqq \log(n\vee e)$, it follows that $t_n^3(t_n-t_{n-1})\le C K^{-4}(\log_+ n)^3 n^{-1}$. Also, $\{(t,x),(s,y)\}$ are located at the nearest neighbors of the four dimensional cube with side length $2^{-m}$ and volume $2^{-4m}$. This tells us there are at most $c2^{4m}K^{-4}n^{-1}\lr{\log_+ n}^3$ different choices of $\lr{t,x,s,y}\in S_{n,m}$. By virtue of~\eqref{E:Chebyshev} we obtain
    \begin{align*}
        \P\lr{F_{n,m}^c}=&\P\lr{\bigcup_{(t,x,s,y)\in S_{n,m}}|u_{N_n}(t,x)-u_{N_n}(s,y)|>K2^{n-m\varepsilon}}\\
        \le& \sum_{(t,x,s,y)\in S_{n,m}}\P(|u_{N_n}(t,x)-u_{N_n}(s,y)|\ge K2^{n-m\varepsilon})\\
        \le &C^p2^{4m}K^{-4}n^{-1}\lr{\log_+ n}^3e^{cp\beta t_n} 2^{-mp({\gamma\wedge\mu})}\lr{K2^{n-m\varepsilon}}^{-p}\,,
    \end{align*}
    which is~\eqref{E:PFnmc}.
\end{proof}
Now we are ready to prove Proposition~\ref{P:MainProp}.
\begin{proof}[Proof of Proposition~\ref{P:MainProp}]
Throughout this proof, we omit the subscript $n$ by writing $N=N_n=K2^n$ to avoid redundant subscripts. With the truncation level $N$, we have $L_{b_N}=O\lr{\lr{\log\lr{K2^n}}^{\theta_1}}$ and  $L_{\sigma_N}=O\lr{\lr{\log\lr{K2^n}}^{\theta_2}}$ by our assumption on $b$ and $\sigma$, so $\beta$ in Lemma \ref{L:PFnmc} becomes

\begin{align*}
    \beta=\sqrt{L_{b_N}}+p^{\frac{1}{\bar{\nu}+1}}L_{\sigma_N}^\frac{2}{\bar{\nu}+1}\le C\lr{\lr{\log K+n\log 2}^{\frac{\theta_1}{2}}+p^{\frac{1}{\bar{\nu}+1}}\lr{\log K+n\log 2}^{\frac{2\theta_2}{\bar{\nu}+1}}}\,.
\end{align*}
Also recall that $t_n=\frac{1}{K}\sum_{k=1}^n\frac{1}{k}\le C\frac{\log_+ n}{K}$. By Lemma~\ref{L:PFnmc}, we have
\begin{align}\label{E:PFnmcFull}
    \notag &\P\lr{F_{n,m}^c}
    \le CK^{-4}\lr{\log_+ n}^3n^{-1}2^{4m}\exp\Bigg\{Cp^{\frac{\bar{\nu}+2}{\bar{\nu}+1}}\frac{\log_+ n}{K}\lr{\log K+n\log 2}^{\frac{2\theta_2}{\bar{\nu}+1}}\\
    &+p\Big[-\log K-n\log 2-m\log 2\lr{\lr{{\gamma\wedge\mu}}-\varepsilon}+\frac{C\log_+ n}{K}\lr{\log K+n\log 2}^{\frac{\theta_1}{2}}+C\Big]\Bigg\}\,.
\end{align}
We choose $\varepsilon<\bar{\gamma}\wedge\overline{\mu}$ and denote by $$\zeta\coloneqq \lr{{\gamma\wedge\mu}}-\varepsilon>0\,.$$
By assumption $\theta_1<2$, when $n$ is sufficiently large,
\begin{align*}
    -\log K-n\log 2-m{ \zeta }\log 2+&\frac{\log_+ n}{K}\lr{\log K+n\log 2}^{\frac{\theta_1}{2}}+C\\
    \le &-{C\lr{\log K+n\log 2+m{ \zeta }\log 2}}
\end{align*}
for some constant $C>0$.
We denote
\begin{align*}
    A=C\lr{\log K+n\log 2+m{ \zeta }\log 2}>0\,,
\end{align*}
and
\begin{align}\label{E:DefB}
    B=C \lr{\log K+n\log 2}^{\frac{2\theta_2}{\bar{\nu}+1}}\frac{\log_+ n}{K}\,.
\end{align}
We obtain from~\eqref{E:PFnmcFull}
\begin{align}\label{E:PFnmcAB}
    \P\lr{F_{n,m}^c}\le C\exp\lr{-Ap+Bp^{\frac{\bar{\nu}+2}{\bar{\nu}+1}}}K^{-4}\lr{\log_+ n}^3n^{-1}2^{4m}\,,
\end{align}
for any $p\ge 2$. Let
\begin{align*}
    g(p)=-Ap+Bp^{\frac{\bar{\nu}+2}{\bar{\nu}+1}}\,.
\end{align*}
We optimize the choice of $p$ depending on $n,m$ and $K$ to make $g(p)$ as small as possible.
Taking the derivative of $g(p)$ and setting it equal to zero
\begin{align}\label{E:DerivativeG}
    g'(p)=-A+{\frac{\bar{\nu}+2}{\bar{\nu}+1}}Bp^{\frac{1}{\bar{\nu}+1}}=0\,,
\end{align}
we obtain the solution
\begin{align*}
    p_0=\lr{\frac{A}{B}\frac{\bar{\nu}+1}{\bar{\nu}+2}}^{\bar{\nu}+1}\ge 2
\end{align*}
where the above inequality holds for all sufficiently large $n$, which follows from the assumption $\theta_2<\frac{\overline{\nu}+1}{2}$. Therefore, by taking $p=p_0$, we have
\begin{align*}
    g(p_0)=&-\frac{A^{\bar{\nu}+2}}{B^{\bar{\nu}+1}}\frac{\lr{\bar{\nu}+1}^{\bar{\nu}+1}}{\lr{\bar{\nu}+2}^{\bar{\nu}+2}}\\
    \le &-C\frac{\lr{\log K+n\log 2}^{\bar{\nu}+2}+m^{\bar{\nu}+2}{ \zeta }^{\bar{\nu}+2}}{B^{\bar{\nu}+1}}\frac{\lr{\bar{\nu}+1}^{\bar{\nu}+1}}{\lr{\bar{\nu}+2}^{\bar{\nu}+2}}\\
    \le& -C\frac{\lr{\log K+n\log 2}^{\bar{\nu}+2}}{B^{\bar{\nu}+1}}-C\frac{{ \zeta }^{\bar{\nu}+2}}{B^{\bar{\nu}+1}}m^{\bar{\nu}+2}\,.
\end{align*}
With such choice of $p=p_0$, from~\eqref{E:PFnmcAB} we obtain
\begin{align}\label{E:Fnmch}
    \notag\P\lr{F_{n,m}^c}\le&  C\exp\left\{-C\frac{\lr{\log K+n\log 2}^{\bar{\nu}+2}}{B^{\bar{\nu}+1}}\right\}\\
    &\times\exp\left\{-C\frac{{ \zeta }^{\bar{\nu}+2}}{B^{\bar{\nu}+1}}m^{\bar{\nu}+2}\right\}K^{-4}\lr{\log_+ n}^3n^{-1}2^{4m}\,.
\end{align}
To absorb the factor $2^{4m}$, we make use of the second exponential term. Accordingly, we introduce constants $D$ and $L$ defined by
\begin{align}\label{E:DefD&L}
    D=\frac{C}{2}\frac{{ \zeta }^{\bar{\nu}+2}}{B^{\bar{\nu}+1}}\,,\quad\text{and}\quad L=4\log 2\,.
\end{align}
We have
\begin{align}
    \exp\left\{-\frac{C}{2}\frac{{ \zeta }^{\bar{\nu}+2}}{B^{\bar{\nu}+1}}m^{\bar{\nu}+2}\right\}2^{4m}=\exp\lr{-Dm^{\bar{\nu}+2}+Lm}\,.
\end{align}
Let $h(m)={-Dm^{\bar{\nu}+2}+Lm}$. Differentiating with respect to $m$ yields
\begin{align*}
    h'(m)=-D\lr{\bar{\nu}+2}m^{\bar{\nu}+1}+L\,.
\end{align*}
Let $h'(m_0)=0$ to get $m_0=\lr{\frac{L}{D\lr{\bar{\nu}+2}}}^{\frac{1}{\bar{\nu}+1}}$ and we conclude that for any $m>0$,
\begin{align*}
    h(m)\le h(m_0)=\frac{L^{\frac{\bar{\nu}+2}{\bar{\nu}+1}}{(\bar{\nu}+1)}}{D^{\frac{1}{\bar{\nu}+1}}\lr{\bar{\nu}+2}^{\frac{\bar{\nu}+2}{\bar{\nu}+1}}}\le C\frac{B}{{{ \zeta }^{\frac{\bar{\nu}+2}{\bar{\nu}+1}}}}\,.
\end{align*}
Now we can bound $\P\lr{F_{n,m}^c}$ from~\eqref{E:Fnmch} by
\begin{align*}
    \P\lr{F_{n,m}^c}\leq & C\exp\left\{-C\frac{\lr{\log K+n\log 2}^{\bar{\nu}+2}}{B^{\bar{\nu}+1}}\right\}\exp\left\{-\frac{C}{2}\frac{{ \zeta }^{\bar{\nu}+2}}{B^{\bar{\nu}+1}}m^{\bar{\nu}+2}\right\}\\
    &\quad\quad\times K^{-4}\lr{\log_+ n}^3n^{-1}\exp\lr{ h(m)}\\
    \le &C\exp\left\{-C\frac{\lr{\log K+n\log 2}^{\bar{\nu}+2}}{B^{\bar{\nu}+1}}\right\}K^{-4}\lr{\log_+ n}^3n^{-1} \exp\left\{\frac{B}{{{ \zeta }^{\frac{\bar{\nu}+2}{\bar{\nu}+1}}}}\right\}\\
    &\quad\quad \times \exp\left\{-\frac{C}{2}\frac{{ \zeta }^{\bar{\nu}+2}}{B^{\bar{\nu}+1}}m^{\bar{\nu}+2}\right\}\,.
\end{align*}
By \eqref{E:DefFn}, $F_n=\bigcap_{m\ge \log_2 (Kn)}F_{n,m}$, 
we have
\begin{equation}\label{E: r_n(K)}
\begin{aligned}
    \P\lr{F_n^c}=&\P\lr{\bigcup_{m=\log_2(Kn)}^\infty F_{n,m}^c}\le \sum_{m=\lceil\log_2(Kn)\rceil-2}^\infty\P \lr{F_{n,m}^c}\\
    \le &C\exp\left\{-C\frac{\lr{\log K+n\log 2}^{\bar{\nu}+2}}{B^{\bar{\nu}+1}}\right\}K^{-4}\lr{\log_+ n}^3n^{-1} \exp\left\{\frac{B}{{{ \zeta }^{\frac{\bar{\nu}+2}{\bar{\nu}+1}}}}\right\}\\
    &\quad\quad\times \int_0^\infty \exp\left\{-\frac{C}{2}\frac{{ \zeta }^{\bar{\nu}+2}}{B^{\bar{\nu}+1}}z^{\bar{\nu}+2}\right\}\ud z\\
    \le& C\exp\left\{-C\frac{\lr{\log K+n\log 2}^{\bar{\nu}+2}}{B^{\bar{\nu}+1}}+\frac{B}{{{ \zeta }^{\frac{\bar{\nu}+2}{\bar{\nu}+1}}}}+ \frac{\bar{\nu}+1}{\bar{\nu}+2} \log B\right\}K^{-4}\lr{\log_+ n}^3n^{-1} \zeta^{-1}\\
    =:& r_n(K)\,,
\end{aligned} 
\end{equation}
where $B$ is given by~\eqref{E:DefB}. Let
\begin{align*}
    L_1=&-C\frac{\lr{\log K+n\log 2}^{\bar{\nu}+2}}{B^{\bar{\nu}+1}} \\
    =& -C \left(\log K + n \log 2\right)^{\bar{\nu}+2-2\theta_2} K ^{\bar{\nu}+1} \left(\log_+ n\right)^{-(\bar{\nu}+1)}\,,
\end{align*}    
and
\begin{align*}
     L_2=&\frac{B}{{{ \zeta }^{\frac{\bar{\nu}+2}{\bar{\nu}+1}}}}
     = C \left(\log K + n \log 2\right)^{\frac{2\theta_2}{\bar{\nu}+1}} K^{-1} \left(\log_+ n\right) \zeta^{-\frac{\bar{\nu}+2}{\bar{\nu}+1}}\,, 
\end{align*}
we see that the order of $n $ in $L_1$ is $\bar{\nu}+2-2\theta_2$ and the order of $n$ in $L_2$ is $\frac{2\theta_2}{\bar{\nu}+1}$. $\P\lr{F_n^c}$ is summable with respect to $n$ when $\bar{\nu}+2-{2\theta_2}>\frac{2\theta_2}{\bar{\nu}+1}$, that is $\theta_2<\frac{\bar{\nu}+1}{2}$ as in the assumption. Thus it holds that
\begin{align*}
    \P\left( \left[ \bigcap_{n=1}^{\infty} E_n \right]^c \right)=& \P\lr{\bigcup_{n=1}^\infty E_n^c}
        = \P\lr{\bigcup_{n=1}^\infty\Bigl[\bigcup_{m=1}^nE_m^c\setminus \bigcup_{m=1}^{n-1}E_m^c \Bigr]}
        \leq \sum_{n=1}^{\infty} \P(E_n^c \cap E_{n-1} \cap \cdots \cap E_1) \\
    \leq& \sum_{n=1}^{\infty} \P(F_n^c\cap E_{n-1} \cap \cdots \cap E_1) 
    \leq \sum_{n=1}^\infty \P\lr{F_n^c}<\infty\,,
\end{align*}
where we applied Lemma~\ref{L:E&F} in the second inequality.
Moreover, the $ r_n(K)$ defined as in \eqref{E: r_n(K)}
can be expressed in more detail as
\begin{equation*}
    \begin{aligned}
r_n(K) =& C \exp \bigg\{-C\left(\log K + n \log 2\right)^{\bar{\nu}+2-2\theta_2}(\log_+ n)^{-(\bar{\nu}+1)}K^{\bar{\nu}+1}\\
&\ \ + C \left(\log K + n \log 2\right)^{\frac{2\theta_2}{\bar{\nu}+1}}\zeta^{-\frac{\bar{\nu}+2}{\bar{\nu}+1}}+\frac{\bar{\nu}+1}{\bar{\nu}+2}\log \left( C \left(\log K + n \log 2\right)^{\frac{2\theta_2}{\bar{\nu}+1}}\frac{\log_+ n}{K}\right)\\
&\qquad -4\log K\bigg\} \left(\log_+ n\right)^3 n^{-1}\zeta^{-1}\\
&\qquad \eqqcolon Ce^{\kappa_n (K)}\left(\log_+ n\right)^3 n^{-1}\zeta^{-1}\,.
\end{aligned}
\end{equation*}
The exponential $\kappa_n(K)$ can be bound uniformly in $K$. Indeed, since $\theta_2<\frac{\bar{\nu}+1}{2}$, we have $\bar\nu+2-2\theta_2>1$ and $\frac{2\theta_2}{\bar{\nu}+1}<1$. Thus,
\begin{align*}
    \kappa_n(K)\le& -C\frac{n^{\bar\nu+2-2\theta_2}}{\lr{\log_+n}^{\bar\nu+1}}K^{\bar\nu+1}+C\lr{\log K}^{\frac{2\theta_2}{\bar\nu+1}}+C n^{\frac{2\theta_2}{\bar\nu+1}}+C\\
    &+C\log\log K+C\log n+C\log\log_+n-C\log K\\
    \le & -C n K^{\bar \nu+1}+C n^{\frac{2\theta_2}{\bar\nu+1}}+C\\
    \le &-C n+C\,.
\end{align*}
Hence, we obtain
\begin{align*}
    \sum_{n=1}^\infty r_n(K)\le C\sum_{n=1}^\infty e^{-Cn}\left(\log_+ n\right)^3 n^{-1}<\infty\,.
\end{align*}
By Dominated Convergence Theorem,
\begin{align*}
    \lim_{K\to \infty}\P\left( \left[ \bigcap_{n=1}^{\infty} E_n \right]^c \right)\le \sum_{n=1}^\infty \lim_{K\to \infty}\P\lr{F_n^c}\le \sum_{n=1}^\infty \lim_{K\to \infty} r_n(K)=0\,,
\end{align*}
which finishes the proof of Proposition~\ref{P:MainProp}.
\end{proof}

The proof of Theorem \ref{T:MainTheorem} is now completed. 

\section{Examples}\label{S:Examples}
In this section, we provide some examples of covariance functions that satisfy Assumption~\ref{A:CovAssumption}.
\subsection{Functions in $C_b^2(\R^3)$}
\begin{proposition}
    Let $C^2_b(\R^3)$ be the space of bounded functions, with bounded continuous derivatives up to order $2$. Then, any $f\in C_b^2(\R^3)$ satisfies Assumption~\ref{A:CovAssumption} with $\nu=\nu_1=\nu_2=2$ and $\gamma_1=\gamma_2=\mu_1=\mu_2=1$.
\end{proposition}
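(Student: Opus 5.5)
The plan is to verify the five inequalities of Assumption~\ref{A:CovAssumption} one at a time. Throughout I will use only $\|f\|_\infty$, $\|\nabla f\|_\infty$ and $\|D^2 f\|_\infty$, together with the elementary fact that in $\R^3$
\[
\int_{|z|\le t}\frac{\ud z}{|z|}=4\pi\int_0^t r\,\ud r=2\pi t^2 .
\]

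\textbf{The spatial conditions.} For \eqref{E:ZeroOrderDifference} I bound $f\le\|f\|_\infty$, which gives $Ct^2$, so $\nu=2$. For \eqref{E:FirstOrderDifference} the mean value theorem gives $|f(z+w)-f(z)|\le\|\nabla f\|_\infty|w|$, hence the bound $C|w|t^2$, so $\gamma_1=1$ and $\nu_1=2$. For \eqref{E:SecondOrderDifference} I use the second-order Taylor bound
\[
|2f(z)-f(z+w)-f(z-w)|\le\|D^2f\|_\infty|w|^2 ,
\]
which follows by writing the expression as $-\int_0^1(1-r)\bigl[\langle D^2f(z+rw)w,w\rangle+\langle D^2f(z-rw)w,w\rangle\bigr]\ud r$. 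This gives $C|w|^2t^2$, so $\gamma_2=1$ and $\nu_2=2$.

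\textbf{Condition \eqref{E:FirstOrderDifference_2}.} The two arguments $(s+h)\xi-(s+h)\eta$ and $s\xi-(s+h)\eta$ differ by $h\xi$, and $|\xi|=1$. So the integrand is at most $s\|\nabla f\|_\infty h$. Integrating over $S^2\times S^2$ (total mass $(4\pi)^2$) and over $s\in[0,T]$ yields $CT^2h$, so $\mu_1=1$.

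\textbf{Condition \eqref{E:SecondOrderDifference_2}.} Here the main step is to recognise a mixed second difference. Set $a=s\xi+s\eta$. The bracket equals
\[
f(a+h\xi+h\eta)-f(a+h\eta)-f(a+h\xi)+f(a),
\]
which equals $\int_0^1\int_0^1\langle D^2f(a+r h\xi+r' h\eta)\,h\xi,\,h\eta\rangle\,\ud r\,\ud r'$. Its absolute value is therefore at most $\|D^2f\|_\infty h^2$. Integrating against $s^2\,A(\ud\xi)A(\ud\eta)\,\ud s$ then gives $CT^3h^2$, so $2\mu_2=2$, that is, $\mu_2=1$.

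This last step is the only one requiring care: the estimate must come out as $h^2$ rather than $h$. The double integral representation of the mixed difference, which uses only boundedness of $D^2f$, is exactly what supplies this. All constants depend on $T$ and $\|f\|_{C^2_b}$, which Assumption~\ref{A:CovAssumption} permits.
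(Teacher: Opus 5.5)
Your proof is correct and follows essentially the same route as the paper. Each of the five conditions is checked directly from the bounds on $f$, $\nabla f$ and $D^2 f$: you use $\int_{|z|\le t}|z|^{-1}\,\mathrm{d}z=2\pi t^2$ for the three spatial conditions and integrate over $s\in[0,T]$ for \eqref{E:FirstOrderDifference_2}--\eqref{E:SecondOrderDifference_2}. Your explicit double-integral representation of the mixed difference $f(a+h\xi+h\eta)-f(a+h\eta)-f(a+h\xi)+f(a)$ is a more careful version of what the paper summarizes as ``similarly from the Mean Value Theorem,'' and it correctly gives the $h^2$ rate needed for $\mu_2=1$.
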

\begin{proof}
    Here we verify~\eqref{E:ZeroOrderDifference} through~\eqref{E:SecondOrderDifference_2} one by one. Since $f$ is bounded,
    \begin{align*}
        \int_{|z|\le t}\frac{f(z)}{|z|}\ud z\le \|f\|_\infty\int_{|z|\le t}\frac{1}{|z|}\ud z \le Ct^2\,.
    \end{align*}
    Thus,~\eqref{E:ZeroOrderDifference} is verified with $\nu=2$. For~\eqref{E:FirstOrderDifference}, we apply the Mean Value Theorem,
    \begin{align*}
    \int_{|z|\le t}\frac{|f(z+w)-f(z)|}{|z|}\ud z\le& \int_{|z|\le t}\int_0^1\frac{|\nabla f(z+\theta w)||w|}{|z|}\ud \theta \ud z\\
    \le &\|\nabla f\|_\infty|w|\int_{|z|\le t}\frac{1}{|z|}\ud z\le C|w|t^2\,,
\end{align*}
so~\eqref{E:FirstOrderDifference} is verified with $\gamma_1=1$ and $\nu_1=2$. For~\eqref{E:SecondOrderDifference}, also from the Mean Value Theorem and the boundedness of the second derivative, we obtain
\begin{align*}
    &\int_{|z| \leq t} \frac{|f(z + w) + f(z - w) - 2f(z)|}{|z|} \, dz\\
    \le&  \int_{|z|\leq t}   \|\nabla^2 f\|_\infty \frac{1}{|z|} |w|^2 dz \leq C |w|^2t^2\,.
\end{align*}
Therefore,~\eqref{E:SecondOrderDifference} holds for $\gamma_2=1$ and $\nu_2=2$.
For~\eqref{E:FirstOrderDifference_2} and~\eqref{E:SecondOrderDifference_2}, similarly from the Mean Value Theorem, we have
\begin{align*}
    &\int_0^T\int_{S^2\times S^2} s\left|f\lr{(s+h)\xi-(s+h)\eta}-f\lr{(s\xi-(s+h)\eta)}\right|A(\ud \xi )A(\ud \eta)\ud s\\
    &\quad\quad\le  C\int_0^t \|\nabla f\|_\infty sh\ud s\le C h\,,
\end{align*}
and
\begin{align*}
        &\int_0^T\int_{S^2\times S^2}s^2\bigg|f\lr{(s+h)\xi+(s+h)\eta}-f\lr{s\xi+(s+h)\eta}\\
    &\quad-f\lr{(s+h)\xi+s\eta}+f(s\xi+s\eta)\bigg|A(\ud \xi)A(\ud \eta)\ud s\le C\|\nabla^2 f\|_\infty h^2\int_0^t s^2\ud s  \le C\|\nabla^2 f\|_\infty h^2\,.
    \end{align*}
So,~\eqref{E:FirstOrderDifference_2} and~\eqref{E:SecondOrderDifference_2} hold for $\mu_1=\mu_2=1$. The proof is thus completed.
\end{proof}

\subsection{Riesz kernel}
\begin{proposition}
    For $0<\beta<2$, let $f$ be the Riesz kernel.
    \begin{align}\label{E:Riesz kernel}
        f(z)=|z|^{-\beta}\,.
    \end{align}
    Then, for any $0<\gamma_1,\mu_1<\min\{2-\beta,1\}$ and $0<\gamma_2,\mu_2<1-\beta/2$, $f$ satisfies Assumption~\ref{A:CovAssumption} with parameters
$\nu=2-\beta$, $\nu_1=2-\beta-\gamma_1$, $\nu_2=2-\beta-2\gamma_2$, and the corresponding choices of $\gamma_1$, $\gamma_2$, $\mu_1$, and $\mu_2$.
\end{proposition}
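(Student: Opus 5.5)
We check the five conditions of Assumption~\ref{A:CovAssumption} one at a time. The main tool is homogeneity: $f(\lambda z)=\lambda^{-\beta}f(z)$, so after substituting $z=t z'$ each integral over $\{|z|\le t\}$ becomes a power of $t$ times an integral over the unit ball, or over a ball of radius comparable to $|w|/t$.

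\textbf{Condition~\eqref{E:ZeroOrderDifference}.} In polar coordinates,
\[
\int_{|z|\le t}|z|^{-\beta-1}\,\ud z = 4\pi\int_0^t r^{1-\beta}\,\ud r = Ct^{2-\beta},
\]
which is finite because $\beta<2$. This gives $\nu=2-\beta$.

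\textbf{Condition~\eqref{E:FirstOrderDifference}.} We interpolate between two elementary bounds:
\[
|f(z+w)-f(z)|\le |f(z+w)|+|f(z)|,
\qquad
|f(z+w)-f(z)|\le C|w|\,\bigl(|z|^{-\beta-1}+|z+w|^{-\beta-1}\bigr).
\]
The second comes from the mean value theorem, applied on the region where the segment $[z,z+w]$ stays away from the origin. Combining them with exponents $1-\gamma_1$ and $\gamma_1$ gives, away from the singularities,
\[
|f(z+w)-f(z)|\le C|w|^{\gamma_1}\bigl(|z|^{-\beta-\gamma_1}+|z+w|^{-\beta-\gamma_1}\bigr).
\]
It is cleaner to split into $\{|z|\le 2|w|\}$, where we use the crude bound, and $\{|z|>2|w|\}$, where we use the gradient bound and interpolate. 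Near $z=-w$, the term $|z+w|^{-\beta-\gamma_1}$ is integrable in $\R^3$ against $|z|^{-1}$, because $\beta+\gamma_1<2<3$ and $|z|\approx|w|$ there. Integrating against $|z|^{-1}$ over $\{|z|\le t\}$ then gives
\[
C|w|^{\gamma_1}\,t^{2-\beta-\gamma_1},
\]
provided $\gamma_1<2-\beta$, which is exactly the stated constraint. This gives $\nu_1=2-\beta-\gamma_1$. The same computation also needs to absorb the case $t<|w|$, where the bound reads $t^{2-\beta}\le |w|^{\gamma_1}t^{2-\beta-\gamma_1}$.

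\textbf{Condition~\eqref{E:SecondOrderDifference}.} This is identical in structure. On $\{|z|>2|w|\}$ we use the Taylor bound $C|w|^2|z|^{-\beta-2}$, and elsewhere the crude bound. Interpolating with exponent $\gamma_2$ gives $|w|^{2\gamma_2}|z|^{-\beta-2\gamma_2}$. The radial integral converges when $2\gamma_2<2-\beta$, yielding $\nu_2=2-\beta-2\gamma_2$.

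\textbf{Conditions~\eqref{E:FirstOrderDifference_2} and~\eqref{E:SecondOrderDifference_2}.} These are the harder part, since the arguments now lie on spheres. The plan is to use rotation invariance to fix $\eta$ at the north pole. Then $|a\xi-b\eta|^2=a^2+b^2-2ab\cos\varphi$, and we integrate over the polar angle $\varphi$ with measure $\sin\varphi\,\ud\varphi$. The one-sphere integral then has the explicit form
\[
\int_{S^2}|a\xi-b\eta|^{-\beta}A(\ud\xi)=\frac{2\pi}{ab(2-\beta)}\Bigl[(a+b)^{2-\beta}-|a-b|^{2-\beta}\Bigr].
\]
This reduces~\eqref{E:FirstOrderDifference_2} to a one-dimensional estimate. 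Denote this bracket expression by $\Phi(a,b)$; in absolute value the integrand is bounded by a difference $\Phi(s+h,s+h)-\Phi(s,s+h)$. The main obstacle here is that $|f(\cdot)-f(\cdot)|$ sits inside the $\xi$-integral, so we cannot simply integrate $f$ first. I would handle this pointwise in $\varphi$, as in the first-order step. The mean value bound in the radial variable gives
\[
|f((s+h)\xi-(s+h)\eta)-f(s\xi-(s+h)\eta)|\le Ch\,|\rho\,\xi-(s+h)\eta|^{-\beta-1}
\]
for some $\rho\in[s,s+h]$, away from the near-coincidence set. Interpolating with the crude bound gives $h^{\mu_1}|\cdot|^{-\beta-\mu_1}$. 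Near the pole, the angular integral of $|\cdot|^{-\beta-\mu_1}$ behaves like
\[
\int\frac{\varphi\,\ud\varphi}{\bigl(h^2+s^2\varphi^2\bigr)^{(\beta+\mu_1)/2}},
\]
which is bounded by $Cs^{-\beta-\mu_1}$ times a possible logarithm when $\beta+\mu_1<2$. Multiplying by $s$ and integrating over $s\in(0,T)$ then requires $1-\beta-\mu_1>-1$, that is, $\mu_1<2-\beta$; the constraint $\mu_1<1$ comes from the interpolation. For~\eqref{E:SecondOrderDifference_2}, we write the mixed second difference as $\int_s^{s+h}\int_s^{s+h}\partial_a\partial_b f(a\xi+b\eta)\,\ud a\,\ud b$. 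This gives the bound $h^2|\cdot|^{-\beta-2}$, and interpolating with the crude bound gives $h^{2\mu_2}|\cdot|^{-\beta-2\mu_2}$. The $s^2$ weight makes the $s$-integral converge as long as $\beta+2\mu_2<2$. The delicate point, which I would check carefully, is the region where $h$ is comparable to $s$ or $|a\xi\pm b\eta|\lesssim h$. There the crude bound must be used and contributes $\int_0^{h}s^{2-\beta}\,\ud s\sim h^{3-\beta}$, which is harmless.
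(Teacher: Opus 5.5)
Your proof is correct, but it takes a different route from the paper's.

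\textbf{What the paper does.} For \eqref{E:FirstOrderDifference} and \eqref{E:SecondOrderDifference} the paper does not split pointwise. It writes the increments of $|z|^{-\beta}$ through the Dalang--Sanz-Sol\'e representation (Lemma~\ref{L:RieszHolder}, with $a=\nu_1+1$, $b=\gamma_1$, resp.\ $a=\nu_2+1$, $b=2\gamma_2$). It then splits the auxiliary variable at $|v|=3$ and removes the shift $hv$ with the positive-definiteness rearrangement inequality of Lemma~\ref{L:RearrangementIneq}. The remaining conditions \eqref{E:ZeroOrderDifference}, \eqref{E:FirstOrderDifference_2} and \eqref{E:SecondOrderDifference_2} are simply cited from \cite{hu.huang.ea:14:on}.

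\textbf{What each route buys.} Your near/far split at $|z|=2|w|$, combined with $\min(A,B)\le A^{\gamma}B^{1-\gamma}$, is more elementary and self-contained. It also covers the spherical conditions that the paper outsources. It shows directly where the parameter restrictions come from: $\gamma_1,\mu_1\le 1$ comes from the interpolation, while $\gamma_1,\mu_1<2-\beta$ and $\gamma_2,\mu_2<1-\beta/2$ come from integrability. The paper's route buys brevity by reusing the Dalang--Sanz-Sol\'e representation and the wave-kernel identity of Lemma~\ref{L:KernelConv}.

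\textbf{Two steps can be stated more cleanly.}

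In the flat near region you only need the crude bound together with $\int_{|z|\le r}|z\pm w|^{-\beta}|z|^{-1}\ud z\le Cr^{2-\beta}$ for $r\le 2|w|$ (split at $|z|=|w|/2$). This gives $C\min(t,|w|)^{2-\beta}\le C|w|^{\gamma_1}t^{2-\beta-\gamma_1}$; no interpolated singularity at $z=-w$ is involved.

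On the spheres, your caveats (the near-coincidence set, a possible logarithm, the region $s\lesssim h$) are unnecessary. Take $a,b\in[s,s+h]$ and let $\varphi$ be the angle between $\xi$ and $\eta$. Then $|a\xi-b\eta|^2=(a-b)^2+4ab\sin^2(\varphi/2)\ge 4\pi^{-2}s^2\varphi^2$. The same holds for $|a\xi+b\eta|$ with $\varphi$ the angle between $\xi$ and $-\eta$. Moreover $|\partial_a f(a\xi-b\eta)|\le\beta|a\xi-b\eta|^{-\beta-1}$ and $|\partial_a\partial_b f(a\xi+b\eta)|\le C|a\xi+b\eta|^{-\beta-2}$. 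Hence both the derivative bound and the crude bound are uniformly controlled by powers of $s\varphi$. The interpolated bounds $Ch^{\mu_1}(s\varphi)^{-\beta-\mu_1}$ and $Ch^{2\mu_2}(s\varphi)^{-\beta-2\mu_2}$ therefore hold everywhere.

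For \eqref{E:FirstOrderDifference_2}, $\int_0^\pi\varphi^{1-\beta-\mu_1}\ud\varphi\cdot\int_0^T s^{1-\beta-\mu_1}\ud s<\infty$ exactly when $\mu_1<2-\beta$, with no logarithm. For \eqref{E:SecondOrderDifference_2}, the binding constraint $\mu_2<1-\beta/2$ comes from the angular integral $\int_0^\pi\varphi^{1-\beta-2\mu_2}\ud\varphi$. The $s$-integral of $s^{2-\beta-2\mu_2}$ is then automatically finite.
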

\begin{proof}
    Here we only prove~\eqref{E:FirstOrderDifference} and~\eqref{E:SecondOrderDifference}. For~\eqref{E:ZeroOrderDifference},~\eqref{E:FirstOrderDifference_2} and~\eqref{E:SecondOrderDifference_2}, we refer readers to~\cite[Proposition 5.3]{hu.huang.ea:14:on}.
    To prove~\eqref{E:FirstOrderDifference}, for any $w\in \R^3$, let $\xi=w/|w|$ be the corresponding unit vector and denote $h=|w|$. Noting that $\nu_1+1+\gamma_1=3-\beta$, we apply Lemma~\ref{L:RieszHolder} (see Eq.\eqref{E_:RieszDiff}) with $a=\nu_1+1$ and $b=\gamma_1$ to see that
\begin{equation}\label{E_:RieszFirst}
    \begin{aligned}
        \int_{|u|\le t}&\frac{\left||u+h\xi|^{-\beta}-|u|^{-\beta}\right|}{|u|}\ud u\\
        &\le  h ^{\gamma_1}\int_{|u|\le t}\ud u\int_{\R^3}\ud  v\frac{|u-h v|^{\nu_1-2}}{|u|}\left|| v+\xi|^{\gamma_1-3}-| v|^{\gamma_1-3}\right|\\
    &\le  h ^{\gamma_1}\int_{|u|\le t}\ud u\int_{| v|\le 3}\ud  v \frac{|u-h v|^{\nu_1-2}}{|u|}\lr{| v+\xi|^{\gamma_1-3}+| v|^{\gamma_1-3}}\\
    &\quad+ h ^{\gamma_1}\int_{|u|\le t}\ud u\int_{| v|> 3}\ud  v \frac{|u-h v|^{\nu_1-2}}{|u|}\left|| v+\xi|^{\gamma_1-3}-| v|^{\gamma_1-3}\right|\\
    &\eqqcolon A_1+A_2
    \end{aligned}
\end{equation}
For $A_1$, we apply Lemma~\ref{L:RearrangementIneq} to obtain
\begin{equation}\label{E_:RieszSmall_1}
    \begin{aligned}
         h ^{\gamma_1}&\int_{|u|\le t}\ud u\int_{| v|\le 3}\ud  v \frac{|u-h v|^{\nu_1-2}}{|u|}| v+\xi|^{\gamma_1-3}\\
        &\le  h ^{\gamma_1}\int_{|u|\le t}\ud u\int_{| v|\le 4}\ud  v \frac{|u-h v+h\xi|^{\nu_1-2}}{|u|}| v|^{\gamma_1-3}\\
    &\le  h ^{\gamma_1}\int_{|u|\le t}\ud u\int_{| v|\le 4}\ud  v \frac{|u|^{\nu_1-2}}{|u|}| v|^{\gamma_1-3}\\
    &\le C h ^{\gamma_1}t^{\nu_1}\,,
    \end{aligned}
\end{equation}
and
\begin{equation}\label{E_:RieszSmall_2}
    \begin{aligned}
         h ^{\gamma_1}\int_{|u|\le t}\ud u\int_{| v|\le 3}\ud  v \frac{|u-h v|^{\nu_1-2}}{|u|}| v|^{\gamma_1-3}\le& 
      h ^{\gamma_1}\int_{|u|\le t}\ud u\int_{| v|\le 3}\ud  v \frac{|u|^{\nu_1-2}}{|u|}| v|^{\gamma_1-3}\\
    \le& C h ^{\gamma_1}t^{\nu_1}\,.
    \end{aligned}
\end{equation}
So $A_1 \leq C h^{\gamma_1}t^{\nu_1}$. 

For $A_2$, from the Mean Value Theorem, we see that $\left||v+\xi|^{\gamma_1-3}-|v|^{\gamma_1-3}\right|\le C|v|^{\gamma_1-4}$ for $|v|>3$. Applying Lemma~\ref{L:RearrangementIneq} again, we see that
\begin{equation}\label{E_:RieszBig}
    \begin{aligned}
         h ^{\gamma_1}&\int_{|u|\le t}\ud u\int_{| v|> 3}\ud  v \frac{|u-h v|^{\nu_1-2}}{|u|}\left|| v+\xi|^{\gamma_1-3}-| v|^{\gamma_1-3}\right|\\
        \le &C h ^{\gamma_1}\int_{|u|\le t}\ud u\int_{| v|> 3}\ud  v \frac{|u-h v|^{\nu_1-2}}{|u|}| v|^{\gamma_1-4}\\
    \le &C h ^{\gamma_1}\int_{|u|\le t}\ud u\int_{| v|> 3}\ud  v \frac{|u|^{\nu_1-2}}{|u|}| v|^{\gamma_1-4}\\
    \le & C h ^{\gamma_1}t^{\nu_1}\,.
    \end{aligned}
\end{equation}
where the last line holds since $\gamma_1-4<-3$.
A combination of~\eqref{E_:RieszSmall_1},~\eqref{E_:RieszSmall_2},~\eqref{E_:RieszBig}, together with~\eqref{E_:RieszFirst} shows
\begin{align}\label{E_:RieszFirstResult}
        \int_{|u|\le t}\frac{\left||u+h\xi|^{-\beta}-|u|^{-\beta}\right|}{|u|}\ud u \le Ct^{\nu_1} h ^{\gamma_1}\,,
\end{align}
which verifies~\eqref{E:FirstOrderDifference}.

For \eqref{E:SecondOrderDifference}, we apply Lemma~\ref{L:RieszHolder} (see Eq.\eqref{E_:RieszDiff_2}) with $a=\nu_2+1$ and $b=2\gamma_2$ to find that
\begin{align*}
    &\int_{|u|\le t}\frac{\left|f(u+h\xi)+f(u-h\xi)-2f(u)\right|}{|u|}\ud u\\
    &\qquad\le
     h ^{2\gamma_2}\int_{|u|\le t}\int_{\R^3} \frac{|u-h v|^{\nu_2-2}}{|u|}\left|| v+\xi|^{2\gamma_2-3}+| v-\xi|^{2\gamma_2-3}-2| v|^{2\gamma_2-3}\right|\ud  v\ud u\\
    &\qquad\le  h ^{2\gamma_2}\int_{|u|\le t}\int_{|v|\le 3} \frac{|u-h v|^{\nu_2-2}}{|u|}\left|| v+\xi|^{2\gamma_2-3}+| v-\xi|^{2\gamma_2-3}-2| v|^{2\gamma_2-3}\right|\ud  v\ud u\\
    &\qquad\quad + h ^{2\gamma_2}\int_{|u|\le t}\int_{|v|>3} \frac{|u-h v|^{\nu_2-2}}{|u|}\left|| v+\xi|^{2\gamma_2-3}+| v-\xi|^{2\gamma_2-3}-2| v|^{2\gamma_2-3}\right|\ud  v\ud u\\
    &\qquad\eqqcolon B_1+B_2\,.
\end{align*}
Here we have followed the same idea as in~\eqref{E_:RieszFirst}. For $B_1$,
\begin{align*}
    B_1=& h ^{2\gamma_2}\int_{|u|\le t}\int_{| v|\le 3} \frac{|u-h v|^{\nu_2-2}}{|u|}\left|| v+\xi|^{2\gamma_2-3}+| v-\xi|^{2\gamma_2-3}-2| v|^{2\gamma_2-3}\right|\ud  v\ud u\\
    \le & h ^{2\gamma_2}\int_{|u|\le t}\int_{| v|\le 3} \frac{|u-h v|^{\nu_2-2}}{|u|}\left(| v+\xi|^{2\gamma_2-3}+| v-\xi|^{2\gamma_2-3}+2| v|^{2\gamma_2-3}\right)\ud  v\ud u\,.
\end{align*}
For each term, similar to~\eqref{E_:RieszSmall_1} and~\eqref{E_:RieszSmall_2} we get
\begin{align}\label{E:K1}
    B_1\le C h ^{2\gamma_2}t^{\nu_2}\,.
\end{align}
For $B_2$, since $2\gamma_2-5<-3$, we apply the Mean Value Theorem to get
\begin{align}\label{E_:K2}
     \notag B_2=& h ^{2\gamma_2}\int_{|u|\le t}\int_{| v|> 3} \frac{|u-h v|^{\nu_2-2}}{|u|}\left|| v+\xi|^{2\gamma_2-3}+| v-\xi|^{2\gamma_2-3}-2| v|^{2\gamma_2-3}\right|\ud  v\ud u\\
     \notag\le &C h ^{2\gamma_2}\int_{|u|\le t}\ud u\int_{| v|> 3}\ud  v \frac{|u-h v|^{\nu_2-2}}{|u|}| v|^{2\gamma_2-5}\\
    \notag\le &C h ^{2\gamma_2}\int_{|u|\le t}\ud u\int_{| v|> 3}\ud  v \frac{|u|^{\nu_2-2}}{|u|}| v|^{2\gamma_2-5}\\
    \le & C h ^{2\gamma_2}t^{\nu_2}\,.
\end{align}
A combination~\eqref{E:K1} and~\eqref{E_:K2} shows that
\begin{align*}
    \int_{|u|\le t}\frac{\left|f(u+h\xi)+f(u-h\xi)-2f(u)\right|}{|u|}\ud u\le  C h ^{2\gamma_2}t^{\nu_2}\,,
\end{align*}
which verifies~\eqref{E:SecondOrderDifference}. The proof is thus completed.
\end{proof}


\subsection{Bessel kernel}
\begin{proposition}
    Let $f(x)$ be the Bessel kernel of order $\alpha>3$, defined by $$f(x)=\int_0^\infty v^{\frac{\alpha-5}{2}}e^{-v}e^{\frac{-|x|^2}{4v}}\ud v.$$ Then, $f(x)$ satisfies Assumption~\ref{A:CovAssumption} with any $0<\nu<1$, $\nu_1=\nu_2=2$, $0<\gamma_1,\mu_1<\min\lr{\alpha-3,1}$ and $0<\gamma_2,\mu_2<\min\lr{\alpha-3,2}$.
\end{proposition}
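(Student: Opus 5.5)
The plan is to work directly with the integral representation $f(x)=\int_0^\infty v^{\frac{\alpha-5}{2}}e^{-v}e^{-|x|^2/(4v)}\,\ud v$. All five conditions in Assumption~\ref{A:CovAssumption} should then follow from pointwise bounds on the Gaussian factor $g_v(x)\coloneqq e^{-|x|^2/(4v)}$ and its increments, integrated against the weight $v^{\frac{\alpha-5}{2}}e^{-v}$. The first step is to record that $f$ is bounded. Indeed $f(x)\le f(0)=\int_0^\infty v^{\frac{\alpha-5}{2}}e^{-v}\,\ud v=\Gamma\bigl(\tfrac{\alpha-3}{2}\bigr)<\infty$, and this is exactly where $\alpha>3$ is used. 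Consequently $\int_{|z|\le t}\frac{f(z)}{|z|}\,\ud z\le Ct^2\le C_T t^{\nu}$ for $t\in[0,T]$ and any $0<\nu<1$ (in fact any $\nu\le2$), which gives~\eqref{E:ZeroOrderDifference}.

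Next I would prove uniform Hölder-type bounds on first and second differences of $f$. Since $|\nabla g_v(x)|=\frac{|x|}{2v}g_v(x)\le Cv^{-1/2}$ and $|\nabla^2 g_v(x)|\le Cv^{-1}$ uniformly in $x$, interpolating with the trivial bound $|g_v|\le1$ gives
\begin{align*}
|g_v(z+w)-g_v(z)|\le C\min\{1,|w|v^{-1/2}\}\le C|w|^{\gamma}v^{-\gamma/2}\,,
\end{align*}
and similarly $|2g_v(z)-g_v(z+w)-g_v(z-w)|\le C|w|^{2\gamma}v^{-\gamma}$ for $\gamma\in(0,1]$. Integrating in $v$, these give $|f(z+w)-f(z)|\le C|w|^{\gamma_1}$ provided $\frac{\alpha-5-\gamma_1}{2}>-1$, i.e.\ $\gamma_1<\alpha-3$. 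They also give $|2f(z)-f(z+w)-f(z-w)|\le C|w|^{2\gamma_2}$ under the corresponding integrability condition on $v^{\frac{\alpha-5}{2}-\gamma_2}$ near $v=0$. Inserting these uniform bounds into~\eqref{E:FirstOrderDifference} and~\eqref{E:SecondOrderDifference} and using $\int_{|z|\le t}|z|^{-1}\,\ud z=Ct^2$ yields $\nu_1=\nu_2=2$.

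For~\eqref{E:FirstOrderDifference_2}, note that $|(s+h)\xi-s\xi|=h$ on $S^2$. The first-difference bound therefore gives an integrand of at most $Csh^{\mu_1}$, and integrating over $[0,T]\times S^2\times S^2$ gives $Ch^{\mu_1}$ for $\mu_1<\min(\alpha-3,1)$. For~\eqref{E:SecondOrderDifference_2}, the quantity is a mixed second difference $f(a+h\xi+h\eta)-f(a+h\eta)-f(a+h\xi)+f(a)$ with $a=s\xi+s\eta$. For $g_v$ I would write it as $\int_0^h\!\int_0^h \xi^{\top}\nabla^2 g_v(a+r\xi+r'\eta)\,\eta\,\ud r\,\ud r'$, which is bounded by $Ch^2v^{-1}$, and combine this with the trivial bound $4$ to get $Ch^{2\mu_2}v^{-\mu_2}$.

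The main obstacle is matching the exponent ranges claimed in the statement: the crude $v$-integrability above requires $\mu_2<\frac{\alpha-3}{2}$, and similarly for $\gamma_2$. To reach $\min(\alpha-3,2)$ I would try not to apply the pointwise interpolation to the whole product. Instead I would exploit the product structure $\min\{1,hv^{-1/2}\}^2$, bounding each of the two increments separately by $\min\{1,hv^{-1/2}\}$ through a factorization of the mixed difference. I would also use the extra $s^2$ and sphere averaging, which smooth out the behaviour of $f$ near the origin. If this refinement fails, I would fall back to the restricted range for $\gamma_2$ and $\mu_2$, which is still enough for Theorem~\ref{T:MainTheorem}.
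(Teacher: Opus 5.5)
Your argument is correct and complete, and it takes a different, more elementary route than the paper.

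The paper verifies only \eqref{E:FirstOrderDifference} and \eqref{E:SecondOrderDifference} itself, and cites Hu--Huang et al.\ for the other three conditions. For the first-order condition it uses $|e^{-a}-e^{-b}|\le |a-b|^{\gamma}(e^{-a}\vee e^{-b})$. This leaves the $z$-dependent factor $(|z+w|^{\gamma}+|z|^{\gamma})v^{-\gamma}$ inside the $z$-integral. To show $I,J\le Ct^2v^{\gamma/2}$ it then needs the rescaling $x=z/\sqrt{v}$, an interpolation between $t^{\gamma+2}$ and $v^{(\gamma+2)/2}$, and the Plancherel bound of Lemma~\ref{L:Rearrangement_2}. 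For the second-order condition it quotes a representation from the same reference and bounds the exponentials by $1$.

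You instead use $\sup_x|\nabla g_v|\lesssim v^{-1/2}$, $\sup_x\|\nabla^2 g_v\|\lesssim v^{-1}$ and $0\le g_v\le 1$. These give bounds on the first, second and mixed differences of $f$ that are uniform in the base point. After that, all five conditions reduce to integrating $|z|^{-1}$ over a ball, or $s\,\ud s$ and $s^2\,\ud s$ against $A(\ud\xi)A(\ud\eta)$. This is self-contained: no citation is needed for \eqref{E:ZeroOrderDifference}, \eqref{E:FirstOrderDifference_2} or \eqref{E:SecondOrderDifference_2}. It gives the same exponent ranges, and it even yields $\nu=2$, which improves $\bar\nu$ in Theorem~\ref{T:MainTheorem}. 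The paper's template has the advantage of not relying on $f$ being bounded and uniformly H\"older, so it is the one that extends to singular kernels ($\alpha\le 3$). For $\alpha>3$, yours is the cleaner proof.

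The ``main obstacle'' you describe, however, does not exist, and you should not pursue the refinement. Your range is $\gamma_2,\mu_2\le 1$ with $\gamma_2,\mu_2<\frac{\alpha-3}{2}$, i.e.\ $2\gamma_2,2\mu_2<\min(\alpha-3,2)$. This is exactly what the paper establishes. Its proof of \eqref{E:SecondOrderDifference} ends with the bound $C|w|^{\gamma'}t^2$ for $0<\gamma'<\min(\alpha-3,2)$, and the exponent in that condition is $\gamma'=2\gamma_2$. So the statement's ``$\gamma_2,\mu_2<\min(\alpha-3,2)$'' must be read as a condition on $2\gamma_2,2\mu_2$; the Assumption forces $\gamma_2,\mu_2\le 1$ anyway.

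Read literally, the claim is false for $\gamma_2$:
\begin{itemize}
\item Write $f$ as a function of $r=|x|$. The substitution $v=r^2u$ gives $f(r)-f(3r)\ge c\,r^{\alpha-3}$ for $r\le 1$.
\item Since $f$ is radially decreasing, for $|z|\le t\le |w|/4$ one gets $2f(z)-f(z+w)-f(z-w)\ge 2\bigl(f(|w|/4)-f(3|w|/4)\bigr)\ge c|w|^{\alpha-3}$.
\item Hence the left side of \eqref{E:SecondOrderDifference} is at least $c|w|^{\alpha-3}t^2$. With $\nu_2=2$, this forces $2\gamma_2\le\alpha-3$.
\end{itemize}

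The factorization idea also could not improve anything. The product $\min\{1,hv^{-1/2}\}^2$ equals $\min\{1,h^2v^{-1}\}$, which is precisely the Hessian bound you already used. Sphere averaging with the $s^2$ weight might buy something for $\mu_2$, but that goes beyond what the paper proves or needs. So present your range as the result, not as a fallback.
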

\begin{proof}
    As in the Riesz kernel case, we refer the proof of~\eqref{E:ZeroOrderDifference},~\eqref{E:FirstOrderDifference_2} and~\eqref{E:SecondOrderDifference_2} to~\cite[Proposition 5.4]{hu.huang.ea:14:on}. We adapt the proof of the cited proposition to show~\eqref{E:FirstOrderDifference} and~\eqref{E:SecondOrderDifference}. We apply $|e^{-a}-e^{-b}|\le |a-b|^\gamma\lr{e^{-a}\vee e^{-b}}$, for any $a,b\ge 0$ and $0\le \gamma\le 1$ to get
\begin{equation}
\begin{aligned}
    \left| e^{-\frac{|z+w|^2}{4 v}} - e^{-\frac{|z|^2}{4 v}} \right| \leq& \left| \frac{1}{4 v} \right|^{\gamma} | |z+w|^2 - |z|^2 |^{\gamma} \left( e^{-\frac{|z+w|^2}{4 v}} \vee e^{-\frac{|z|^2}{4 v}} \right) \\
    \leq& C |w|^{\gamma} (|z+w|^{\gamma} + |z|^{\gamma}) \frac{1}{ v^{\gamma}} \left( e^{-\frac{|z+w|^2}{4 v}} + e^{-\frac{|z|^2}{4 v}} \right).
\end{aligned}
\end{equation}
 Consequently,
\begin{align}\label{E_:BesselSecond}
    \int_{|z|\le t}\frac{\left|f(z+w)-f(z)\right|}{|z|}\ud z\le |w|^{\gamma}\int_0^\infty v^{\frac{\alpha-5}{2}-\gamma}e^{-v}\lr{I(t,w,v)+J(t,w,v)}\ud v\,,
\end{align}
where
\begin{equation}\label{E_:EstimateI}
\begin{aligned}
    I(t,w,v)=&\int_{|z|\le t}\lr{|z+w|^\gamma+|z|^\gamma}\frac{e^{-\frac{|z+w|^2}{4v}}}{|z|}\ud z\\
    =&\int_{|z|\le t}{|z+w|^\gamma}\frac{e^{-\frac{|z+w|^2}{4v}}}{|z|}\ud z+\int_{|z|\le t}{|z|^\gamma}\frac{e^{-\frac{|z+w|^2}{4v}}}{|z|}\ud z\\
    \eqqcolon& I_1(t,w,v)+I_2(t,w,v)\,,
\end{aligned}
\end{equation}
and
\begin{align*}
    J(t,w,v)=&\int_{|z|\le t}\lr{|z+w|^\gamma+|z|^\gamma}\frac{e^{-\frac{|z|^2}{4v}}}{|z|}\ud z\\
    =&\int_{|z|\le t}{|z+w|^\gamma}\frac{e^{-\frac{|z|^2}{4v}}}{|z|}\ud z+\int_{|z|\le t}{|z|^\gamma}\frac{e^{-\frac{|z|^2}{4v}}}{|z|}\ud z\\
    \eqqcolon& J_1(t,w,v)+J_2(t,w,v)\,.
\end{align*}
For $I_1(t,w,v)$, noting that $|x|^\gamma e^{-\frac{|x|^2}{4}}\le Ce^{-\frac{|x|^2}{8}}$, by the change of variable $x=\frac{z}{\sqrt{v}}$, we have
\begin{align}\label{E_:EstimateI_1}
    \notag I_1(t,w,v)\le& C\int_{|x|\le \frac{t}{\sqrt{v}}}v^{\frac{\gamma+2}{2}}\frac{1}{|x|}e^{-\frac{\left|x+\frac{w}{\sqrt{v}}\right|^2}{8}}\ud x\\
    \le &C\int_{|x|\le \frac{t}{\sqrt{v}}}v^{\frac{\gamma+2}{2}}\frac{1}{|x|}\ud x\le Cv^{\frac{\gamma}{2}}t^2\,.
\end{align}
For $I_2(t,w,v)$, on the one hand, we bound the exponential functions by $1$ to see that
\begin{align*}
    I_2(t,w,v)\le& \int_{|z|\le t}|z|^{\gamma-1}\ud z\le C t^{\gamma+2}\,.
\end{align*}
On the other hand, with the change of variable $\frac{z+w}{\sqrt{v}}=x$ we observe that
\begin{align*}
    I_2(t,w,v)= &\int_{\left|x-\frac{w}{\sqrt{v}}\right|\le \frac{t}{\sqrt{v}}}{v^{\frac{\gamma+2}{2}}\left|x-\frac{w}{\sqrt{v}}\right|^{\gamma-1}}e^{-\frac{|x|^2}{4}}\ud x\\
    \le &v^{\frac{\gamma+2}{2}}\int_{\R^3}{\left|x-\frac{w}{\sqrt{v}}\right|^{\gamma-1}}e^{-\frac{|x|^2}{4}}\ud x\,.
\end{align*}
The integral above is bounded uniformly in $w$ and $v$ by Lemma~\ref{L:Rearrangement_2}. Consequently, we obtain
\begin{align*}
    I_2(t,w,v)\le Cv^{\frac{\gamma+2}{2}}\,,
\end{align*}
where the constant $C$ is independent of $t$.
By interpolation, for all $0\le \theta\le 1$
\begin{align*}
    I_2(t,w,v)\le Ct^{\lr{\gamma+2}\theta}v^{\lr{\frac{\gamma+2}{2}}(1-\theta)}\,.
\end{align*}
We can take $\theta=\frac{2}{\gamma+2}$ to get
\begin{align}\label{E_:EstimateI_2}
    I_2(t,w,v)\le Ct^2v^{\frac{\gamma}{2}}\,.
\end{align}
By combining~\eqref{E_:EstimateI},~\eqref{E_:EstimateI_1} and~\eqref{E_:EstimateI_2}, we see that
\begin{align*}
    I(t,w,v)\le Ct^2v^{\frac{\gamma}{2}}\,.
\end{align*}
Similarly,
\begin{align*}
    J(t,w,v)\le Ct^2v^{\frac{\gamma}{2}}\,.
\end{align*}
Consequently, when $0<\gamma<\alpha-3$, we can bound~\eqref{E_:BesselSecond} by
\begin{align*}
    \int_{|z|\le t}\frac{\left|f(z+w)-f(z)\right|}{|z|}\ud z\le C|w|^\gamma t^2\int_0^\infty v^{\frac{\alpha-\gamma-5}{2}}e^{-v}\ud v\le C|w|^\gamma t^2\,.
\end{align*}
 So,~\eqref{E:FirstOrderDifference} holds for all $0<\gamma<\min\lr{1,\alpha-3}$.
According to~\cite[the last line of page 388]{hu.huang.ea:14:on}, for any $0\le \gamma'\le 2$,
\begin{align*}
    \int_{|z|\le t}&\frac{\left|f(z+w)+f(z-w)-2f(z)\right|}{|z|}\ud z\le C|w|^{\gamma'}\int_0^\infty\ v^{\frac{\alpha-5-\gamma'}{2}}e^{-v}K(t,v,w)\ud v\,,
\end{align*}
where
\begin{align*}
    K(t,v,w)\le& \int_{|z|\le t}\int_0^1\ud \lambda\int_0^1\ud \mu\lr{e^{-\frac{|z-\lr{\lambda-\mu}w|^2}{8v}}+e^{-\frac{|z+w|^2}{4v}}+e^{-\frac{|z-w|^2}{4v}}+2e^{-\frac{|z|^2}{4v}}}\frac{1}{|z|}\ud z\,.
\end{align*}
We can bound the exponential functions by $1$ to get
\begin{align*}
    K(t,v,w)\le Ct^2\,.
\end{align*}
As a result,
\begin{align*}
    \int_{|z|\le t}&\frac{\left|f(z+w)+f(z-w)-2f(z)\right|}{|z|}\ud z\le C|w|^{\gamma'}t^{2}\int_0^\infty v^{\frac{\alpha-5-\gamma'}{2}}e^{-v}\ud v\le C|w|^{\gamma'}t^{2}\,,
\end{align*}
for any $0<\gamma'<\min\lr{\alpha-3,2}$. This completes the proof.
\end{proof}

\appendix
\section{Auxiliary Lemmas} 

The following lemma, taken from~\cite[Lemma 2.6]{dalang.sanz-sole:09:holder-sobolev}, allows us to verify that the Riesz kernel satisfies Assumption~\ref{A:CovAssumption}.
\begin{lemma}\label{L:RieszHolder}
    Let $f(u)=|u|^{-\beta}$ be the Riesz kernel with $0<\beta<3$. For all positive parameters $a,b$ such that $a+b=3-\beta$, the following statements hold.
\begin{enumerate}
    \item For all $u,\xi\in\mathbb{R}^3$ and $h\in\mathbb{R}$,
    \begin{align}\label{E_:RieszDiff}
        f(u+h\xi)-f(u)
        =  |h| ^b \int_{\mathbb{R}^3} |u-hw|^{a-3}
        \bigl(|w+\xi|^{\,b-3}-|w|^{\,b-3}\bigr)\,\mathrm{d}w .
    \end{align}

    \item If, in addition, $|\xi|=1$, then
    \begin{align}\label{E_:RieszDiff_2}
        \bigl|f(u+h\xi)+f(u-h\xi)-2f(u)\bigr|
        \le  |h| ^b \int_{\mathbb{R}^3}& |u-hw|^{a-3}\\
        &\notag \times \bigl||w+\xi|^{\,b-3}+|w-\xi|^{\,b-3}-2|w|^{\,b-3}\bigr|\,\mathrm{d}w .
    \end{align}
\end{enumerate}
\end{lemma}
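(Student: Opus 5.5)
This is Lemma 2.6 of Dalang--Sanz-Solé, so the proof rests on Riesz composition: convolution of two Riesz potentials is again a Riesz potential. The plan is to apply this to $|\cdot|^{-\beta}$, then rescale and recentre.

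\emph{Step 1: factor $f$.} For $a,b>0$ with $a+b<3$, the identity
\begin{equation*}
\int_{\R^3}|u-y|^{a-3}|y|^{b-3}\,\ud y=c_{a,b}\,|u|^{a+b-3}
\end{equation*}
holds with an explicit constant $c_{a,b}>0$. One can check it through the Fourier transform, using $\widehat{|\cdot|^{s-3}}=C_s|\cdot|^{-s}$ for $0<s<3$. Alternatively, homogeneity and rotation invariance show the left side equals $c\,|u|^{a+b-3}$, and local integrability together with the decay at infinity (since $a+b-6<-3$) shows $c$ is finite. Taking $a+b=3-\beta$ gives $f(u)=|u|^{-\beta}=c_{a,b}^{-1}\int|u-y|^{a-3}|y|^{b-3}\,\ud y$. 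The constant $c_{a,b}^{-1}$ should either be absorbed into the normalisation (the statement is used only up to constants in the Riesz example) or kept explicitly. I would record the identity with the constant and note that the paper's uses of the lemma only need it up to a multiplicative constant.

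\emph{Step 2: shift and rescale.} Write $f(u+h\xi)=c^{-1}\int|u+h\xi-y|^{a-3}|y|^{b-3}\,\ud y$ and substitute $y=h(\xi-w)$ (for $h\neq0$). Then $u+h\xi-y=u+hw$ and $\ud y=|h|^3\,\ud w$, so
\begin{equation*}
f(u+h\xi)=c^{-1}|h|^{b}\int|u+hw|^{a-3}|w-\xi|^{b-3}\,\ud w .
\end{equation*}
Replacing $w$ by $-w$ turns this into $|h|^{b}\int|u-hw|^{a-3}|w+\xi|^{b-3}\,\ud w$. The same substitution with $\xi$ set to $0$ gives $f(u)=c^{-1}|h|^{b}\int|u-hw|^{a-3}|w|^{b-3}\,\ud w$. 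Subtracting yields part (1). The case $h=0$ is trivial.

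\emph{Step 3: part (2).} Apply the representation to $f(u+h\xi)$, to $f(u-h\xi)$ (that is, $\xi\mapsto-\xi$), and to $f(u)$. Combining them puts the integrand $|u-hw|^{a-3}\bigl(|w+\xi|^{b-3}+|w-\xi|^{b-3}-2|w|^{b-3}\bigr)$ under a single integral. Moving the absolute value inside gives the inequality.

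\emph{Main obstacle.} The delicate step is justifying that the subtraction is legitimate. In part (1) each integral is separately finite whenever $u\neq0$ and $u\neq -h\xi$, because the only singularities are at $w=u/h$, $w=-\xi$ and $w=0$, and all exponents there exceed $-3$. At infinity each integrand decays like $|w|^{a+b-6}$, which is integrable. So for part (1) the subtraction is fine pointwise off a null set. For part (2) the combined second-difference integrand decays like $|w|^{b-5}$ at infinity, so absolute convergence of the combined integral holds as well. Both parts are then valid for almost every $u$, and for every $u$ wherever the left-hand side is finite; this suffices for the later integrations in $u$.
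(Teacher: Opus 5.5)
Your proof is correct. The paper gives no proof of its own: it quotes the lemma from Dalang--Sanz-Sol\'e (Lemma 2.6), whose argument is the same as yours, namely the Riesz composition identity followed by the substitution $y=h(\xi-w)$ and $w\mapsto -w$. You are also right to flag the normalising constant: as displayed in the paper, identity (1) is only true with the factor $c_{a,b}^{-1}$ in front (for example, $a=b=\beta=1$ gives $c_{a,b}=\pi^3$). This is harmless, since the lemma is only used for upper bounds up to constants in the Riesz-kernel example.
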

The following two lemmas establish upper bounds on the integrals of the Riesz kernel with respect to two different measures.
\begin{lemma}\label{L:RearrangementIneq}
For all $r\ge 0$ and $\alpha\in (-2,0)$, it holds for all $v\in \R^3$ that
    \begin{align*}
        \int_{|u|\le r}\frac{|u+v|^{\alpha}}{|u|}\ud u\le \int_{|u|\le r}|u|^{\alpha-1}\ud u \,.
    \end{align*}
\end{lemma}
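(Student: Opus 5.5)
The approach is a rearrangement argument. Write $g(u)=|u|^{\alpha}$ with $\alpha\in(-2,0)$; it is radially symmetric and strictly decreasing in $|u|$. Write $k(u)=|u|^{-1}\mathds{1}_{\{|u|\le r\}}$; it is also radially symmetric and nonincreasing. The left-hand side equals $\int_{\R^3} k(u)\,g(u+v)\ud u$, which is the $L^2$ pairing of $k$ with the translate $g(\cdot+v)$. The symmetric decreasing rearrangement of $g(\cdot+v)$ is $g$ itself, and $k$ coincides with its own rearrangement. The Hardy--Littlewood rearrangement inequality $\int FG\le\int F^\ast G^\ast$ then gives
\[
\int_{|u|\le r}\frac{|u+v|^{\alpha}}{|u|}\ud u\le \int_{|u|\le r}\frac{|u|^{\alpha}}{|u|}\ud u=\int_{|u|\le r}|u|^{\alpha-1}\ud u .
\]
The right-hand side is finite because $\alpha-1>-3$. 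This finiteness is the only place the restriction $\alpha>-2$ is used; the restriction $\alpha<0$ makes $g$ decreasing.

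The one technical point is that $g$ is not integrable at infinity, so it is not an admissible function for the rearrangement inequality as usually stated (functions vanishing at infinity, or finite-measure level sets). I will handle this with the layer-cake representation, which is also an elementary self-contained proof. Write
\[
g(u+v)=\int_0^\infty \mathds{1}_{\{|u+v|^{\alpha}>\lambda\}}\ud\lambda=\int_0^\infty \mathds{1}_{B(-v,\lambda^{1/\alpha})}(u)\ud\lambda ,
\]
and similarly write $k(u)=\int_0^\infty \mathds{1}_{B(0,\rho(s))}(u)\ud s$ with $\rho(s)=\min(r,1/s)$. Tonelli then reduces the claim to the elementary inequality
\[
|B(-v,a)\cap B(0,b)|\le |B(0,a)\cap B(0,b)| = |B(0,\min(a,b))| ,
\]
valid for all radii $a,b>0$. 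This holds because the intersection is contained in both balls, so its volume is at most that of the smaller ball. Integrating over $\lambda$ and $s$ yields exactly the pairing of the centred functions, which is the right-hand side. All integrands are nonnegative, so Tonelli applies even when some intermediate integrals are infinite.

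I expect no real obstacle. The only care needed is in the layer-cake bookkeeping, namely checking that the radius $\lambda^{1/\alpha}$ runs over $(0,\infty)$ correctly since $\alpha<0$, and that every integrand stays nonnegative so Tonelli is justified.
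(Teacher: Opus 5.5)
Your proof is correct, and it takes a genuinely different route from the paper. The paper uses Lemma~\ref{L:KernelConv} to write the weight as a self-convolution, $|u|^{-1}\mathds{1}_{\{|u|\le r\}}\ud u=8\pi\,(G_{r/2}\ast G_{r/2})(\ud u)$, and then argues in Fourier space. There $|u|^{\alpha}$ has Fourier transform $C_\alpha|\xi|^{-\alpha-3}$ with $C_\alpha>0$, and $G_{r/2}\ast G_{r/2}$ has transform $|\widehat G_{r/2}|^2\ge 0$. The shift by $v$ therefore only inserts a unimodular factor $e^{iv\cdot\xi}$ into an integral with nonnegative integrand, and so can only decrease it. The Parseval step for the non-integrable kernel is taken from Foondun--Khoshnevisan. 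In other words, the paper uses \emph{positive-definiteness} of $|u|^\alpha$ together with the special $\mu\ast\tilde\mu$ structure of the weight. You use only \emph{radial monotonicity} of both factors, reduced via layer cakes to $|B(-v,a)\cap B(0,b)|\le|B(0,\min(a,b))|$. Your argument is elementary and self-contained: no convolution identity, no Fourier transforms, no distributional Parseval. It gives the inequality for any two nonnegative radially nonincreasing functions, in particular for every $\alpha<0$, with $\alpha>-2$ needed only for finiteness. It would also give Lemma~\ref{L:Rearrangement_2} immediately. The paper's method instead covers arbitrary nonnegative-definite kernels that need not be radially decreasing, but only against weights of the form $\mu\ast\tilde\mu$. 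One small correction: your worry about admissibility is unnecessary. The function $|u|^\alpha$ with $\alpha<0$ does vanish at infinity in the rearrangement sense, since every superlevel set $\{|u|^\alpha>\lambda\}$ with $\lambda>0$ is a bounded ball. So the Hardy--Littlewood inequality applies directly without integrability; your layer-cake proof is fine either way.
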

\begin{proof}
    By Lemma~\ref{L:KernelConv}, we have
    \begin{align*}
         \int_{|u|\le r}\frac{|u+v|^{\alpha}}{|u|}\ud u=&8\pi \int _{\R^3} |u+v|^{\alpha} (G_{r/2}\ast G_{r/2})(\ud u)\,.
    \end{align*}
    Let $\widehat{G}_{r/2}(\xi)$ be the Fourier transform of the probability measure $G_{r/2}(\ud u)$, that is,
    \begin{align*}
        \widehat{G}_{r/2}(\xi)=\int_{\R^3}e^{-i\xi \cdot u}G_{r/2}(\ud u)\,.
    \end{align*}
    We apply~\cite[Corollary 3.4]{foondun.khoshnevisan:13:on} with $\mu(\ud u)=G_{r/2}(\ud u)$ to get
    \begin{align*}
        &8\pi \int _{\R^3} |u+v|^{\alpha} (G_{r/2}\ast G_{r/2})(\ud u)\\
        =& \frac{C_\alpha}{\pi^2} \int_{\R^3}\left|\widehat{G}_{r/2}(\xi )\right|^2|\xi|^{-\alpha-3} e^{i v \cdot \xi} \ud \xi
        \le \frac{C_\alpha}{\pi^2} \int_{\R^3}\left|\widehat{G}_{r/2}(\xi )\right|^2|\xi|^{-\alpha-3}  \ud \xi\\
        = &8\pi \int _{\R^3} |u|^{\alpha} (G_{r/2}\ast G_{r/2})(\ud u)\\
        =&\int_{|u|\le r}|u|^{\alpha-1}\ud u\,,
    \end{align*}
    where $C_\alpha|\xi|^{-\alpha-3}$ is the Fourier transform of $f(u)=|u|^{\alpha}$. The proof is thus completed.
\end{proof}
\begin{lemma}\label{L:Rearrangement_2}
Let $\alpha\in(-3,0)$ and $\beta>0$. Then there exists a constant
$C=C(\alpha,\beta)>0$ such that
    \begin{align*}
    \sup_{y\in \R^3}\int_{\R^3}{\left|x-y\right|^{\alpha}}e^{-\frac{|x|^2}{\beta}}\ud x\le C\,.
    \end{align*}
In particular, the integral is finite and uniformly bounded with respect to the spatial shift $y\in\R^3$.
\end{lemma}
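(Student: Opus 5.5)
The plan is to reduce to a one-dimensional radial integral and split according to whether $|x-y|$ is small or large compared with $|y|$. By translation invariance, the integral in Lemma~\ref{L:Rearrangement_2} equals
\[
I(y)=\int_{\R^3}|x|^{\alpha}e^{-\frac{|x+y|^2}{\beta}}\,\ud x .
\]
I would split $\R^3$ into the region $\{|x|\le 1\}$ and its complement $\{|x|>1\}$ and bound each piece uniformly in $y$.

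On $\{|x|\le 1\}$, bound the Gaussian factor by $1$. This leaves
\[
\int_{|x|\le 1}|x|^{\alpha}\,\ud x=4\pi\int_0^1 r^{\alpha+2}\,\ud r=\frac{4\pi}{\alpha+3},
\]
which is finite because $\alpha>-3$, and it clearly does not depend on $y$.

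On $\{|x|>1\}$, use $\alpha<0$ to bound $|x|^{\alpha}\le 1$. What remains is at most
\[
\int_{\R^3}e^{-\frac{|x+y|^2}{\beta}}\,\ud x=(\pi\beta)^{3/2},
\]
again independent of $y$ by translation invariance of Lebesgue measure.

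Adding the two pieces gives the statement with
\[
C(\alpha,\beta)=\frac{4\pi}{\alpha+3}+(\pi\beta)^{3/2}.
\]
There is no genuine obstacle here. The only points to check are that local integrability of $|x|^\alpha$ in three dimensions requires exactly $\alpha>-3$, and that the condition $\alpha<0$ is what allows the crude bound $|x|^\alpha\le 1$ away from the origin. As an alternative, one could invoke the Hardy--Littlewood rearrangement inequality: both functions are symmetric decreasing, so the supremum over shifts is attained at $y=0$. The elementary splitting above avoids needing that inequality.
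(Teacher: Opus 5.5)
Your proof is correct, and it follows a different route from the paper's.

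The paper argues via Fourier analysis. It uses that the Fourier transform of $|x|^{\alpha}$ is $C_\alpha|\xi|^{-\alpha-3}$ with $C_\alpha>0$ for $\alpha\in(-3,0)$, and that the Gaussian has a positive Fourier transform. With these, it rewrites the integral as $\int_{\R^3}|\xi|^{-\alpha-3}\,c_\beta e^{-\beta|\xi|^2/4}e^{-iy\cdot\xi}\,\ud\xi$, bounds $|e^{-iy\cdot\xi}|\le 1$, and transforms back. This gives the sharper monotonicity statement that the integral at any shift $y$ is at most its value at $y=0$. That argument parallels Lemma~\ref{L:RearrangementIneq}, and it is the same conclusion your Hardy--Littlewood alternative would give.

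Your splitting at $|x|=1$ (after translating) uses only two facts plus translation invariance of the Gaussian integral:
\begin{itemize}
\item local integrability of $|x|^{\alpha}$ near the origin, which needs $\alpha>-3$;
\item the crude bound $|x|^{\alpha}\le 1$ away from it, which needs $\alpha<0$.
\end{itemize}
This yields the explicit constant $4\pi/(\alpha+3)+(\pi\beta)^{3/2}$.

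Your version is fully elementary. It needs no justification of a Parseval identity for the function $|x|^{\alpha}$, which is neither integrable nor square-integrable and so requires a tempered-distribution argument. It also needs no knowledge of the sign of $C_\alpha$.

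The paper's version gives the sharp comparison with the unshifted integral. That sharpness is not used where the lemma is applied, namely bounding $I_2(t,w,v)$ uniformly in $w$ and $v$ in the Bessel kernel example. Your uniform bound therefore suffices for the paper's purposes.
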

\begin{proof}
By Plancherel's theorem, for $\alpha\in (-3,0)$, we have
    \begin{align*}
    \int_{\R^3}{\left|x-y\right|^{\alpha}}e^{-\frac{|x|^2}{\beta}}\ud x=&\frac{C_{\alpha}}{(2\pi)^3}\int_{\R^3}|\xi|^{-\alpha-3}\lr{\frac{\beta}{2}}^{\frac{3}{2}}e^{-\frac{\beta |\xi|^2}{4}}e^{-iy\cdot \xi}\ud \xi\\
    \le &\frac{C_{\alpha}}{(2\pi)^3}\int_{\R^3}|\xi|^{-\alpha-3}\lr{\frac{\beta}{2}}^{\frac{3}{2}}e^{-\frac{\beta |\xi|^2}{4}}\ud \xi\\
    =&\int_{\R^3}{\left|x\right|^{\alpha}}e^{-\frac{|x|^2}{\beta}}\ud x\le C(\alpha,\beta)\,,
    \end{align*}
    which completes the proof.
\end{proof}

\section*{Acknowledgements}
The authors would like to thank Mickey Salins for helpful discussions. J. H. would like to thank Carl Mueller for pointing out references \cite{peszat.zabczyk:00:nonlinear, peszat:02:cauchy}.

\end{document}